\UseRawInputEncoding
\documentclass[11pt]{amsart}
\usepackage{amsfonts}
\usepackage{}
\usepackage{mathrsfs}
\usepackage{bbm}
\usepackage{amssymb}
\usepackage{indentfirst}
\usepackage{latexsym,amsfonts,amssymb,amsmath}
\pdfoutput=1
\setcounter{page}{1} \setlength{\textwidth}{14.6cm}
\setlength{\textheight}{22.5cm} \setlength{\evensidemargin}{0.8cm}
\setlength{\oddsidemargin}{0.8cm} \setlength{\topmargin}{0.8cm}

\newtheorem{theorem}{Theorem}[section]
\newtheorem{lemma}[theorem]{Lemma}
\newtheorem{corollary}[theorem]{Corollary}
\newtheorem{question}[theorem]{Question}
\theoremstyle{definition}
\newtheorem{definition}[theorem]{Definition}
\newtheorem{proposition}[theorem]{Proposition}
\theoremstyle{remark}

\begin{document}

\title
{On the continuity of the inverse in (strongly) paratopological gyrogroups}
\author{Ying-Ying Jin}\thanks{}
\address{(Y.-Y. Jin) Department of General Required Courses, Guangzhou Panyu Polytechnic, Guangzhou 511483, P.R. China} \email{yingyjin@163.com, jinyy@gzpyp.edu.cn}

\author{Li-Hong Xie*}\thanks{* The corresponding author.}
\address{(L.-H. Xie) School of Mathematics and Computational Science, Wuyi University, Jiangmen 529020, P.R. China} \email{yunli198282@126.com}

\thanks{
This work is supported by the Natural Science Foundation of Guangdong
Province under Grant (Nos. 2021A1515010381; 2020A1515110458). The Innovation Project of Department of Education of Guangdong Province (No. 2022KTSCX145), and the Natural Science Project of Jiangmen City (No:2021030102570004880).}

\subjclass[2010]{primary 54H99; secondary 54D30, 54D45, 54D20, 	54B15, 54C10}

\keywords{Paratopological gyrogroup; Topological gyrogroup; Compact; Locally compact; Feebly compact; Pseudocompact}

\begin{abstract} In this paper, we consider the continuity of the inverse in (strongly) paratopological gyrogroups. The conclusions are established as follows:
(1) A compact Hausdorff paratopological gyrogroup $G$ is a topological gyrogroup. (2) A Hausdorff locally compact strongly paratopological gyrogroup is a topological gyrogroup.
(3) If $G$ is locally compact strongly paratopological gyrocommutative gyrogroup (without any separation restrictions), then
$G$ is a strongly topological gyrogroup.
(4) Every regular feebly compact strongly paratopological gyrogroup is a topological gyrogroup.
(5) If a Hausdorff strongly paratopological gyrogroup $G$ is countablly compact and topologically periodic,
then $G$ is a strongly topological gyrogroup.
\end{abstract}

\maketitle

\section{Introduction}
Finding a natural compactness-type condition on a topological semigroup (or paratopological group) that appear to suggest it is a topological group has many precedents in the literature.
According to Ellis' theorem in \cite{El}, every locally compact Hausdorff semitopological group is a topological group.
Romaguera and Sanchis \cite{RS} generalized the famous Numakura¡¯s theorem \cite{Nu} and showed that
every compact Hausdorff topological semigroup with two-sided cancellation is a topological group.
A conclusion drawn from this result in \cite{RS} is that every compact $T_0$ paratopological group is a topological group.
It turns out that in the latter situation, the $T_0$ constraint can be dropped.
Ravsky \cite{Ra1} proved that a compact paratopological group is a topological group.
Ellis \cite{El1}, Grant \cite{Gr}, Brand \cite{Br}, Bouziad \cite{Bo}, Bokalo and Guran \cite{Bok},
Romaguera and Sanchis \cite{RS}, Kenderov et al.\cite{Ke}, and others have all generalized the latter fact.
Reznichenko investigated automatic continuity in paratopological groups in \cite{Rez}, proving that every completely regular pseu-docompact paratopological group $G$ is a topological group, i.e., the inversion in G is continuous.
This result was extended to regular pseudocompact paratopological groups by Arhangelski\v{\i} and Reznichenko in \cite{AR}, \cite{Arha}.

Ungar discovered the concept of gyrogroups and discussed some properties of them in 2008 when he studied the $c$-ball of relativistically admissible velocities with the Einstein velocity addition in \cite{Ung}.
The Einstein velocity addition $\oplus_E$ in the $c$-ball is given
by the equation
$$\mathbf{u}\oplus_E\mathbf{v}=\frac{1}{1+\frac{<\mathbf{u},\mathbf{v}>}{c^2}}\{\mathbf{u}+\frac{1}{\gamma_{\mathbf{u}}}\mathbf{v}+
\frac{1}{c^2}\frac{\gamma_{\mathbf{u}}}{1+\gamma_{\mathbf{u}}}<\mathbf{u}, \mathbf{v}>\mathbf{u}\},$$
where $\mathbf{u}, \mathbf{v}\in\mathbb{R}_{c}^3=\{\mathbf{v}\in\mathbb{R}^3:\|\mathbf{v}\|<c\}$ and $\gamma_{\mathbf{u}}$ is the Lorentz factor given by
$$\gamma_{\mathbf{u}}=\frac{1}{\sqrt{1-\frac{\|\mathbf{u}\|^2}{c^2}}}.$$
The operator $\oplus_E$ doesn't satisfy associative or commutative law, so $(\mathbb{R}_{c}^3, \oplus_E)$ is not a group. The associative law is redefined
by more general definitions which are the left gyroassociative law and
the left loop property.
A gyrogroup, broadly defined, is a group-like structure where the associative law does not hold (see Definition \ref{Def:gyr}).
Atiponrat \cite{Atip} recently developed the idea of topological gyrogroups as a generalization of topological groups.
A paratopological gyrogroup is a gyrogroup with a topology such that its binary operation is
jointly continuous. If $G$ is a paratopological gyrogroup and the inverse operation of $G$ is continuous, then $G$ is
a topological gyrogroup.
Specially, Atiponrat \cite{Atip} discovered that for a topological gyrogroup, $T_0$ and $T_3$ are equivalent. It is worth noting that Cai, Lin and He in \cite{Cai} proved that every Hausdorff first countable topological gyrogroup is metrizable.
Atiponrat and Maungchang \cite{Atip1} studied some separation axioms of paratopological gyrogroups. In \cite{JX1}, Jin and Xie proved that every regular (Hausdorff) locally gyroscopic invariant paratopological gyrogroup $G$ is completely regular (function Hausdorff), and extended the Pontrjagin conditions of (para)topological groups to (para)topological gyrogroups.

As a generalization of paratopological groups, it is natural to consider the conditions for a partopological gyrogroup
to turn out to be a topological gyrogroup.
In this paper, we try to study whether a paratopological
gyrogroup satisfying a natural compactness-type condition and a separation axiom turns out to be a topological gyrogroup.
The paper is organized as follows: In Section 2, we mainly introduce the related concepts and conclusions which are required in this article.
In Section 3, we study the continuity of the inverse in (strongly) paratopological gyrogroups. The following results are established.
(1) A compact Hausdorff paratopological gyrogroup $G$ is a topological gyrogroup (see Theorem \ref{the3.2}).
(2) If $G$ is a Hausdorff
locally compact strongly paratopological gyrogroup, then $G$ is a strongly topological gyrogroup(see Theorem \ref{the3.6}).
(3) Let $G$ be a strongly paratopological gyrogroup,
and $H$ be an invariant subgyrogroup of $G$.
If $H$ and $G/H$ are strongly topological gyrogroups, then so is $G$(see Theorem \ref{the3.12}).
(4) If $G$ is locally compact strongly paratopological gyrocommutative gyrogroup (without any separation restrictions), then
$G$ is a strongly topological gyrogroup(see Theorem \ref{the3.17}).
In Section 4, we consider feebly compact paratopological gyrogroups.
The following results are established.
(1) If a strongly paratopological gyrogroup $G$ is a dense $G_\delta$-set in a regular feebly compact space $X$,
then $G$ is a strongly topological gyrogroup(see Theorem \ref{the3.8}).
(2) If a strongly paratopological gyrogroup $G$ is Hausdorff countable compact and topologically periodic,
then $G$ is a strongly topological gyrogroup(see Theorem \ref{the4.2}).

No separation restrictions on the topological spaces considered in this paper are imposed unless we mention them explicitly.

\section{Definitions and preliminaries}
\begin{definition}\cite{Ung}\label{Def:gyr}
 Let $(G, \oplus)$ be a nonempty groupoid. We say that $(G, \oplus)$ or just $G$
(when it is clear from the context) is a gyrogroup if the followings hold:
\begin{enumerate}
\item[($G1$)] There is an identity element $0 \in G$ such that
$$0\oplus x=x=x\oplus 0\text{~~~~~for all~~}x\in G.$$
\item[($G2$)] For each $x \in G $, there exists an {\it inverse element}  $\ominus x \in G$ such that
$$\ominus x\oplus x=0=x\oplus(\ominus x).$$
\item[($G3$)] For any $x, y \in G $, there exists an {\it gyroautomorphism} $\text{gyr}[x, y] \in Aut(G,  \oplus)$ such that
$$x\oplus (y\oplus z)=(x\oplus y)\oplus \text{gyr}[x, y](z)$$ for all $z \in G$;
\item[($G4$)] For any $x, y \in G$, $\text{gyr}[x \oplus y, y]=\text{gyr}[x, y]$.
\end{enumerate}
\end{definition}
For a gyrogroup $G$ and $x_1, x_2, \cdot\cdot\cdot, x_k\in G$, the formula $(((x_1\oplus x_2)\oplus x_3)\oplus \cdot\cdot\cdot\oplus x_{k-1})\oplus x_k$ will be denoted by $x_1\oplus x_2\oplus \cdot\cdot\cdot\oplus x_k$.

\begin{definition}\cite{Ung}\label{com}
A gyrogroup $(G,\oplus)$ is gyrocommutative if its binary operation obeys the gyrocommutative law
$$a\oplus b=\text{gyr}[a,b](b\oplus a)$$
for all $a, b\in G$.
\end{definition}

\begin{definition}\cite{Ung}\label{defbox}
Let $(G,\oplus)$ be a gyrogroup with gyrogroup operation (or,
addition) $\oplus$. The gyrogroup cooperation (or, coaddition) $\boxplus$ is a second
binary operation in $G$ given by the equation
$$(\divideontimes)~~~~a\boxplus b=a\oplus \text{gyr}[a,\ominus b]b$$ for all $a, b\in G$.
The groupoid $(G, \boxplus)$ is called a cogyrogroup, and is said to
be the cogyrogroup associated with the gyrogroup $(G, \oplus)$.

Replacing $b$ by $\ominus b$ in $(\divideontimes)$, along with $(\divideontimes)$ we have the identity
$$a\boxminus b=a\ominus \text{gyr}[a,b]b$$ for all $a, b\in G$, where we use the obvious notation, $a\boxminus b = a\boxplus(\ominus b)$.
\end{definition}

\begin{definition}\cite{Suk3}
Let $(G, \oplus)$ be a gyrogroup. A nonempty subset $H$ of $G$ is called a subgyrogroup, denoted
by $H\leq G$, if the following statements hold:
\begin{enumerate}
\item[(1)] The restriction $\oplus|_{H\times H}$ is a binary operation on $H$, i.e. $(H, \oplus|_{H\times H})$ is a groupoid;
\item[(2)] For any $x, y\in H$, the restriction of $\text{gyr}[x, y]$ to $H$, $\text{gyr}[x, y]|_H: H \rightarrow\text{gyr}[x, y](H)$, is a bijective
homomorphism; and
\item[(3)] $(H, \oplus|_{H\times H})$ is a gyrogroup.
\end{enumerate}
\end{definition}

Furthermore, a subgyrogroup $H$ of $G$ is said to be an {\it $L$-subgyrogroup} \cite{Suk3}, denoted by $H\leq_L G$,
if $\text{gyr}[a, h](H) =H$ for all $a\in G$ and $h\in H$.

A {\it semigroup} is a non-void set $S$ together
with a mapping $(x, y)\rightarrow xy$ of $S\times S$ to $S$ such that $x(yz)=(xy)z$ for all $x, y, z$ in $S$.

\begin{proposition}\cite{ST}
A nonempty subset $H$ of $G$ is a
subgyrogroup if and only if $a\in H$ implies $\ominus a\in H$ and $a, b\in H$ implies $a\oplus b\in H$.
\end{proposition}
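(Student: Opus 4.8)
The plan is to prove the two implications separately; the forward one is essentially bookkeeping with the elementary gyrogroup identities of \cite{Ung}, and the reverse one is where the subgyrogroup axioms must be verified with a little care. For the forward direction, assume $H\leq G$. Clause $(1)$ of the definition of subgyrogroup says that $\oplus|_{H\times H}$ is a binary operation on $H$, which is exactly the statement $a,b\in H\Rightarrow a\oplus b\in H$. Clause $(3)$ says that $(H,\oplus|_{H\times H})$ is itself a gyrogroup; let $0_H$ be its identity and, for $a\in H$, let $a'\in H$ be its inverse in $H$. First I would note that $0_H=0$: the relation $0_H\oplus 0_H=0_H$ holds in $G$, and in any gyrogroup $w\oplus w=w$ forces $w=0$ (left-translate by $\ominus w$ and use the left cancellation law together with $\text{gyr}[\ominus w,w]=I$, the latter following from $(G2)$, $(G4)$ and $\text{gyr}[0,w]=I$). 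Then $a'\oplus a=0_H=0$, so $a'$ is a left inverse of $a$ in $G$; since left inverses are unique in a gyrogroup, $a'=\ominus a\in H$, as required.

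For the reverse direction, assume $H\neq\emptyset$ is closed under $\oplus$ and under $\ominus$. Choosing any $a\in H$ gives $\ominus a\in H$, hence $0=a\oplus(\ominus a)\in H$. The crucial point is to control the gyroautomorphisms $\text{gyr}[x,y]$ for $x,y\in H$. For every $w\in G$ the left translation $L_w\colon z\mapsto w\oplus z$ is a bijection of $G$ with inverse $L_{\ominus w}$ (the left cancellation law), and $(G3)$ can be rewritten as the factorization
\[
\text{gyr}[x,y]=L_{\ominus(x\oplus y)}\circ L_x\circ L_y .
\]
Moreover, for every $w\in H$ one has $L_w(H)=H$: the inclusion $L_w(H)\subseteq H$ is closure under $\oplus$, while each $k\in H$ equals $w\oplus(\ominus w\oplus k)=L_w(\ominus w\oplus k)$ with $\ominus w\oplus k\in H$. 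Applying this with $w=y$, $w=x$ and $w=\ominus(x\oplus y)$, all of which lie in $H$, and composing, we obtain $\text{gyr}[x,y](H)=H$. Since $\text{gyr}[x,y]\in Aut(G,\oplus)$, its restriction $\text{gyr}[x,y]|_H$ is therefore a bijective homomorphism of $H$ onto $\text{gyr}[x,y](H)=H$; this verifies clause $(2)$ of the definition and simultaneously shows $\text{gyr}[x,y]|_H\in Aut(H,\oplus|_{H\times H})$.

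It remains to check $(G1)$--$(G4)$ for $(H,\oplus|_{H\times H})$. Axiom $(G1)$ holds with identity $0\in H$, and $(G2)$ holds because $x\in H\Rightarrow\ominus x\in H$. Axioms $(G3)$ and $(G4)$ are the restrictions to $H$ of the corresponding identities of $G$: every element appearing in them lies in $H$ --- for $\text{gyr}[x,y](z)$ with $x,y,z\in H$ one may either invoke the previous paragraph or rewrite $(G3)$ as $\text{gyr}[x,y](z)=\ominus(x\oplus y)\oplus(x\oplus(y\oplus z))$ --- and the gyroautomorphism they require is the $\text{gyr}[x,y]|_H$ already produced. Hence $H$ is a subgyrogroup. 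I expect the only step that is not purely formal to be the surjectivity $\text{gyr}[x,y](H)=H$: that $\text{gyr}[x,y]$ maps $H$ into $H$ is immediate from the rearranged $(G3)$, but that it maps $H$ \emph{onto} $H$ is not, and the factorization of $\text{gyr}[x,y]$ into left translations is precisely what supplies it.
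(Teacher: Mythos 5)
Your proof is correct. Note that the paper offers no argument of its own for this proposition --- it is quoted from \cite{ST} --- so there is no in-paper proof to match against; judged on its own merits, both of your directions are sound. In the forward direction, identifying $0_H=0$ via left cancellation and invoking uniqueness of left inverses is exactly what is needed to get closure under $\ominus$. In the reverse direction you correctly isolate the one nontrivial point, namely that $\text{gyr}[x,y](H)=H$ for $x,y\in H$, and your route to it --- the factorization $\text{gyr}[x,y]=L_{\ominus(x\oplus y)}\circ L_x\circ L_y$ from Theorem \ref{the1.3}(6) (the same factorization the paper uses in Proposition \ref{pro23s}), combined with the observation that $L_w$ restricts to a bijection of $H$ for every $w\in H$ --- differs slightly from the usual argument in \cite{ST}: there one gets $\text{gyr}[x,y](H)\subseteq H$ directly from the gyrator identity $\text{gyr}[x,y](z)=\ominus(x\oplus y)\oplus(x\oplus(y\oplus z))$ and obtains surjectivity by applying the same inclusion to $\text{gyr}[y,x]=\text{gyr}[x,y]^{-1}$ (Theorem \ref{the1.3}(9)), rather than through left translations. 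The two arguments have essentially the same depth; yours makes the ontoness mechanism explicit via translations, the standard one is a line shorter because it exploits inversive symmetry. With $\text{gyr}[x,y](H)=H$ in hand, your verification of clauses (1)--(3) of the definition of subgyrogroup, including (G1)--(G4) for $(H,\oplus|_{H\times H})$, goes through as written.
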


\begin{proposition}\cite{ST}
 A nonempty subset $X$ of a gyrogroup $G$ is a subgroup if and only
if it is a subgyrogroup of $G$ and the restriction of $\text{gyr}[a, b]$ to $X$ equals the identity
map on $X$ for all $a, b\in X$.
\end{proposition}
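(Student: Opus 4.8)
The plan is to prove both implications by one-line computations from the left gyroassociative law $(G3)$, using two standard facts: the \emph{left cancellation law} $\ominus a\oplus(a\oplus b)=b$ (equivalently, $a\oplus b=a\oplus c$ implies $b=c$), which holds in every gyrogroup \cite{Ung}, and the immediately preceding proposition of \cite{ST}, by which a nonempty subset of $G$ that is closed under $\oplus$ and under $\ominus(\cdot)$ is automatically a subgyrogroup. Throughout I would also use that the two-sided inverse in a gyrogroup is unique, so that the identity and the inverses ``computed inside $X$'' coincide with those of the ambient gyrogroup $G$.

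For the ``if'' part, assume $X$ is a subgyrogroup of $G$ with $\text{gyr}[a,b]|_X=\mathrm{id}_X$ for all $a,b\in X$. By the preceding proposition $X$ is closed under $\oplus$ and $\ominus$; picking $x\in X$ gives $0=\ominus x\oplus x\in X$, so $(X,\oplus|_{X\times X})$ is a groupoid with identity $0$ in which every element has an inverse. It remains to check associativity: for $a,b,c\in X$, $(G3)$ gives $a\oplus(b\oplus c)=(a\oplus b)\oplus\text{gyr}[a,b](c)$, and since $c\in X$ the hypothesis replaces $\text{gyr}[a,b](c)$ by $c$, whence $a\oplus(b\oplus c)=(a\oplus b)\oplus c$. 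Thus $(X,\oplus|_{X\times X})$ is a group, i.e. a subgroup of $G$.

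For the ``only if'' part, assume $X$ is a subgroup of $G$; in particular $\oplus|_{X\times X}$ is associative and $X$ is closed under $\oplus$, and, by uniqueness of inverses, under $\ominus$, so by the preceding proposition $X$ is a subgyrogroup. To obtain the gyroautomorphism condition, fix $a,b\in X$ and an arbitrary $c\in X$: associativity gives $a\oplus(b\oplus c)=(a\oplus b)\oplus c$, while $(G3)$ gives $a\oplus(b\oplus c)=(a\oplus b)\oplus\text{gyr}[a,b](c)$; cancelling $a\oplus b$ on the left via the left cancellation law yields $\text{gyr}[a,b](c)=c$. Since $c\in X$ was arbitrary, $\text{gyr}[a,b]|_X=\mathrm{id}_X$. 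I do not expect a genuine obstacle here; the only point needing care is the bookkeeping that the identity, the inverses, and hence the left cancellation law of $X$ agree with those of $G$, which is exactly what uniqueness of the two-sided inverse provides and what lets one invoke the preceding proposition rather than verifying the bijective-homomorphism clause in the definition of subgyrogroup directly.
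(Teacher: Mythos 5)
Your proof is correct; note that the paper itself states this proposition without proof, citing \cite{ST}, and your argument is essentially the standard one from that source: the left gyroassociative law $(G3)$ plus left cancellation $\ominus a\oplus(a\oplus b)=b$ make ``associativity on $X$'' equivalent to ``$\text{gyr}[a,b]|_X=\mathrm{id}_X$'', with the preceding closure criterion handling the subgyrogroup clause and uniqueness of the two-sided inverse identifying the identity and inverses of $X$ with those of $G$. Nothing further is needed.
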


In this paper, $\text{gyr}[a,b](V)$ denotes $\{\text{gyr}[a,b](v): v\in V\}$.

\begin{theorem}\cite{Ung}\label{the1.3}
Let $(G, \oplus)$ be a gyrogroup. Then, for any $a, b, c\in G$ we have
\begin{enumerate}
\item[(1)] $(a\oplus b)\oplus c=a\oplus(b\oplus \text{gyr}[b, a]c);$\quad \quad\quad\quad Right Gyroassociative Law
\item[(2)] $\text{gyr}[a, b]=\text{gyr}[a, b\oplus a];$\quad \quad\quad\quad\quad\quad\quad\quad Right Loop Property
\item[(3)] $(\ominus a)\oplus(a\oplus b)= b$;
\item[(4)] $(a\ominus b)\boxplus b= a$;
\item[(5)] $(a\boxminus b)\oplus b= a$;
\item[(6)] $\text{gyr}[a, b](c)=\ominus(a\oplus b)\oplus (a\oplus (b\oplus c))$;
\item[(7)] $\ominus(a\oplus b)=\text{gyr}[a, b](\ominus b\ominus a)$;\quad \quad\quad \quad\quad\quad Gyrosum Inversion
\item[(8)] $\text{gyr}[a, b](\ominus x)=\ominus \text{gyr}[a, b]x$;
\item[(9)] $\text{gyr}^{-1}[a, b]=\text{gyr}[b, a]$; \quad \quad\quad \quad\quad\quad Inversive symmetry
\item[(10)] $\ominus(a\boxplus b)= (\ominus b)\boxplus(\ominus a)$; \quad\quad\quad The Cogyroautomorphic Inverse Theorem
\item[(11)] $\text{gyr}[\ominus a, \ominus b]=\text{gyr}[a, b]$; \quad\quad\quad Even symmetry
\item[(12)] $\text{gyr}[a, 0]=\text{gyr}[0, b]=I$.
\end{enumerate}
\end{theorem}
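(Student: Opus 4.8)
The plan is to deduce all twelve identities from the four defining axioms (G1)--(G4), using throughout that each $\text{gyr}[x,y]$ is an automorphism of $(G,\oplus)$ (in particular a bijection fixing $0$). The one genuinely delicate issue is the \emph{order} of the argument: naive attempts are circular, because (G4) rewrites $\text{gyr}[\ominus a,a]$ as $\text{gyr}[0,a]$, yet to see that $\text{gyr}[0,a]=I$ one wants to cancel a left translation, which is precisely the cancellation law one is after.

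First I would establish (12). Left gyroassociativity with $x=0$ gives $a\oplus z=a\oplus\text{gyr}[0,a]z$ for all $a,z\in G$; instead of cancelling the leftmost $a$ directly, left-translate by $\ominus a$, so that (G3) together with $\ominus a\oplus a=0$ collapses both sides to $\text{gyr}[\ominus a,a]z$ and $\text{gyr}[\ominus a,a]\bigl(\text{gyr}[0,a]z\bigr)$ respectively, and bijectivity of $\text{gyr}[\ominus a,a]$ forces $z=\text{gyr}[0,a]z$ for every $z$, i.e.\ $\text{gyr}[0,a]=I$. Feeding this into (G4) with $x=\ominus a$, $y=a$ yields $\text{gyr}[\ominus a,a]=\text{gyr}[0,a]=I$, so (G3) immediately gives the left cancellation law $\ominus a\oplus(a\oplus b)=b$, which is (3). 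This says every left translation $L_a\colon x\mapsto a\oplus x$ has a left inverse, hence is injective; applying (3) also with $a$ replaced by $\ominus a$ shows that each $L_a$ is in fact bijective with $L_a^{-1}=L_{\ominus a}$, from which inverses are unique and $\ominus(\ominus a)=a$. Cancelling in $a\oplus z=a\oplus\text{gyr}[a,0]z$ (left gyroassociativity with $y=0$) then gives $\text{gyr}[a,0]=I$, finishing (12).

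The remaining identities follow by short computations in a workable order. Item (6) comes from rewriting $\text{gyr}[a,b]z$ via (G3) and cancelling; item (8) is just that an automorphism maps $\ominus x$ to the inverse of the image of $x$; item (7) follows by checking that $(a\oplus b)\oplus\text{gyr}[a,b](\ominus b\ominus a)=a\oplus\bigl(b\oplus(\ominus b\ominus a)\bigr)=0$ and invoking uniqueness of inverses; items (4) and (5) follow by unwinding $a\boxplus b=a\oplus\text{gyr}[a,\ominus b]b$ (equivalently $a\boxminus b=a\ominus\text{gyr}[a,b]b$) and cancelling, once the right gyroassociative law is available; and item (10) follows from (7) applied to $a\boxplus b$. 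The steps I expect to be the real work are the inversive symmetry (9), namely $\text{gyr}[a,b]\circ\text{gyr}[b,a]=I$ --- which I would obtain from the explicit formula (6) together with (G4), bootstrapping through the auxiliary relation $\text{gyr}[a,b]=\text{gyr}[a,b\oplus a]$ --- and, with (9) in hand, the right gyroassociative law (1), which then drops out of reading (G3) backwards and replacing $\text{gyr}[a,b]\circ\text{gyr}[b,a]$ by the identity; the right loop property (2) then reads $\text{gyr}[a,b\oplus a]=\text{gyr}^{-1}[b\oplus a,a]=\text{gyr}^{-1}[b,a]=\text{gyr}[a,b]$ via (9) and (G4), and the even symmetry (11) is analogous. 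The main obstacle throughout is not any single manipulation but keeping the dependency graph of the twelve identities acyclic; the bottleneck items are (12), (9), and (10).
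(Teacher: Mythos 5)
The paper gives no proof of this theorem at all --- it is quoted from Ungar's book \cite{Ung} --- so your plan can only be judged against the standard development there. The first half of your plan is correct: the non-circular derivation of (12) (left-translating $a\oplus z=a\oplus\text{gyr}[0,a]z$ by $\ominus a$ and using injectivity of $\text{gyr}[\ominus a,a]$, then (G4) to get $\text{gyr}[\ominus a,a]=I$), the left cancellation law (3), bijectivity of left translations with $L_a^{-1}=L_{\ominus a}$, $\ominus(\ominus a)=a$, uniqueness of inverses, and items (6), (8), (7) all check out; (4) also goes through, and in fact needs only (G3) and (G4), since $\text{gyr}[a\ominus b,\ominus b]=\text{gyr}[a,\ominus b]$ lets you collapse $(a\ominus b)\oplus\text{gyr}[a,\ominus b]b$ to $a\oplus(\ominus b\oplus b)=a$.

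The genuine gap sits exactly where you place the ``real work''. Your route to (9) is ``from (6) and (G4), bootstrapping through the auxiliary relation $\text{gyr}[a,b]=\text{gyr}[a,b\oplus a]$''; but that auxiliary relation \emph{is} item (2), which at the end of your plan you derive \emph{from} (9). You give no argument for obtaining (2) from (6) and (G4) alone, and the standard order is the reverse: one first proves the gyroautomorphism inversion law (9) --- e.g.\ via the gyrosemidirect product group, or via a telescoping use of gyrosum inversion (7) together with the conjugation identity $\text{gyr}[\varphi a,\varphi b]=\varphi\circ\text{gyr}[a,b]\circ\varphi^{-1}$ and the left loop property --- and only then deduces (2) and (11). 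Note also that even granting (2), the pair (2)+(G4) only transforms $\text{gyr}[a,b]$ into $\text{gyr}[a,b\oplus a]$, $\text{gyr}[b\oplus a,a]$, etc., and never produces an inverse gyration, so some genuinely new ingredient is needed for (9); as written your dependency graph is circular (or at best unsubstantiated) at its crux. Separately, two of your one-line claims are too quick: for (5), unwinding $(a\boxminus b)\oplus b$ with the right gyroassociative law leaves you needing the nested identity $\text{gyr}[\ominus\text{gyr}[a,b]b,\,a]b=\text{gyr}[a,b]b$ (equivalently the existence half of solvability of $x\oplus b=a$; the left loop property only gives uniqueness, i.e.\ $(x\oplus b)\boxminus b=x$), and (10) is not simply ``(7) applied to $a\boxplus b$'' --- the cogyroautomorphic inverse theorem requires (9), (11) and the loop machinery. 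So the plan is sound through (12), (3), (6), (7), (8), (4), but (9), and with it (1), (2), (5), (10), (11), still need the actual arguments.
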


\begin{theorem}\cite{Ung}\label{the2.5com}
 Let $(G, \oplus)$ be a gyrocommutative gyrogroup. Then, for any $a, b, c\in G$ we have
 \begin{enumerate}
\item[(1)] $\ominus(a\oplus b)=\ominus a\ominus b;$\quad \quad\quad\quad Gyroautomorphic Inverse Property
\item[(2)] $a\boxplus b=b\boxplus a;$
\item[(3)] $a\boxplus b=a\oplus((\ominus a\oplus b)\oplus a)$.
\end{enumerate}
\end{theorem}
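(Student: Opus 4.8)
The plan is to prove the three identities in the order (1), (3), (2), so that each is available for the next one and no circular dependence on the cooperation $\boxplus$ arises.

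For (1), I would begin with the Gyrosum Inversion law (Theorem~\ref{the1.3}(7)), $\ominus(a\oplus b)=\text{gyr}[a,b](\ominus b\ominus a)$, and rewrite the argument by the gyrocommutative law: $(\ominus b)\oplus(\ominus a)=\text{gyr}[\ominus b,\ominus a]\big((\ominus a)\oplus(\ominus b)\big)$. Even Symmetry (Theorem~\ref{the1.3}(11)) replaces $\text{gyr}[\ominus b,\ominus a]$ by $\text{gyr}[b,a]$, and Inversive Symmetry (Theorem~\ref{the1.3}(9)) gives $\text{gyr}[a,b]\,\text{gyr}[b,a]=I$. Composing, the gyration cancels and one is left with $\ominus(a\oplus b)=\ominus a\ominus b$.

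For (3), I would unfold the definition $a\boxplus b=a\oplus\text{gyr}[a,\ominus b]b$ and evaluate $\text{gyr}[a,\ominus b]b$ using Theorem~\ref{the1.3}(6): $\text{gyr}[a,\ominus b](b)=\ominus(a\ominus b)\oplus\big(a\oplus(\ominus b\oplus b)\big)$. Since $\ominus b\oplus b=0$ and $a\oplus 0=a$, this collapses to $\ominus(a\ominus b)\oplus a$, and part (1) (with $\ominus(\ominus b)=b$) rewrites $\ominus(a\ominus b)$ as $\ominus a\oplus b$. Hence $a\boxplus b=a\oplus\big((\ominus a\oplus b)\oplus a\big)$.

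The step I expect to need the most care is (2), because $a\oplus\big((\ominus a\oplus b)\oplus a\big)$ is not visibly symmetric in $a$ and $b$. I would combine (3) with the Cogyroautomorphic Inverse Theorem (Theorem~\ref{the1.3}(10)): applying the latter with both arguments negated yields $b\boxplus a=\ominus\big((\ominus a)\boxplus(\ominus b)\big)$. Expanding $(\ominus a)\boxplus(\ominus b)$ by (3) into $\ominus a\oplus\big((a\ominus b)\ominus a\big)$, taking $\ominus$ of this, and pushing the inversion inward by repeated use of (1), every term flips sign and, after simplifying $\ominus(\ominus x)=x$, what survives is exactly $a\oplus\big((\ominus a\oplus b)\oplus a\big)=a\boxplus b$. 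Thus $b\boxplus a=a\boxplus b$. The only thing to monitor throughout is the correct bookkeeping of the gyroautomorphic inverse property on nested three-term sums; there is no genuinely hard content.
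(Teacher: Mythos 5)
Your argument is correct. Note that the paper itself gives no proof of this statement—it is quoted from Ungar's book \cite{Ung}—so there is no in-paper argument to compare against; checking your steps directly: for (1) the chain ``gyrosum inversion, then the gyrocommutative law applied to $\ominus b\ominus a$, then even symmetry (Theorem~\ref{the1.3}(11)) and inversive symmetry (Theorem~\ref{the1.3}(9)) to cancel $\text{gyr}[a,b]\circ\text{gyr}[b,a]=I$'' is exactly the standard derivation of the gyroautomorphic inverse property; for (3) the evaluation $\text{gyr}[a,\ominus b](b)=\ominus(a\ominus b)\oplus\bigl(a\oplus(\ominus b\oplus b)\bigr)=(\ominus a\oplus b)\oplus a$ via the gyrator identity (Theorem~\ref{the1.3}(6)) and part (1) is valid; and for (2), the identity $b\boxplus a=\ominus\bigl((\ominus a)\boxplus(\ominus b)\bigr)$ from Theorem~\ref{the1.3}(10) with negated arguments, expanded by (3) and simplified by three applications of (1), does land on $a\oplus\bigl((\ominus a\oplus b)\oplus a\bigr)=a\boxplus b$. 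Your ordering (1), (3), (2) is a legitimate and slightly non-standard organization: the usual textbook treatment establishes the commutativity of $\boxplus$ directly from the gyrocommutative law rather than deducing it from the ``$a\oplus((\ominus a\oplus b)\oplus a)$'' formula, whereas your route buys (2) almost for free once (3) is in hand, at the cost of making (2) depend on (3). All auxiliary identities you invoke (Theorem~\ref{the1.3}(6),(7),(9),(10),(11)) hold in arbitrary gyrogroups, and gyrocommutativity enters only through (1), as it must, so there is no hidden circularity.
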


\begin{definition}\cite{ST}\label{NS}
A subgyrogroup $N$ of a gyrogroup $G$ is
normal in $G$, written $N\trianglelefteq G$, if it is the kernel of a gyrogroup homomorphism of $G$.
\end{definition}

\begin{theorem}\cite{ST}\label{the2.5}
Let $N$ be a subgyrogroup of a gyrogroup $G$. Then $N$ is a
normal subgyrogroup in $G$ if and only if
$a\oplus(N\oplus b)=(a\oplus b)\oplus N=(a\oplus N)\oplus b$
for all $a, b\in G$.
\end{theorem}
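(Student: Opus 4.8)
The plan is to prove the two implications separately, relying throughout on the identities of Theorem~\ref{the1.3} --- in particular the right gyroassociative law, the cancellation identities $(\ominus a)\oplus(a\oplus b)=b$, $(a\ominus b)\boxplus b=a$ and $(a\boxminus b)\oplus b=a$, and gyrosum inversion --- together with two elementary facts: every gyrogroup admits both left and right cancellation, and every gyrogroup homomorphism $\phi$ fixes $0$, commutes with $\ominus$ and with all gyroautomorphisms, and hence also respects the cooperations $\boxplus$ and $\boxminus$.

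For the direction $(\Rightarrow)$, assume $N=\ker\phi$ for some gyrogroup homomorphism $\phi\colon G\to G'$. First I would show that each of the sets $a\oplus(N\oplus b)$, $(a\oplus b)\oplus N$ and $(a\oplus N)\oplus b$ is contained in the fibre $\phi^{-1}\big(\phi(a\oplus b)\big)$: applying $\phi$ to a generic element and using $\phi(n)=0$ for $n\in N$, the homomorphism property, and Theorem~\ref{the1.3}(3), each expression collapses to $\phi(a\oplus b)$. For the reverse inclusions I would, given $x$ with $\phi(x)=\phi(a\oplus b)$, exhibit explicit members of $N$: writing $x=(a\oplus b)\oplus n_1$ with $n_1=\ominus(a\oplus b)\oplus x$ forces $n_1\in N$ by Theorem~\ref{the1.3}(3); writing $x=a\oplus(n_2\oplus b)$ with $n_2=\big(\ominus a\oplus x\big)\boxminus b$ gives $n_2\oplus b=\ominus a\oplus x$ by Theorem~\ref{the1.3}(5), and applying $\phi$ (which respects $\boxminus$) together with Theorem~\ref{the1.3}(3) shows $\phi(n_2)=\phi(b)\boxminus\phi(b)=0$ by right cancellation; writing $x=(a\oplus n_3)\oplus b$ with $n_3=\ominus a\oplus(x\boxminus b)$ is handled in the same way using $(a\oplus b)\boxminus b=a$. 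Hence the three sets coincide, all equal to $\phi^{-1}\big(\phi(a\oplus b)\big)$.

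For the direction $(\Leftarrow)$, assume the coset identities hold for all $a,b\in G$. Taking $a=0$ gives $N\oplus b=b\oplus N$ for every $b$, so left and right cosets agree; a short argument, substituting suitable elements and using Theorem~\ref{the1.3}(6) (which expresses $\text{gyr}[a,n](c)$ purely in terms of $\oplus$), then shows that the left cosets of $N$ partition $G$, so the quotient set $G/N=\{a\oplus N:a\in G\}$ is well defined. I would then put $(a\oplus N)\ast(b\oplus N):=(a\oplus b)\oplus N$ and $\widehat{\text{gyr}}[a\oplus N,b\oplus N](c\oplus N):=\text{gyr}[a,b](c)\oplus N$; the hypothesis $a\oplus(N\oplus b)=(a\oplus b)\oplus N=(a\oplus N)\oplus b$ is precisely what is needed to prove that $\ast$ is independent of the choice of the representatives $a$ and $b$, and an analogous computation handles $\widehat{\text{gyr}}$. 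One then checks that $(G/N,\ast)$ satisfies $(G1)$--$(G4)$, with identity $0\oplus N$, inverse $\ominus a\oplus N$ and gyroautomorphisms $\widehat{\text{gyr}}$, each axiom descending from the corresponding axiom in $G$, and that the canonical projection $\pi\colon G\to G/N$, $\pi(a)=a\oplus N$, is a gyrogroup homomorphism with $\ker\pi=N$. By Definition~\ref{NS} this gives $N\trianglelefteq G$.

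The main obstacle will be the $(\Leftarrow)$ direction: first extracting from the hypothesis that the cosets of $N$ genuinely partition $G$ (equivalently, that $N$ is an $L$-subgyrogroup), and then transporting the gyrogroup axioms --- above all $(G3)$ and $(G4)$, which involve the gyroautomorphisms --- through the quotient. Verifying that each $\widehat{\text{gyr}}[a\oplus N,b\oplus N]$ is a well-defined automorphism of $G/N$ and that the left loop property holds modulo $N$ is the most delicate bookkeeping; everything else reduces to routine applications of the identities in Theorem~\ref{the1.3}.
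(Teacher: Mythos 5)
A preliminary remark: the paper itself gives no proof of this theorem at all --- it is imported verbatim from \cite{ST} --- so there is no in-paper argument to compare yours against; I can only assess the proposal on its own terms. Your forward direction is correct and essentially complete: each of the three sets maps under $\phi$ to $\phi(a\oplus b)$, and your explicit witnesses $n_1=\ominus(a\oplus b)\oplus x$, $n_2=(\ominus a\oplus x)\boxminus b$, $n_3=\ominus a\oplus(x\boxminus b)$ do land in $N=\ker\phi$ by the computations you indicate, using Theorem \ref{the1.3}(3)--(5) and the fact that a homomorphism commutes with gyrations (which indeed follows from the gyrator identity, Theorem \ref{the1.3}(6)).

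The backward direction is where the proposal is only a plan, and one step is less ``analogous'' than you claim. The partition argument you sketch does work: for $n\in N$ one has $N\oplus n=N$ (since $m=(m\boxminus n)\oplus n$ and $\text{gyr}[m,n](N)=N$ by Theorem \ref{the1.3}(6) and closure of $N$), so $(a\oplus n)\oplus N=a\oplus(N\oplus n)=a\oplus N$ and overlapping cosets coincide; well-definedness of $\ast$ then follows directly from the hypothesis (together with $N\oplus b=b\oplus N$ from the case $a=0$). But well-definedness of $\widehat{\text{gyr}}$ is not obtained by the same computation: since $\text{gyr}[a,b](c)=\ominus(a\oplus b)\oplus(a\oplus(b\oplus c))$, you must first know that \emph{inversion} descends to $G/N$, i.e.\ that $d'\in d\oplus N$ forces $\ominus d'\in \ominus d\oplus N$; this does not follow from well-definedness of $\ast$ alone, because uniqueness of inverses in the quotient groupoid is not yet available. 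It is true, but needs its own short argument, e.g.\ the hypothesis with $b=\ominus d$ gives $(d\oplus N)\ominus d=(d\ominus d)\oplus N=N$, so for $d'=d\oplus n$ the element $d'\ominus d$ lies in $N$, whence $\ominus d\in\ominus d'\oplus N$ and the cosets of $\ominus d$ and $\ominus d'$ coincide. With that supplied, your definition of $\widehat{\text{gyr}}$ is well posed, the axioms (G1)--(G4) and the loop property descend along $\pi$ as you say, and $\ker\pi=N$, so Definition \ref{NS} applies. In short: the strategy is the natural one and would succeed, the forward half is done, but the backward half currently defers its verifications, and the one genuinely missing idea is the descent of the inverse (equivalently of the gyrations) to the quotient.
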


Since in Topology 'normal' refers to a separation property of spaces, we will use the term
'invariant' to denote this property of subgyrogroups.

\begin{definition}\cite{Atip}
A triple $(G, \tau,  \oplus)$ is called a {\it topological gyrogroup} if and only if
\begin{enumerate}
\item[(1)] $(G, \tau)$ is a topological space;
\item[(2)] $(G, \oplus)$ is a gyrogroup;
\item[(3)] The binary operation $\oplus:G \times G\rightarrow G$ is continuous where $G\times G$ is endowed with the product topology
and the operation of taking the inverse $\ominus(\cdot ) : G  \rightarrow G $, i.e. $x\rightarrow\ominus x$, is continuous.
\end{enumerate}
\end{definition}

If a triple $( G, \tau,  \oplus)$ satisfies the first two conditions and its binary operation is continuous, we call such
triple a {\it paratopological gyrogroup} \cite{Atip1}. Sometimes we will just say that $G$ is a topological gyrogroup (paratopological gyrogroup) if the binary operation and the topology are clear from the context.

\begin{definition}\cite{BL}\label{defst}
Let $( G, \tau,  \oplus)$ be a topological gyrogroup. We say that $G$ is a strongly topological gyrogroup if
there exists a neighborhood base $\mathscr{U}$ of the identity 0 in $G$ such that, for every $U\in \mathscr{U}$,
$\text{gyr}[x, y](U)=U$ for any $x, y\in G$.
\end{definition}

Similarly, we called a paratopological gyrogroup $( G, \tau,  \oplus)$ a {\it strongly paratopological gyrogroup} if
there exists a neighborhood base $\mathscr{U}$ of the identity 0 in $G$ such that, for every $U\in \mathscr{U}$,
$\text{gyr}[x, y](U)=U$ for any $x, y\in G$.

\begin{proposition}\cite{Atip1}\label{pro23}
Let $G$ be a paratopological gyrogroup, $x, y\in G$ and $A, B\subseteq G$.
\begin{enumerate}
\item[(1)] The left translation $L_{x}: G \rightarrow G$, where $L_{x}(y) = x \oplus y$ for every $y\in G$, is
homeomorphism;
\item[(2)] $A$ is closed if and only if $x\oplus A$ is closed;
\item[(3)] $A$ is open if and only if $x\oplus A$ and $B\oplus A$ are open;
\end{enumerate}
\end{proposition}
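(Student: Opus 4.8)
The plan is to establish (1) first, since (2) and (3) then follow formally, and to be careful that the only ``group identity'' available to us is the left cancellation law of Theorem~\ref{the1.3}(3): in a gyrogroup $L_x$ is \emph{not} a homomorphism and $L_x\circ L_z\neq L_{x\oplus z}$ in general, so the inverse of $L_x$ cannot be obtained by the usual ``$L_x^{-1}=L_{x^{-1}}$ because $L$ is a homomorphism'' argument.

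First I would check continuity of $L_x$. Writing $\iota_x\colon G\to G\times G$ for the map $y\mapsto(x,y)$, which is continuous because its coordinates (a constant and the identity) are, we have $L_x=\oplus\circ\iota_x$, and $\oplus$ is continuous by the definition of a paratopological gyrogroup; hence $L_x$ is continuous. Next I would show $L_x$ is a bijection by exhibiting $L_{\ominus x}$ as a two-sided inverse. Theorem~\ref{the1.3}(3) gives $(\ominus x)\oplus(x\oplus y)=y$ for all $y$, i.e.\ $L_{\ominus x}\circ L_x=\mathrm{id}_G$. Using the standard gyrogroup identity $\ominus(\ominus x)=x$ (a consequence of $(G2)$ and uniqueness of inverses, which in turn follows from left cancellation), the same identity applied with $\ominus x$ in place of $x$ yields $x\oplus((\ominus x)\oplus y)=L_x\circ L_{\ominus x}(y)=y$, so $L_x\circ L_{\ominus x}=\mathrm{id}_G$. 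Thus $L_x$ is bijective with $L_x^{-1}=L_{\ominus x}$, and $L_{\ominus x}$ is continuous by exactly the argument just used for $L_x$; hence $L_x$ is a homeomorphism.

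For (2), I would transport closed sets through the homeomorphism $L_x$. If $A$ is closed then $x\oplus A=L_x(A)=(L_{\ominus x})^{-1}(A)$ is closed, being the preimage of a closed set under the continuous map $L_{\ominus x}$; conversely, if $x\oplus A$ is closed then $A=(\ominus x)\oplus(x\oplus A)=(L_x)^{-1}(x\oplus A)$ is closed. Statement (3) for the set $x\oplus A$ is the verbatim ``open'' analogue of this. For the remaining part of (3), once we know that each $b\oplus A$ is open whenever $A$ is, we get $B\oplus A=\bigcup_{b\in B}(b\oplus A)$ open as a union of open sets.

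I do not expect a genuine obstacle here: this is the gyrogroup counterpart of a routine fact about paratopological groups, and the only point requiring care — identifying $L_x^{-1}$ directly from the left cancellation law rather than from homomorphism properties — is handled by Theorem~\ref{the1.3}(3) together with $\ominus(\ominus x)=x$. Everything else is continuity of $\oplus$ and elementary topology.
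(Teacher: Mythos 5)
Your proof is correct, and since the paper only cites Proposition~\ref{pro23} from \cite{Atip1} without reproducing a proof, there is nothing in the text to diverge from: your argument is the standard one. The two points you single out — continuity of $L_x$ from the joint continuity of $\oplus$, and identifying $L_x^{-1}=L_{\ominus x}$ via the left cancellation law of Theorem~\ref{the1.3}(3) together with $\ominus(\ominus x)=x$ rather than via any homomorphism property — are exactly what is needed, and the deduction of (2) and (3) (including writing $B\oplus A=\bigcup_{b\in B}(b\oplus A)$) is routine once (1) is in place.
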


\begin{proposition}\label{pro23s}
Let $G$ be a paratopological gyrogroup, $x, y\in G$ and $A, B\subseteq G$.
\begin{enumerate}
\item[(1)] $\text{gyr}[x,y]: G \rightarrow G$, for every $x, y\in G$, is
homeomorphism;
\item[(2)] $A$ is closed if and only if $\text{gyr}[x,y](A)$ is closed;
\item[(3)] $A$ is open if and only if $\text{gyr}[x,y](A)$ is open.
\end{enumerate}
\end{proposition}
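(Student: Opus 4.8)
The plan is to reduce everything to Proposition \ref{pro23} together with the algebraic identities in Theorem \ref{the1.3}. The key structural fact I would exploit is identity (6) of Theorem \ref{the1.3}, namely $\mathrm{gyr}[a,b](c)=\ominus(a\oplus b)\oplus\bigl(a\oplus(b\oplus c)\bigr)$. Fixing $a=x$ and $b=y$, this says that the map $c\mapsto\mathrm{gyr}[x,y](c)$ is the composition
$$
\mathrm{gyr}[x,y]=L_{\ominus(x\oplus y)}\circ L_{x}\circ L_{y},
$$
where $L_{t}$ denotes left translation by $t$. Indeed, $L_y(c)=y\oplus c$, then $L_x(y\oplus c)=x\oplus(y\oplus c)$, and finally $L_{\ominus(x\oplus y)}$ applied to that gives exactly the right-hand side above.

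First I would verify this composition identity carefully (it is a direct rewriting of Theorem \ref{the1.3}(6) with no topology involved). Next, by Proposition \ref{pro23}(1), each left translation $L_{y}$, $L_{x}$, and $L_{\ominus(x\oplus y)}$ is a homeomorphism of $G$ onto itself; hence their composition $\mathrm{gyr}[x,y]$ is a homeomorphism, which is statement (1). Alternatively, one can note that $\mathrm{gyr}[x,y]\in\mathrm{Aut}(G,\oplus)$ is a bijection by axiom ($G3$), and that both $\mathrm{gyr}[x,y]$ and its inverse $\mathrm{gyr}[y,x]$ (Theorem \ref{the1.3}(9)) are continuous because the binary operation is jointly continuous and inversion-free translations are continuous — but the composition-of-translations argument is cleaner since it invokes only Proposition \ref{pro23}(1). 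Once (1) is established, statements (2) and (3) are immediate: a bijection $f$ of a space onto itself is a homeomorphism precisely when $f$ and $f^{-1}$ carry closed sets to closed sets (equivalently open sets to open sets), so $A$ is closed (resp. open) if and only if $\mathrm{gyr}[x,y](A)$ is closed (resp. open), applying $f=\mathrm{gyr}[x,y]$ in one direction and $f^{-1}=\mathrm{gyr}[y,x]$ in the other.

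The only point requiring any care — and it is minor — is confirming that the decomposition of $\mathrm{gyr}[x,y]$ into left translations is valid in a \emph{paratopological} gyrogroup, where inversion need not be continuous: this is fine, because we only ever translate by the \emph{fixed} elements $y$, $x$, and $\ominus(x\oplus y)$, and Proposition \ref{pro23}(1) guarantees each such translation is a homeomorphism regardless of the continuity of $\ominus(\cdot)$. So there is no genuine obstacle here; the proposition is essentially a corollary of Proposition \ref{pro23} once the identity from Theorem \ref{the1.3}(6) is written in the form above.
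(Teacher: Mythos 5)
Your proposal is correct and follows essentially the same route as the paper: both use the gyrator identity (Theorem \ref{the1.3}(6)) to write $\mathrm{gyr}[x,y]=L_{\ominus(x\oplus y)}\circ L_{x}\circ L_{y}$ and then invoke Proposition \ref{pro23} to conclude that this composition of left translations is a homeomorphism, from which (2) and (3) follow. Your extra remark that only translations by fixed elements are used (so continuity of inversion is never needed) is a sensible clarification but does not change the argument.
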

\begin{proof}
By definition, $\text{gyr}[x,y]$ is bijective. Moreover, the gyrator identity provides that
$\text{gyr}[x,y]=L_{\ominus(x\oplus y)}\circ L_{x}\circ L_y$
which is a homeomorphism by Proposition \ref{pro23}. So (2) and (3) are true.
\end{proof}
\begin{proposition}\label{pro22}
Let $G$ be a paratopological gyrogroup and $U$ be a neighborhood of the identity 0. Then
there is an open neighborhood $V$ of 0 such that $\overline{V}\oplus \overline{V}\subseteq \overline{U}$.
\end{proposition}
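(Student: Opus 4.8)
The plan is to reduce the statement to the elementary inclusion $\overline{A}\oplus\overline{B}\subseteq\overline{A\oplus B}$, valid in any paratopological gyrogroup by virtue of the joint continuity of $\oplus$ alone. Granting this, it suffices to produce an open neighbourhood $V$ of $0$ with $V\oplus V\subseteq U$, since then $\overline{V}\oplus\overline{V}\subseteq\overline{V\oplus V}\subseteq\overline{U}$.

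First I would verify the auxiliary inclusion. Fix $A,B\subseteq G$, take $p\in\overline{A}$ and $q\in\overline{B}$, and let $O$ be an arbitrary open neighbourhood of $p\oplus q$. Since $\oplus\colon G\times G\to G$ is continuous at $(p,q)$, there are open sets $O_{1}\ni p$ and $O_{2}\ni q$ with $O_{1}\oplus O_{2}\subseteq O$. Choosing $a\in A\cap O_{1}$ and $b\in B\cap O_{2}$ (possible because $p\in\overline{A}$ and $q\in\overline{B}$) gives $a\oplus b\in (A\oplus B)\cap O$, so $O$ meets $A\oplus B$; as $O$ was arbitrary, $p\oplus q\in\overline{A\oplus B}$. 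Hence $\overline{A}\oplus\overline{B}\subseteq\overline{A\oplus B}$.

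Next I would build $V$. Because $0\oplus 0=0\in U$ and $\oplus$ is continuous, the preimage $\oplus^{-1}(U)$ is an open subset of $G\times G$ containing $(0,0)$, hence contains a basic open rectangle $V_{1}\times V_{2}$ with $0\in V_{1}\cap V_{2}$. Put $V:=V_{1}\cap V_{2}$; this is an open neighbourhood of $0$ satisfying $V\oplus V\subseteq V_{1}\oplus V_{2}\subseteq U$. Combining with the auxiliary inclusion yields $\overline{V}\oplus\overline{V}\subseteq\overline{V\oplus V}\subseteq\overline{U}$, which is exactly the assertion.

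There is essentially no serious obstacle; the point worth stressing is that the auxiliary inclusion uses only joint continuity of $\oplus$ — neither continuity of the inverse nor any gyrogroup identity beyond the existence of the operation is invoked — so the whole argument stays within the class of paratopological gyrogroups. (If more room were wanted, iterating continuity at $(0,0)$ would produce $V$ with $V\oplus V\oplus V\subseteq U$, but for the inclusion as stated a single application suffices.)
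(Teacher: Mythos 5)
Your proof is correct and takes essentially the same route as the paper: joint continuity of $\oplus$ at $(0,0)$ yields $V$ with $V\oplus V\subseteq U$, and your pointwise verification of $\overline{V}\oplus\overline{V}\subseteq\overline{V\oplus V}$ is just the paper's step $op_2(\overline{V}\times\overline{V})=op_2(\overline{V\times V})\subseteq\overline{op_2(V\times V)}$ spelled out by hand. (One cosmetic point: since $U$ is only assumed to be a neighbourhood, not open, argue with its interior or invoke continuity at the point $(0,0)$ directly rather than saying $\oplus^{-1}(U)$ is open.)
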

\begin{proof}
For $G$ is a paratopological gyrogroup, then $op_2:G\times G\rightarrow G$ defined by $op_2(x,y)=x\oplus y$ is continuous.
Because $0\oplus 0=0$, and $U$ is a neighborhood of the identity 0,
there exist a neighborhood $V$ of $0$ such that $\overline{V}\oplus \overline{V}=op_2(\overline{V}\times\overline{V})
=op_2(\overline{V\times V})\subseteq \overline{op_2(V\times V)}= \overline{V\oplus V}\subseteq \overline{U}$.
\end{proof}

\begin{lemma}\label{lem25}
Let $G$ be a paratopological gyrogroup and $\mathscr{U}$ be the neighborhood base
at 0 of $G$.
Then for $B=\bigcap\{\overline{U}:U\in\mathscr{U}\}$,
$\text{gyr}[a, b](B)=B$ for any $a, b\in G$.
\end{lemma}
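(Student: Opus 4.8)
The plan is to show that $\mathrm{gyr}[a,b]$ maps the set $B=\bigcap\{\overline{U}:U\in\mathscr{U}\}$ onto itself for every $a,b\in G$. By Proposition~\ref{pro23s}, each $\mathrm{gyr}[a,b]$ is a homeomorphism of $G$ onto itself, and it is a gyroautomorphism, hence in particular an automorphism of the groupoid $(G,\oplus)$ fixing the identity $0$. The key observation is that $B$ admits a description that is manifestly preserved by any such automorphism-homeomorphism. First I would note that $\mathscr{U}$ is a neighborhood base at $0$, so $\mathrm{gyr}[a,b](\mathscr{U})=\{\mathrm{gyr}[a,b](U):U\in\mathscr{U}\}$ is again a neighborhood base at $0$: indeed $\mathrm{gyr}[a,b]$ is a homeomorphism fixing $0$, so it sends neighborhoods of $0$ to neighborhoods of $0$ and preserves the base property.

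Next I would compute, using that $\mathrm{gyr}[a,b]$ is a homeomorphism (so it commutes with closure, $\mathrm{gyr}[a,b](\overline{U})=\overline{\mathrm{gyr}[a,b](U)}$) and a bijection (so it commutes with arbitrary intersections),
\[
\mathrm{gyr}[a,b](B)=\mathrm{gyr}[a,b]\Big(\bigcap_{U\in\mathscr{U}}\overline{U}\Big)=\bigcap_{U\in\mathscr{U}}\mathrm{gyr}[a,b](\overline{U})=\bigcap_{U\in\mathscr{U}}\overline{\mathrm{gyr}[a,b](U)}=\bigcap_{V\in\mathrm{gyr}[a,b](\mathscr{U})}\overline{V}.
\]
Since $\mathrm{gyr}[a,b](\mathscr{U})$ is a neighborhood base at $0$, the last intersection equals $\bigcap\{\overline{W}:W\ \text{a neighborhood of}\ 0\}$, which is exactly the same as $\bigcap\{\overline{U}:U\in\mathscr{U}\}=B$, because the intersection of the closures over a neighborhood base at a point depends only on the point, not on the choice of base (any neighborhood contains a basic one and vice versa). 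Hence $\mathrm{gyr}[a,b](B)=B$.

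There is essentially no serious obstacle here; the only point requiring a little care is the claim that $\mathrm{gyr}[a,b]$ carries a neighborhood base at $0$ to a neighborhood base at $0$, which I would justify simply by invoking that $\mathrm{gyr}[a,b]$ is a homeomorphism (Proposition~\ref{pro23s}) together with $\mathrm{gyr}[a,b](0)=0$ (from Theorem~\ref{the1.3} or directly, since a gyroautomorphism fixes the identity). One could also phrase the whole argument more slickly: $B$ is the intersection of all closed neighborhoods of $0$, a topological invariant of the pointed space $(G,0)$, and every $\mathrm{gyr}[a,b]$ is a self-homeomorphism of $(G,0)$, so it must preserve $B$; I would include the explicit computation above for completeness.
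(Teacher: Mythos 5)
Your proof is correct, and it rests on the same key facts as the paper's: by Proposition~\ref{pro23s} each $\text{gyr}[a,b]$ is a self-homeomorphism of $G$ fixing $0$, so it preserves the set $B$. The paper packages this as two inclusions, using continuity of $\text{gyr}[a,b]$ and of its inverse $\text{gyr}[b,a]$ (Theorem~\ref{the1.3}(9)) to get $\text{gyr}[a,b](B)\subseteq B$ and $B\subseteq\text{gyr}[a,b](B)$, whereas you obtain the equality in one computation via $\text{gyr}[a,b](\overline{U})=\overline{\text{gyr}[a,b](U)}$ and the observation that $\bigcap\{\overline{V}:V\in\mathscr{V}\}$ is independent of the choice of neighborhood base $\mathscr{V}$ at $0$ --- an essentially equivalent, slightly slicker rendering of the same argument.
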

\begin{proof}
For $a, b\in G$, suppose $f(x)=\text{gyr}[a, b](x)$ for any $x\in G$. By Proposition \ref{pro23s}, $f$ is homeomorphism.
Since $f(0)=\text{gyr}[a, b](0)=0$, for $U\in \mathscr{U}$,
there exists $V\in \mathscr{U}$ such that $\text{gyr}[a, b](\overline{V})=f(\overline{V})\subseteq \overline{f(V)}\subseteq\overline{U}$.
It follows that $\text{gyr}[a, b](B)\subseteq B$, for each $a, b\in G$.

It is obvious that $f^{-1}(x)=\text{gyr}[b, a](x)$ by Theorem \ref{the1.3} (9),
which is continuous. Since $f^{-1}(0)=\text{gyr}[b, a](0)=0$,
for $U_1\in \mathscr{U}$,
there exists $V_1\in \mathscr{U}$ such that $\text{gyr}[b, a](\overline{V_1})=f^{-1}(\overline{V_1})\subseteq \overline{f^{-1}(V_1)}\subseteq\overline{U_1}$.
It follows that $\text{gyr}[b, a](B)\subseteq B$, for each $a, b\in G$.
Thus we have $B\subseteq \text{gyr}[a, b](B)$.
So we get $\text{gyr}[a, b](B)=B$ for any $a, b\in G$.
\end{proof}


\begin{theorem}\cite{JX1}\label{the2.8}
Let $G$ be a Hausdorff topological gyrogroup and $\mathcal{U}$ an open base at the neutral element $0$ of $G$.
The following conditions hold:
\begin{enumerate}
\item[(8)] for every $U\in\mathcal{U}$ and $x\in G$, there exists $V\in \mathcal{U}$ such that $V\boxplus x\subseteq x\oplus U$ and $x \oplus V \subseteq x\boxplus U$;
\item[(9)] for every $U\in\mathcal{U}$, there exists $V\in \mathcal{U}$ such that $\ominus V\subseteq U$.
\end{enumerate}
\end{theorem}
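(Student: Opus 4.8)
The plan is to dispose of (9) by a one-line continuity argument and to obtain (8) by pulling a suitable basic neighbourhood of $x$ back along an explicitly written continuous map that fixes $0$.

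For (9): since $G$ is a topological gyrogroup, the inversion $\ominus(\cdot)\colon G\to G$ is continuous and $\ominus 0=0$. Given $U\in\mathcal U$, continuity at $0$ yields an open neighbourhood $W$ of $0$ with $\ominus W\subseteq U$; as $\mathcal U$ is an open base at $0$, any $V\in\mathcal U$ with $V\subseteq W$ satisfies $\ominus V\subseteq\ominus W\subseteq U$.

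The only preliminary fact needed for (8) is that the cooperation $\boxplus$ is jointly continuous. Indeed, by Definition \ref{defbox} $a\boxplus b=a\oplus\text{gyr}[a,\ominus b]b$, and by Theorem \ref{the1.3}(6) $\text{gyr}[a,b](c)=\ominus(a\oplus b)\oplus(a\oplus(b\oplus c))$, so $\boxplus$ is a composition of the continuous operations $\oplus$ and $\ominus$; in particular, for fixed $x$, the maps $v\mapsto v\boxplus x$ and $v\mapsto x\boxplus v$ are continuous. We also record $0\boxplus x=\text{gyr}[0,\ominus x]x=x$, by Theorem \ref{the1.3}(12). Now fix $x\in G$ and $U\in\mathcal U$. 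For the inclusion $V\boxplus x\subseteq x\oplus U$, set $g_1(v)=\ominus x\oplus(v\boxplus x)$; then $g_1$ is continuous and $g_1(0)=\ominus x\oplus x=0$, so there is an open neighbourhood $W_1$ of $0$ with $g_1(W_1)\subseteq U$, and by the left cancellation laws (Theorem \ref{the1.3}(3) together with the bijectivity of $L_x$ from Proposition \ref{pro23}(1)) one has $x\oplus g_1(v)=v\boxplus x$, whence $v\in W_1$ forces $v\boxplus x\in x\oplus U$. For the inclusion $x\oplus V\subseteq x\boxplus U$, set $g_2(v)=\ominus\bigl(\ominus(x\oplus v)\oplus x\bigr)$; then $g_2$ is continuous and $g_2(0)=\ominus(\ominus x\oplus x)=0$, so there is an open neighbourhood $W_2$ of $0$ with $g_2(W_2)\subseteq U$, and for $v\in W_2$, writing $u=g_2(v)$ we get $\ominus u=\ominus(x\oplus v)\oplus x$, hence $(x\oplus v)\ominus u=x$ by left cancellation, so Theorem \ref{the1.3}(4) gives $x\boxplus u=\bigl((x\oplus v)\ominus u\bigr)\boxplus u=x\oplus v$, that is, $x\oplus v\in x\boxplus U$. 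Finally choose $V\in\mathcal U$ with $V\subseteq W_1\cap W_2$ (possible since $\mathcal U$ is a base at $0$); this single $V$ satisfies both inclusions, proving (8).

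The step I expect to be the real content is the joint continuity of $\boxplus$ extracted from the gyrator identity, together with choosing the auxiliary maps $g_1,g_2$ precisely enough that the cancellation identities (3) and (4) of Theorem \ref{the1.3} collapse $x\oplus g_1(v)$ and $x\boxplus g_2(v)$ exactly onto the target expressions; once this is set up, everything reduces to continuity at $0$ and a routine pullback of a neighbourhood of $x$, and in fact neither the Hausdorff hypothesis nor the openness of $U$ is used.
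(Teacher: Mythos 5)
Your argument is correct. Note first that this paper does not prove the statement at all: Theorem \ref{the2.8} is imported from \cite{JX1} as a quoted result, so there is no in-paper proof to compare against; your proposal therefore has to stand on its own, and it does. Part (9) is exactly the one-line continuity-at-$0$ argument one expects. For part (8), your preliminary observation that $\boxplus$ is jointly continuous is right, since $a\boxplus b=a\oplus\bigl(\ominus(a\ominus b)\oplus a\bigr)$ by the gyrator identity of Theorem \ref{the1.3}(6), and the two auxiliary maps are chosen so that the cancellation identities do all the algebraic work: $x\oplus g_1(v)=v\boxplus x$ needs $L_x\circ L_{\ominus x}=\mathrm{id}$, which you correctly justify via Theorem \ref{the1.3}(3) plus the bijectivity of $L_x$ from Proposition \ref{pro23} (equivalently, via $\ominus(\ominus x)=x$), and $x\boxplus g_2(v)=x\oplus v$ follows from Theorem \ref{the1.3}(4) exactly as you write. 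Taking $V\in\mathcal{U}$ inside $W_1\cap W_2$ closes the argument. Your closing remark is also accurate: the proof uses only that $G$ is a topological gyrogroup and that $\mathcal{U}$ is a neighbourhood base at $0$; neither Hausdorffness nor openness of $U$ enters, so the statement holds in that greater generality.
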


\begin{proposition}\label{pro2.11}
Let $(G, \tau,\oplus)$ be a paratopological gyrogroup, $F$ a compact subset
of $G$, and $O$ an open subset of $G$ such that $F\subseteq O$.
Then there exists an open neighborhood $V$ of the
identity element 0 such that $F \oplus V \subseteq O$ and $V\oplus F\subseteq O$.
\end{proposition}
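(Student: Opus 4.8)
The plan is to run the standard ``tube lemma''-style argument twice, once on each side, using only the joint continuity of $\oplus$ and the compactness of $F$; no gyrogroup axioms beyond $x\oplus 0=0\oplus x=x$ enter.

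First I would produce an open $V_1\ni 0$ with $F\oplus V_1\subseteq O$. Fix $x\in F$. Since $x\oplus 0=x\in O$ and $O$ is open, joint continuity of $\oplus$ at $(x,0)$ yields open sets $U_x\ni x$ and $W_x\ni 0$ with $U_x\oplus W_x\subseteq O$. The family $\{U_x:x\in F\}$ is an open cover of the compact set $F$, so there are $x_1,\dots,x_n\in F$ with $F\subseteq\bigcup_{i=1}^n U_{x_i}$. Put $V_1=\bigcap_{i=1}^n W_{x_i}$, an open neighborhood of $0$. If $y\in F$ then $y\in U_{x_i}$ for some $i$, whence $y\oplus V_1\subseteq U_{x_i}\oplus W_{x_i}\subseteq O$; thus $F\oplus V_1\subseteq O$. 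Symmetrically, using continuity of $\oplus$ at each point $(0,x)$ with $x\in F$, I would get open sets $W'_x\ni 0$ and $U'_x\ni x$ with $W'_x\oplus U'_x\subseteq O$, extract a finite subcover $\{U'_{x_j}:j\le m\}$ of $F$, and set $V_2=\bigcap_{j=1}^m W'_{x_j}$; then $V_2\oplus F\subseteq O$. Finally $V=V_1\cap V_2$ is an open neighborhood of $0$ with $F\oplus V\subseteq F\oplus V_1\subseteq O$ and $V\oplus F\subseteq V_2\oplus F\subseteq O$, as required.

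The argument has essentially no obstacle. The only point to be careful about is that, since $\oplus$ need not be commutative (nor associative), the two containments $F\oplus V\subseteq O$ and $V\oplus F\subseteq O$ cannot be deduced from one another; one must carry out the covering argument on both sides and then intersect the two resulting neighborhoods. Everything else is the routine tube lemma, so I do not anticipate needing any of the finer gyrogroup identities from Theorem \ref{the1.3}.
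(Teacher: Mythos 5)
Your proof is correct and is essentially the same as the paper's: both are the standard tube-lemma argument applied to $\oplus^{-1}(O)$ using compactness of $F$ and joint continuity of $\oplus$, carried out separately on each side and finished by intersecting the two neighborhoods of $0$ (the paper merely states the tube-lemma step more tersely, while you unpack the finite-subcover details).
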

\begin{proof}
Since $\oplus:G\times G\rightarrow G$ is continuous in paratopological gyrogroup $G$,
$\oplus^{-1}(O)$ is an open set in $G\times G$ and $\{0\}\times F\subseteq \oplus^{-1}(O)$ for $F\subseteq O$.
Note that $\{0\}\times F$ is compact in $G\times G$, there exist open sets $V_1, W$ in $G$ such that $\{0\}\times F\subseteq V_1\times W\subseteq \oplus^{-1}(O)$. Thus $V_1\oplus F=\oplus(V_1\times F)\subseteq\oplus(V_1\times W)\subseteq O$.

Similarly, one can find an open set $V_2$ in $G$ such that $F\oplus V_2\subseteq O$.
Take $V=V_1\cap V_2$. Then we verify that $F \oplus V \subseteq O$ and $V\oplus F\subseteq O$.
\end{proof}


\begin{lemma}\label{lem17}
Let the neighborhood base
$\mathscr{U}$ at 0 of $G$ witness that $G$ is a strongly paratopological gyrogroup.
Then we have
$a\boxplus U\subseteq a\oplus U$ and $a\boxminus U\subseteq a\ominus U$ for each $a\in G, U\in \mathscr{U}.$
\end{lemma}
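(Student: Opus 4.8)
The plan is to unwind the definition of the cooperation $\boxplus$ and then invoke directly the defining property of the neighborhood base $\mathscr{U}$, namely that $\text{gyr}[x,y](U)=U$ for all $x,y\in G$ and all $U\in\mathscr{U}$. Since everything reduces to a membership verification, no topology is actually needed beyond the algebraic identities recorded in Definition \ref{defbox} and Theorem \ref{the1.3}; the statement is purely a consequence of strong gyration-invariance of the sets in $\mathscr{U}$.

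First I would prove $a\boxplus U\subseteq a\oplus U$. Fix $a\in G$, $U\in\mathscr{U}$, and take an arbitrary element $a\boxplus u$ with $u\in U$. By the defining equation $(\divideontimes)$ of Definition \ref{defbox}, $a\boxplus u=a\oplus\text{gyr}[a,\ominus u](u)$. Because $\mathscr{U}$ witnesses that $G$ is a strongly paratopological gyrogroup, $\text{gyr}[a,\ominus u](U)=U$, hence $\text{gyr}[a,\ominus u](u)\in U$, and therefore $a\boxplus u\in a\oplus U$. As $u\in U$ was arbitrary, $a\boxplus U\subseteq a\oplus U$.

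For $a\boxminus U\subseteq a\ominus U$ I would use the companion identity $a\boxminus b=a\ominus\text{gyr}[a,b](b)$ stated in Definition \ref{defbox} (which follows from $(\divideontimes)$ together with Theorem \ref{the1.3}(8)). Given $u\in U$, write $a\boxminus u=a\ominus\text{gyr}[a,u](u)$; since $\text{gyr}[a,u](U)=U$, the element $v:=\text{gyr}[a,u](u)$ lies in $U$, so $a\boxminus u=a\ominus v\in a\ominus U$, and again taking $u$ arbitrary yields the inclusion.

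There is essentially no genuine obstacle here: the only point requiring a little care is to apply the gyration-invariance of $U$ with the correct pair of arguments --- $[a,\ominus u]$ in the first inclusion and $[a,u]$ in the second --- which is exactly the pair appearing in the definition of the cooperation and cooperation-with-minus. Once this matching is observed, both inclusions are immediate, and no separation axiom or further property of $G$ is used.
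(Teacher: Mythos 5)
Your argument is correct and is essentially the paper's own proof: both unwind $a\boxplus u=a\oplus\mathrm{gyr}[a,\ominus u](u)$ and $a\boxminus u=a\ominus\mathrm{gyr}[a,u](u)$ and then use the witnessing property $\mathrm{gyr}[x,y](U)=U$ to place the gyrated element back in $U$; the paper merely phrases this as a union over $u\in U$ rather than element by element. No gaps.
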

\begin{proof}
By Definitions \ref{defbox} and \ref{defst}, we can get
$a\boxplus U= \bigcup_{u\in U}a\boxplus u=\bigcup_{u\in U}a\oplus \text{gyr}[a,\ominus u]u=a\oplus \bigcup_{u\in U}\text{gyr}[a,\ominus u]u\subseteq a\oplus U$
and $a\boxminus U= \bigcup_{u\in U}a\boxminus u=\bigcup_{u\in U}a\ominus \text{gyr}[a, u]u
=a\ominus \bigcup_{u\in U}\text{gyr}[a, u]u\subseteq a\ominus U$, for each $a\in G, U\in \mathscr{U}$.
\end{proof}

\begin{lemma}\label{lem2.13s}
Let the neighborhood base
$\mathscr{U}$ at 0 of $G$ witness that $G$ is a strongly paratopological gyrogroup.
Then for each $a, b\in G, U_1, U_2\in \mathscr{U}$ we have
$U_1\oplus U_2\in \mathscr{U}$.
\end{lemma}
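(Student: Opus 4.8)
The plan is to push through the one fact that distinguishes $\text{gyr}[a,b]$ from an arbitrary self-homeomorphism of $G$: by axiom $(G3)$ it lies in $\mathrm{Aut}(G,\oplus)$, hence is a \emph{bijective homomorphism} of the groupoid $(G,\oplus)$, so it commutes with $\oplus$ even at the level of subsets. Explicitly, for all $A,B\subseteq G$ and all $a,b\in G$,
$$\text{gyr}[a,b](A\oplus B)=\text{gyr}[a,b](A)\oplus\text{gyr}[a,b](B),$$
because $\text{gyr}[a,b](x\oplus y)=\text{gyr}[a,b](x)\oplus\text{gyr}[a,b](y)$ for all $x,y\in G$. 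Nothing beyond this identity and the hypothesis that $\mathscr U$ witnesses $G$ being a strongly paratopological gyrogroup will be needed.

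First I would record that $U_1\oplus U_2$ is again a neighbourhood of $0$: since $0\in U_2$ we have $U_1=U_1\oplus\{0\}\subseteq U_1\oplus U_2$, and $U_1$ is a neighbourhood of $0$; if the members of $\mathscr U$ are taken open, then $U_1\oplus U_2=\bigcup_{u\in U_1}(u\oplus U_2)$ is open by Proposition \ref{pro23}. Next, applying the displayed identity with $A=U_1$, $B=U_2$ and invoking the witnessing property $\text{gyr}[a,b](U_i)=U_i$, I obtain $\text{gyr}[a,b](U_1\oplus U_2)=U_1\oplus U_2$ for all $a,b\in G$; thus $U_1\oplus U_2$ has the two features required of a member of a witnessing base (it is a neighbourhood of $0$ fixed by every $\text{gyr}[a,b]$). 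Finally I would note that the family $\{U_1\oplus\cdots\oplus U_n:n\geq 1,\ U_i\in\mathscr U\}$ is still a neighbourhood base at $0$ — given a neighbourhood $W$ of $0$, joint continuity of $\oplus$ at $(0,0)$ yields a neighbourhood $V$ of $0$ with $V\oplus V\subseteq W$, and any $U_1,U_2\in\mathscr U$ inside $V$ give $U_1\oplus U_2\subseteq W$ — and that this enlarged family again witnesses strong paratopologicality by the previous sentence. Replacing $\mathscr U$ by its closure under finite $\oplus$-sums, we may assume $U_1\oplus U_2\in\mathscr U$, which is the assertion.

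I do not anticipate a genuine obstacle here: the heart of the matter is the displayed distributivity of $\text{gyr}[a,b]$ over $\oplus$, immediate from the groupoid-automorphism axiom $(G3)$, and the remainder is a routine bookkeeping remark about enlarging the base. The only points deserving a little care are (i) the passage from ``$\text{gyr}[a,b]$ is a homeomorphism'' (Proposition \ref{pro23s}) to ``$\text{gyr}[a,b]$ respects $\oplus$'', where it is the algebraic axiom $(G3)$ rather than the topology that does the work, and (ii) if one insists that $\mathscr U$ consist of open sets, justifying the openness of $U_1\oplus U_2$ as a union of left translates via Proposition \ref{pro23}.
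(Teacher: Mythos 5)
Your proposal is correct and follows essentially the same route as the paper: the whole content is the identity $\text{gyr}[a,b](U_1\oplus U_2)=\text{gyr}[a,b](U_1)\oplus\text{gyr}[a,b](U_2)=U_1\oplus U_2$, obtained from the automorphism axiom $(G3)$ together with Definition \ref{defst}. Your extra bookkeeping (openness of $U_1\oplus U_2$ and replacing $\mathscr U$ by its closure under finite $\oplus$-sums so that the literal membership $U_1\oplus U_2\in\mathscr U$ makes sense) is in fact a welcome clarification of the paper's terse final step ``which implies $U_1\oplus U_2\in\mathscr U$''.
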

\begin{proof}
By Definition \ref{defst}, we have
$\text{gyr}[a,b](U_1\oplus U_2)=\text{gyr}[a,b](U_1)\oplus \text{gyr}[a,b](U_2)
=U_1\oplus U_2$, for each $a, b\in G, U_1, U_2\in \mathscr{U}$,
which implies $U_1\oplus U_2\in \mathscr{U}$.
\end{proof}

\begin{lemma}\label{lem18}
Let the neighborhood base
$\mathscr{U}$ at 0 of $G$ witness that $G$ is a strongly paratopological gyrogroup.
Then we have
$(a\oplus U)\oplus W= a\oplus (U\oplus W)$ for each $a\in G, U, W\in \mathscr{U}.$
\end{lemma}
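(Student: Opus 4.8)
The plan is to reduce the set identity to the pointwise left gyroassociative law $x\oplus(y\oplus z)=(x\oplus y)\oplus\mathrm{gyr}[x,y](z)$ from ($G3$), together with the defining property of $\mathscr{U}$, namely $\mathrm{gyr}[x,y](W)=W$ for all $x,y\in G$ and $W\in\mathscr{U}$. Throughout I would use the conventions, already fixed in the excerpt, that $a\oplus S=\bigcup_{s\in S}a\oplus s$ and $S\oplus T=\bigcup_{s\in S}s\oplus T$, so that both sides of the claimed equality are explicit unions of singletons.

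First I would expand the left-hand side: $(a\oplus U)\oplus W=\bigcup_{u\in U}\bigcup_{w\in W}(a\oplus u)\oplus w$. Next I would expand the right-hand side using the gyrator: $a\oplus(U\oplus W)=\bigcup_{u\in U}\bigcup_{w\in W}a\oplus(u\oplus w)=\bigcup_{u\in U}\bigcup_{w\in W}(a\oplus u)\oplus\mathrm{gyr}[a,u](w)$, where the last equality is ($G3$) applied with $x=a$, $y=u$, $z=w$. The point is then that for each fixed $u\in U$ the inner union over $w\in W$ equals $(a\oplus u)\oplus\mathrm{gyr}[a,u](W)$, and since $\mathscr{U}$ witnesses that $G$ is strongly paratopological we have $\mathrm{gyr}[a,u](W)=W$; hence this inner union equals $(a\oplus u)\oplus W=\bigcup_{w\in W}(a\oplus u)\oplus w$. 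Substituting back, the right-hand side equals $\bigcup_{u\in U}\bigcup_{w\in W}(a\oplus u)\oplus w$, which is exactly the expanded left-hand side, giving the desired equality.

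I do not anticipate any real obstacle here: the proof is essentially a bookkeeping exercise applying ($G3$) inside a double union and then absorbing the gyroautomorphism using the invariance $\mathrm{gyr}[a,u](W)=W$ supplied by the hypothesis on $\mathscr{U}$. The only point requiring minor care is to make sure the set conventions are applied consistently — in particular that $\mathrm{gyr}[a,u]$ is applied to $w$ (not to $u$) and that the union over $w$ really does sweep out all of $W$ so that replacing $\mathrm{gyr}[a,u](W)$ by $W$ is legitimate; this is immediate from $\mathrm{gyr}[a,u]$ being a bijection of $G$ (Proposition \ref{pro23s}) fixing $\mathscr{U}$ setwise. If one wanted a one-line version, note that by the same computation $(a\oplus U)\oplus W=a\oplus(U\oplus\mathrm{gyr}[\cdot,\cdot]W)=a\oplus(U\oplus W)$, but I would prefer to present the double-union argument explicitly to keep the manipulation transparent.
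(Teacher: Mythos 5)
Your proof is correct and follows essentially the same idea as the paper: apply the gyroassociative law inside the double union and absorb the gyroautomorphism using $\mathrm{gyr}[a,u](W)=W$ from the definition of a strongly paratopological gyrogroup. The only cosmetic difference is that the paper proves the two inclusions separately, using the right gyroassociative law (Theorem \ref{the1.3}(1)) for one direction and ($G3$) for the other, whereas you obtain the equality in one pass from ($G3$) alone by exploiting that $\mathrm{gyr}[a,u]$ maps $W$ \emph{onto} $W$; both arguments are equally valid.
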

\begin{proof}
By Definition \ref{defst} and Theorem \ref{the1.3} (1), we have
$(a\oplus U)\oplus W=a\oplus (U\oplus \bigcup_{u\in U}\text{gyr}[u,a]W)\subseteq a\oplus (U\oplus W)$  and
$a\oplus (U\oplus W)=(a\oplus U)\oplus \bigcup_{u\in U}\text{gyr}[a,u]W\subseteq (a\oplus U)\oplus W$,
for each $a\in G, U, W\in \mathscr{U}.$ Thus we get the result.
\end{proof}

\begin{lemma}\label{lem19}
Let the neighborhood base
$\mathscr{U}$ at 0 of $G$ witness that $G$ is a strongly paratopological gyrogroup.
If $U\oplus V\subseteq W$, then $\ominus V\ominus U\subseteq \ominus W$, for each $W, U, V\in\mathscr{U}$.
\end{lemma}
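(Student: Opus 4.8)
The plan is to reduce the claimed set inclusion to a pointwise statement and then exploit the Gyrosum Inversion law together with the gyroautomorphism-invariance built into $\mathscr{U}$. First I would fix $u\in U$ and $v\in V$; since $U\oplus V\subseteq W$ we have $u\oplus v\in W$, and hence $\ominus(u\oplus v)\in\ominus W$. By Gyrosum Inversion (Theorem \ref{the1.3}(7)) we can write $\ominus(u\oplus v)=\text{gyr}[u,v](\ominus v\ominus u)$, and applying $\text{gyr}^{-1}[u,v]=\text{gyr}[v,u]$ (Theorem \ref{the1.3}(9)) to both sides gives the identity $\ominus v\ominus u=\text{gyr}[v,u]\big(\ominus(u\oplus v)\big)$.

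Next I would use that $\mathscr{U}$ witnesses that $G$ is a strongly paratopological gyrogroup, so $\text{gyr}[x,y](W)=W$ for all $x,y\in G$; combining this with Theorem \ref{the1.3}(8) (which says $\text{gyr}[x,y]$ commutes with taking inverses) yields, at the level of subsets of $G$, that $\text{gyr}[v,u](\ominus W)=\ominus\,\text{gyr}[v,u](W)=\ominus W$. Since $\ominus(u\oplus v)\in\ominus W$, the displayed identity then forces $\ominus v\ominus u=\text{gyr}[v,u]\big(\ominus(u\oplus v)\big)\in\ominus W$. As $u\in U$ and $v\in V$ were arbitrary, this gives $\ominus V\ominus U\subseteq\ominus W$, which is exactly the statement.

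There is no genuine obstacle in this argument; the only point that needs a little care is the bookkeeping of the set-valued gyroautomorphism identities, namely that $\text{gyr}[v,u](\ominus W)=\ominus W$ holds as an equality of subsets of $G$. This rests on $\text{gyr}[v,u]$ being a bijection that fixes $W$ setwise (Definition \ref{defst}) and commutes with $\ominus$ (Theorem \ref{the1.3}(8)); once these are invoked the proof is immediate.
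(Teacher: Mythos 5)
Your proof is correct and follows essentially the same route as the paper's: both pass from $u\oplus v\in W$ to $\text{gyr}[u,v](\ominus v\ominus u)\in\ominus W$ via Gyrosum Inversion (Theorem \ref{the1.3}(7)), then apply $\text{gyr}[v,u]=\text{gyr}^{-1}[u,v]$ together with Theorem \ref{the1.3}(8) and the invariance $\text{gyr}[v,u](W)=W$ from Definition \ref{defst} to conclude $\ominus v\ominus u\in\ominus W$. No gaps.
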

\begin{proof}
By Theorem \ref{the1.3} (7), we have
$\ominus(U\oplus V)=\bigcup_{u\in U, v\in V}\text{gyr}[u,v](\ominus v\ominus u)\subseteq\ominus W$.
We can get $\text{gyr}[u,v](\ominus v\ominus u)\in\ominus W$ for every $u\in U, v\in V$.
By Theorem \ref{the1.3}(8)(9), $\ominus v\ominus u\in\text{gyr}[v,u](\ominus W)=\ominus \text{gyr}[v,u](W)=\ominus W.$
Thus we get $\ominus V\ominus U\subseteq \ominus W$.
\end{proof}

\begin{lemma}\label{lem2.12}
Let the neighborhood base
$\mathscr{U}$ at 0 of $G$ witness that $G$ is a strongly paratopological gyrogroup.
Then for each $W\in\mathscr{U}$, there exists $U\in\mathscr{U}$ such that $U\oplus (U\oplus U) \subseteq W$ and $(\ominus U\ominus U)\ominus U\subseteq\ominus W$.
\end{lemma}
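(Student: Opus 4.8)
The plan is to obtain the positive inclusion $U\oplus(U\oplus U)\subseteq W$ from mere continuity of $\oplus$ at $(0,0)$, and then to push it to the inverse side with two applications of Lemma \ref{lem19}, using Lemma \ref{lem2.13s} to keep all intermediate sets inside the witnessing base $\mathscr{U}$.

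First I would exploit that $\oplus\colon G\times G\to G$ is continuous and $0\oplus 0=0$: given $W\in\mathscr{U}$, there is a neighborhood of $0$ whose $\oplus$-square is contained in $W$, and since $\mathscr{U}$ is a neighborhood base at $0$, one may choose $W_1\in\mathscr{U}$ with $W_1\oplus W_1\subseteq W$. Applying the same reasoning to $W_1$, pick $U\in\mathscr{U}$ with $U\oplus U\subseteq W_1$. Because $0\in U$, the identity law gives $U=U\oplus\{0\}\subseteq U\oplus U\subseteq W_1$, so by monotonicity of $\oplus$ we get $U\oplus(U\oplus U)\subseteq W_1\oplus W_1\subseteq W$. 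Note in addition that $U\oplus U\in\mathscr{U}$ by Lemma \ref{lem2.13s}.

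For the inverse side, put $V:=U\oplus U\in\mathscr{U}$. From $U\oplus V\subseteq W$ and Lemma \ref{lem19} we obtain $\ominus V\ominus U\subseteq\ominus W$, that is, $\ominus(U\oplus U)\ominus U\subseteq\ominus W$. Applying Lemma \ref{lem19} once more, now to the trivial inclusion $U\oplus U\subseteq U\oplus U$ (legitimate since $U\oplus U\in\mathscr{U}$), yields $\ominus U\ominus U\subseteq\ominus(U\oplus U)$. Since $X\mapsto X\ominus U$ is monotone, combining the last two inclusions gives $(\ominus U\ominus U)\ominus U\subseteq\ominus(U\oplus U)\ominus U\subseteq\ominus W$, which is the second required inclusion.

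The only point demanding care is the bookkeeping of the left-associated parenthesization, so that each expression has exactly the shape $\ominus V\ominus U$ required by Lemma \ref{lem19}; this forces the repeated use of Lemma \ref{lem2.13s} to certify that sets like $U\oplus U$ lie in $\mathscr{U}$. The strongly-paratopological hypothesis is used only implicitly, through these two lemmas.
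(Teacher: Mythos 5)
Your proof is correct and follows essentially the paper's route: continuity of $\oplus$ used twice to place a triple product inside $W$, followed by two applications of Lemma \ref{lem19} to transfer the inclusion to the inverse side. The only (cosmetic) difference is that the paper applies Lemma \ref{lem19} to the nested inclusions $U_1\oplus V\subseteq W$ and $V_1\oplus V_1\subseteq V$ and then takes $U=U_1\cap V_1$, while you apply it to $U\oplus(U\oplus U)\subseteq W$ and to the trivial inclusion $U\oplus U\subseteq U\oplus U$ (legitimated by Lemma \ref{lem2.13s}); incidentally, your choice keeps $U\in\mathscr{U}$ literally, which the paper's $U=U_1\cap V_1$ strictly speaking need not.
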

\begin{proof}
Since operator $\oplus$ is continuous in $G$, for each $W\in\mathscr{U}$
 we can find neighbourhoods $U_1, V$ of 0 such that
$U_1\oplus V \subseteq W$. And for $V\in\mathscr{U}$
 there exists an open neighbourhood $V_1$ of 0 such that
$V_1\oplus V_1\subseteq V$. So $\ominus V\ominus U_1 \subseteq \ominus W$ and $\ominus V_1\ominus V_1\subseteq \ominus V$
by Lemma \ref{lem19}.
Let $U=U_1\cap V_1$. Thus we can get $U\oplus (U\oplus U) \subseteq W$ and $(\ominus U\ominus U)\ominus U\subseteq\ominus W$.
\end{proof}

\begin{lemma}\label{lem2.13}
Let the neighborhood base
$\mathscr{U}$ at 0 of $G$ witness that $G$ is a strongly paratopological gyrogroup.
If $V\oplus V\subseteq U$ where $U, V\in \mathscr{U}$,
then $\ominus(\overline{\ominus V})\subseteq U$.
\end{lemma}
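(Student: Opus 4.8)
\emph{Proof proposal.} The plan is to establish the equivalent inclusion $\overline{\ominus V}\subseteq\ominus U$ and then apply the involution $\ominus$, so that $\ominus(\overline{\ominus V})\subseteq\ominus(\ominus U)=U$. Accordingly I fix $x\in\overline{\ominus V}$ and aim to prove $\ominus x\in U$.

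The topology is used only once. Since $0\in V$ and left translations are homeomorphisms, $x\oplus V$ is a neighbourhood of $x$ (it contains $x\oplus W$ for any open $W$ with $0\in W\subseteq V$, and $x=x\oplus 0$); see Proposition \ref{pro23}. As $x\in\overline{\ominus V}$, this neighbourhood meets $\ominus V$, so I obtain $v,w\in V$ with
$$x\oplus v=\ominus w .$$
Everything after this point is gyrogroup algebra. Taking inverses and invoking Gyrosum Inversion (Theorem \ref{the1.3}(7)) gives $w=\ominus(x\oplus v)=\text{gyr}[x,v](\ominus v\ominus x)$, and then Inversive symmetry (Theorem \ref{the1.3}(9)) yields $\ominus v\ominus x=\text{gyr}[v,x](w)$. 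Put $w'=\text{gyr}[v,x](w)$. Because the base $\mathscr{U}$ witnesses that $G$ is strongly paratopological, $\text{gyr}[v,x](V)=V$, hence $w'\in V$. From $(\ominus v)\oplus(\ominus x)=w'$ and Theorem \ref{the1.3}(3) I then read off $\ominus x=v\oplus w'\in v\oplus V\subseteq V\oplus V\subseteq U$, which is exactly what was wanted; since $x\in\overline{\ominus V}$ was arbitrary, $\ominus(\overline{\ominus V})\subseteq U$.

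The only genuinely delicate point is the passage from $x\oplus v=\ominus w$ to a usable expression for $\ominus x$: as $G$ is not associative this is not a one-step cancellation, and it is precisely here that the strong hypothesis is used, guaranteeing that the gyrated element $\text{gyr}[v,x](w)$ stays inside $V$ so that $v\oplus w'$ remains in $V\oplus V\subseteq U$. If one prefers to work with the cooperation instead, the same conclusion follows by rewriting $x\oplus v=\ominus w$ as $x=(\ominus w)\boxminus v$ via the coaddition cancellation contained in Theorem \ref{the1.3}, and then applying Lemma \ref{lem17} and Lemma \ref{lem19} to land in $(\ominus V)\oplus(\ominus V)\subseteq\ominus U$; either route is short. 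Beyond this, nothing further is needed.
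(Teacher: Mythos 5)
Your proposal is correct and follows essentially the same route as the paper: the identical topological step ($x\oplus V$ is a neighbourhood of $x$, hence meets $\ominus V$, giving $x\oplus v=\ominus w$ with $v,w\in V$), followed by gyrogroup algebra landing in $V\oplus V\subseteq U$. Your main argument merely inlines the content of Lemmas \ref{lem17} and \ref{lem19} via Theorem \ref{the1.3}(7),(9),(3) and the gyr-invariance of $V$, while the alternative you sketch via $x=(\ominus w)\boxminus v$ together with Lemmas \ref{lem17} and \ref{lem19} is precisely the paper's proof.
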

\begin{proof}
We show that $\overline{\ominus V}\subseteq \ominus U$.
 Let $x\in \overline{\ominus V}$.  Then $(x\oplus V)\cap(\ominus V)\neq\emptyset$.
 Therefore there exist $v_1, v_2\in V$ such that $x\oplus v_1=\ominus v_2$ and $x=
 \ominus v_2 \boxminus v_1\in\ominus V \boxminus V
\subseteq \ominus V \ominus V$
 by Lemma \ref{lem17}.
For $\ominus V \ominus V\subseteq \ominus U$ by Lemma \ref{lem19}, we get $x\in \ominus U$.
\end{proof}

Let $(G, \tau,\oplus)$ be a paratopological gyrogroup and $H$ a $L$-subgyrogroup of $G$.
 It follows from \cite[Theorem 20]{Suk3} that $G/H =\{a\oplus H: a\in G\}$ is a partition of $G$. We denote
by $\pi$ the mapping $a\mapsto a\oplus H$ from $G$ onto $G/H$. Clearly, for each $a\in G$,
we have $\pi^{-1}(\pi(a))= a\oplus H$,
for each $a \in G$. Denote by $\tau(G)$ the topology of $G$. In the left cosets
$G/H$ of the gyrogroup $G$, we define a topology $\widetilde{\tau}=\tau(G/H)$ of subsets as follows:
$$\widetilde{\tau}=\tau(G/H) = \{O\subseteq G/H : \pi^{-1}(O)\in\tau(G)\}.$$

A continuous mapping $f:X\rightarrow Y$ is {\it perfect} if $f$ is a closed mapping and all fibers
$f^{-1}(y)$ are compact subsets of $X$.

\begin{proposition}\label{pro2.12}
Let $(G, \tau,\oplus)$ be a paratopological gyrogroup and $H$ a $L$-subgyrogroup of $G$.
Then the natural homomorphism
$\pi$ from a paratopological gyrogroup $G$ to its quotient topology on $G/H$ is an open and continuous mapping.
\end{proposition}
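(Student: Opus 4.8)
The plan is to prove the two assertions separately: continuity is essentially formal, and the real content lies in openness.

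\medskip
\noindent\emph{Continuity.} This is immediate from the definition of the quotient topology. Since $\widetilde{\tau}=\{O\subseteq G/H:\pi^{-1}(O)\in\tau(G)\}$, the preimage under $\pi$ of any $\widetilde{\tau}$-open set is by definition open in $G$, so $\pi$ is continuous. Nothing beyond the fact that $\pi$ is a well-defined map onto $G/H$ is used here.

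\medskip
\noindent\emph{Openness.} Let $U\in\tau(G)$; we must show $\pi(U)\in\widetilde{\tau}$, i.e.\ that the saturation $\pi^{-1}(\pi(U))$ is open in $G$. First I would compute this set. Since $\pi$ is onto and $\pi^{-1}(\pi(a))=a\oplus H$ for every $a\in G$ (as recalled just before the statement, via \cite[Theorem 20]{Suk3}), we have $\pi^{-1}(\pi(U))=\bigcup_{a\in U}(a\oplus H)$. Because $G/H$ is a partition of $G$, a point $y\in G$ lies in this union exactly when the coset $y\oplus H$ meets $U$, that is, when $y\oplus h\in U$ for some $h\in H$. Hence
$$\pi^{-1}(\pi(U))=\{y\in G:(y\oplus H)\cap U\neq\emptyset\}=\bigcup_{h\in H}R_h^{-1}(U),$$
where $R_h: G\to G$ denotes the right translation $R_h(x)=x\oplus h$ and $R_h^{-1}(U)$ its preimage.

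\medskip
The decisive point — and the step where I expect a naive approach to break down — is that the saturation should be written through the \emph{preimages} $R_h^{-1}(U)$, not through the images $R_h(U)$. In a paratopological gyrogroup the inversion is not assumed continuous, so, unlike in a paratopological group, $R_h$ need not be an open map and $R_h(U)$ need not be open; but $R_h$ is certainly continuous, because $\oplus$ is jointly continuous. Therefore each $R_h^{-1}(U)$ is open in $G$, and so is $\pi^{-1}(\pi(U))=\bigcup_{h\in H}R_h^{-1}(U)$ as a union of open sets. This shows $\pi(U)\in\widetilde{\tau}$, so $\pi$ is open, completing the proof. The only role the hypotheses play is to guarantee that $G/H$ is a genuine partition with $\pi^{-1}(\pi(a))=a\oplus H$, which is exactly what being an $L$-subgyrogroup supplies; no gyration identities are needed beyond those already encoded in the coset decomposition.
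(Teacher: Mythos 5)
Your proof is correct, and both parts (continuity from the definition of $\widetilde{\tau}$, openness via the saturation $\pi^{-1}(\pi(U))$) follow the same overall strategy as the paper's proof. The difference is in how the openness of the saturation is justified: the paper simply writes $\pi^{-1}(\pi(U))=U\oplus H$ and concludes ``hence $\pi(U)$ is open,'' leaving implicit why $U\oplus H$ is open, whereas you rewrite the same set as $\bigcup_{h\in H}R_h^{-1}(U)$ and invoke only the continuity of the right translations $R_h$. This is exactly the justification the paper's one-liner needs: in a paratopological gyrogroup only left translations are known to be homeomorphisms (Proposition \ref{pro23}), and the inverse of $R_h$ is $x\mapsto x\boxminus h=x\oplus\bigl(\ominus(x\oplus h)\oplus x\bigr)$, which involves the (possibly discontinuous) inversion, so one cannot argue that each $U\oplus h$ is open as one would in a paratopological group; also Proposition \ref{pro23}(3) only covers $B\oplus A$ with $A$ open in the second slot, not $U\oplus H$ with the open set in the first slot. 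Your identification of the saturation with a union of \emph{preimages} under right translations, which rests on the coset partition supplied by the $L$-subgyrogroup hypothesis, is therefore not a detour but the precise point that makes the paper's ``hence'' valid; your version is in this respect more complete than the published proof.
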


\begin{proof}
The continuity of the map $\pi$ is obvious. If $U\subseteq G$ is an open set then $\pi^{-1}(\pi(U))=U\oplus H$ and
hence $\pi(U)$ is open.
\end{proof}

\begin{proposition}\label{pro27s}
Let $(G, \tau,\oplus)$ be a paratopological gyrogroup and $H$ a $L$-subgyrogroup of $G$. If $H$ is a compact subgyrogroup
of $G$, then the quotient mapping $\pi$ of $G$ onto the quotient space $G/H$ is perfect.
\end{proposition}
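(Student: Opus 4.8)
The plan is to establish the two requirements of perfectness separately: the compactness of the fibres, which is routine, and the closedness of $\pi$, which carries the content. For the fibres, note that for every $a\in G$ one has $\pi^{-1}(\pi(a))=a\oplus H=L_a(H)$; since $L_a$ is a homeomorphism by Proposition \ref{pro23}(1) and $H$ is compact, every fibre of $\pi$ is compact. Continuity of $\pi$ is Proposition \ref{pro2.12}, so it only remains to prove that $\pi$ is a closed map.

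To that end, let $A$ be a closed subset of $G$. By the definition of the quotient topology $\widetilde{\tau}$, the set $\pi(A)$ is closed in $G/H$ if and only if $\pi^{-1}(\pi(A))$ is closed in $G$; and since the left cosets $\{c\oplus H:c\in G\}$ form a partition of $G$ (by \cite[Theorem 20]{Suk3}, as $H$ is an $L$-subgyrogroup), one checks that $\pi^{-1}(\pi(A))=A\oplus H$. So the goal reduces to showing that $A\oplus H$ is closed. Fix $x\in G\setminus(A\oplus H)$. First observe that the fibre $x\oplus H$ is disjoint from $A$: if some $y$ lay in $(x\oplus H)\cap A$, then $y\oplus H=x\oplus H$ by the partition property, whence $x\in x\oplus H=y\oplus H\subseteq A\oplus H$, a contradiction.

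Now I would invoke the tube lemma for the continuous map $m\colon G\times H\to G$ given by $m(g,h)=g\oplus h$ (the restriction of $\oplus$). Since $A$ is closed, $N:=(G\times H)\setminus m^{-1}(A)$ is open, and it contains the slice $\{x\}\times H$ because $m(\{x\}\times H)=x\oplus H$ misses $A$. Compactness of $H$ then yields an open neighbourhood $W$ of $x$ in $G$ with $W\times H\subseteq N$, that is, $(W\oplus H)\cap A=\emptyset$. Finally I claim $W\cap(A\oplus H)=\emptyset$: if some $w$ lay in both, then $w\in a\oplus H$ for some $a\in A$, so $w\oplus H=a\oplus H$ by the partition property, hence $a\in a\oplus H=w\oplus H\subseteq W\oplus H$, contradicting $(W\oplus H)\cap A=\emptyset$. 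Thus $W$ is a neighbourhood of $x$ disjoint from $A\oplus H$, so $A\oplus H=\pi^{-1}(\pi(A))$ is closed, $\pi(A)$ is closed, and $\pi$ is perfect. I expect the only real care needed is the two appeals to the partition structure of $G/H$ — which is precisely where the $L$-subgyrogroup hypothesis enters — together with the choice to produce the tube $W$ from the tube lemma applied to $m$ rather than from any translation argument, the latter being unavailable here since gyrations need not be continuous in a paratopological gyrogroup.
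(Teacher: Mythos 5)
Your proof is correct, but it takes a genuinely different route from the paper's. The paper fixes a closed set $F$ and a point $\widetilde{x}\notin\pi(F)$, observes that the compact fibre $x\oplus H$ lies in the open set $G\setminus F$, applies Proposition \ref{pro2.11} to get a neighbourhood $U$ of $0$ with $(U\oplus(x\oplus H))\cap F=\emptyset$, and then uses openness of $\pi$ (Proposition \ref{pro2.12}) to exhibit $\pi(U\oplus x)$ as a neighbourhood of $\widetilde{x}$ missing $\pi(F)$; the fibres are handled exactly as you do. You instead saturate the closed set, reduce closedness of $\pi(A)$ to closedness of $A\oplus H$ via the definition of the quotient topology, and obtain the separating tube $W\times H$ directly from the tube lemma applied to the restriction of $\oplus$ to $G\times H$, with the coset partition (the $L$-subgyrogroup hypothesis) as the only algebraic input. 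Both arguments are at bottom ``compactness plus a tube argument'' (Proposition \ref{pro2.11} is itself proved by one), but your version buys something concrete: the paper's final step needs $\pi(U\oplus x)$ to miss $\pi(F)$, which implicitly compares $(u\oplus x)\oplus H$ with $u\oplus(x\oplus H)$ and hence, by the right gyroassociative law, involves $\text{gyr}[x,u](H)$ for arbitrary $u\in U$ --- not something the $L$-subgyrogroup condition hands over directly --- whereas your argument never has to move $H$ past a translation. One small correction to your closing remark: gyrations \emph{are} homeomorphisms in a paratopological gyrogroup (Proposition \ref{pro23s}); the real reason to be wary of translation-style arguments here is the discontinuity of the inverse and the coset/gyration bookkeeping just described, not any failure of continuity of $\text{gyr}[x,y]$.
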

\begin{proof}
Let $F$ be a closed subset of the gyrogroup $G$. Let $\widetilde{x}\in G/H\setminus \pi(F)$.
Consider an arbitrary point $x\in \pi^{-1}(\widetilde{x})$. Then $(x\oplus H)\cap F=\emptyset$.
By Proposition \ref{pro2.11} there exists an open neighborhood $U$ of the unit such that $(U\oplus(x\oplus H))\cap F=\emptyset$.
Then $\widetilde{x}\in\pi(U\oplus x)$ and $\pi(U\oplus x)=\pi(U)\oplus \pi(x)=\pi(U)\oplus(\pi(x)\oplus \pi(H))=\pi(U\oplus(x\oplus H))\cap\pi(F)=\emptyset$ thus the map $\pi$ is closed.  Furthermore, if $y\in G/H$
and $\pi(x)=y$ for some $x\in G$, we obtain that $\pi^{-1}
(y)=x\oplus H$ is a compact subset of $G$. Hence, the fibers of $\pi$
are compact. Thus $\pi$ is perfect.
\end{proof}
\section{compact (strongly) paratopological gyrogroups and locally compact (strongly) paratopological gyrogroups}


We give the first of our non-trivial claims on the inverse mapping's continuity in paratopological gyrogroups.

\begin{theorem}\label{the3.2}
A compact Hausdorff paratopological gyrogroup $G$ is a topological gyrogroup.
\end{theorem}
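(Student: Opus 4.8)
The plan is to mimic Ravsky's argument that a compact paratopological group is a topological group, adapted to the gyrogroup setting. The key point is to produce a topology on $G$ making it a compact Hausdorff \emph{topological} gyrogroup with a weaker topology, and then invoke compactness to conclude the two topologies coincide. Concretely, first I would consider the family $\mathscr{U}$ of all open neighbourhoods of the identity $0$ and form the set $B=\bigcap\{\overline{U}:U\in\mathscr{U}\}$. Since $G$ is Hausdorff, $0$ is closed, and one hopes to show $B=\{0\}$; the tool for this is Proposition~\ref{pro22}, which gives $\overline{V}\oplus\overline{V}\subseteq\overline{U}$ for a suitable $V$, together with Lemma~\ref{lem25}, which tells us $\mathrm{gyr}[a,b](B)=B$ for all $a,b$. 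The set $B$ is a compact subgyrogroup (it is closed in the compact space $G$, it is gyro-invariant, and from $\overline{V}\oplus\overline{V}\subseteq\overline{U}$ one deduces $B\oplus B\subseteq B$ and $\ominus B\subseteq B$), and I would argue that the only compact subgyrogroup that can arise from the closures of \emph{all} identity neighbourhoods of a Hausdorff space is trivial — this is where the Hausdorff hypothesis is essential.

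Once $B=\{0\}$ is established, the strategy is to show the inverse map $x\mapsto\ominus x$ is continuous by a closed-graph / net argument exploiting compactness. Suppose $x_\alpha\to x$ in $G$; by compactness the net $\ominus x_\alpha$ has a subnet converging to some $y\in G$, and by joint continuity of $\oplus$ we get $x_\alpha\oplus(\ominus x_\alpha)=0\to x\oplus y$, hence $x\oplus y=0$, so $y=\ominus x$. Since every subnet of $\ominus x_\alpha$ has a further subnet converging to $\ominus x$ (the unique inverse of $x$), the whole net $\ominus x_\alpha$ converges to $\ominus x$, which gives continuity of inversion — \emph{provided} $G$ is Hausdorff so that limits are unique. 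This last point is exactly why we needed $B=\{0\}$: without Hausdorffness the subnet limit $y$ need only satisfy $x\oplus y=0$, which pins down $y$ as $\ominus x$ regardless; but the passage "every subnet has a further subnet converging to $\ominus x$, therefore the net converges to $\ominus x$" requires uniqueness of limits, i.e.\ the Hausdorff property, which we have by hypothesis.

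Actually, on reflection, the compactness-plus-Hausdorff route can be streamlined: the graph of the inversion map $\{(x,\ominus x):x\in G\}$ equals $\{(x,y)\in G\times G: x\oplus y=0\}=\oplus^{-1}(\{0\})$, which is closed in the compact Hausdorff space $G\times G$ because $\oplus$ is continuous and $\{0\}$ is closed. A closed-graph map from a space into a compact Hausdorff space is automatically continuous (the standard closed-graph theorem for maps into compact Hausdorff spaces). So inversion is continuous, and $G$ is a topological gyrogroup. I expect the main obstacle to be the first half — rigorously showing that the compact gyro-invariant subgyrogroup $B$ collapses to $\{0\}$ in the Hausdorff case — although if one uses the closed-graph formulation of the second half, the argument may not need $B=\{0\}$ at all, only the Hausdorffness of $G$; I would check carefully whether the closed-graph theorem I want is available without further separation on the domain, and if not, fall back on the net argument together with the identification $B=\{0\}$. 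The gyrogroup-specific facts used — that $\mathrm{gyr}[a,b]$ is a homeomorphism fixing $0$ (Proposition~\ref{pro23s}), the gyroautomorphism identities of Theorem~\ref{the1.3}, and the behaviour of $B$ under gyrations (Lemma~\ref{lem25}) — are exactly what let the classical paratopological-group proof go through verbatim.
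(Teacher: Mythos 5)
Your streamlined closed-graph argument is essentially the paper's own proof: the paper takes the closed set $M=\{(x,y)\in G\times G: x\oplus y=0\}$, intersects it with $G\times F$ for an arbitrary closed $F$, and projects to conclude that $\ominus F$ is compact, hence closed, so inversion is continuous. The preliminary detour through $B=\bigcap\{\overline{U}:U\in\mathscr{U}\}$ and the net argument are unnecessary (as you yourself suspect) — the projection/closed-graph step needs only the Hausdorffness of $G$ to make $M$ closed and compact subsets closed, with no further separation hypothesis on the domain.
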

\begin{proof}
Let 0 be the neutral element of $G$. Since $G$ is Hausdorff, the set
$M=\{(x,y)\in G\times G:x\oplus y=0\}$ is closed in $G\times G$.

Let $F$ be any closed subset of $G$, and $P=(G\times F)\cap M$.
Then $F$ and $G\times F$ are compact, $P$ closed in $G\times F$, since $M$ is closed, and, therefore, $P$ is compact.
It is true that $(x, y)\in P$ if and only if $y\in F$ and $x\oplus y=0$, that is, $x=\ominus y$.
It follows that the image of $P$ under the natural projection of $G\times G$ onto the first factor $G$ is precisely $\ominus F$.
Since $P$ is compact and the projection mappings are continuous, we conclude that $\ominus F$ is compact, and
therefore, closed in $G$. Thus, the inverse operation in $G$ is continuous. Hence $G$ is a topological gyrogroup.
\end{proof}

It is natural to extend Theorem \ref{the3.2} to locally compact Hausdorff paratopological gyrogroups,
We pose the following problem.
\begin{question}
Is a locally compact Hausdorff paratopological gyrogroup $G$ with a countable base a topological gyrogroup?
\end{question}

Indeed, Theorem \ref{the3.2} can be extended to locally compact Hausdorff strongly paratopological gyrogroups with a slightly more involved argument.

\begin{lemma}\label{lem20}
Let the neighborhood base
$\mathscr{U}_1$ at 0 of $G$ witness that $G$ is a strongly paratopological gyrogroup and a family $\mathscr{U}=\{U_n: n\in \omega\}\subseteq \mathscr{U}_1$, and $\{x_n: n\in \omega\}$ is a sequence of
points in $G$ such that $x_n\in U_n$ for each $n\in \omega$, and the next conditions are satisfied:
\begin{enumerate}
\item[(1)] $\overline{U_{n+1}\oplus U_{n+1}}\subseteq U_n$ for each $n\in \omega$;
\item[(2)] the sequence $\{y_k:k\in\mathbb{N}\}$, where $y_k=(((x_1\oplus x_2)\oplus x_3)\oplus\cdot\cdot\cdot\oplus x_{k-1})\oplus x_k$,
has an accumulation point $y$ in $G$.
\end{enumerate}
Then there exists $k\in \omega$ such that $\ominus x_{k+1}\in U_0$.
\end{lemma}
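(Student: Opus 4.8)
I want to show that under the stated hypotheses the partial sums $y_k$ "stabilize" in the sense that the tail increments become invertibly small, forcing $\ominus x_{k+1}\in U_0$ for some $k$. The natural route is to exploit the accumulation point $y$ together with the telescoping nested structure of the $U_n$'s. The key algebraic observation is that, because of the left gyroassociative law and the fact that $\mathscr{U}$ witnesses the strongly paratopological property (so gyrations fix each $U_n$), the partial sums behave much like partial sums in a topological group: one can peel off a tail $x_{k+1}\oplus x_{k+2}\oplus\cdots\oplus x_{m}$ and relate it back to the difference of two partial sums $y_k$ and $y_m$ via $\ominus y_k \oplus y_m$, up to gyrations that land back in the $U_n$'s by Lemma~\ref{lem2.13s} and the invariance hypothesis.

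First I would record the "tail control" estimate: using condition~(1) and Lemmas~\ref{lem2.12}, \ref{lem2.13s} and \ref{lem18}, show by induction that for all $m>k\ge 1$ one has $x_{k+1}\oplus x_{k+2}\oplus\cdots\oplus x_m \in \overline{U_k}$ — indeed the increments $x_j\in U_j$ with $j\ge k+1$, and the telescoping $\overline{U_{n+1}\oplus U_{n+1}}\subseteq U_n$ lets one absorb successive terms, the point being that $\overline{U_{k+1}}\oplus\overline{U_{k+1}}\subseteq U_k$, then $\overline{U_{k+2}}\oplus(\overline{U_{k+2}}\oplus\overline{U_{k+2}})\subseteq U_{k+1}$, etc., while Lemma~\ref{lem18} reassociates and the invariance of the $U_n$ under gyrations keeps everything inside the chain. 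Equivalently, $\ominus y_k \oplus y_m \in \overline{U_k}$ for all $m>k$, using Theorem~\ref{the1.3}(3) and the strongly paratopological property to discard the gyrations introduced when rewriting $y_m$ as $y_k\oplus(\text{tail})$.

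Next I would use the accumulation point. Fix $U_0$; choose (by continuity of $\oplus$ and of $\ominus$ at $0$, plus Lemma~\ref{lem2.13}) an index $k$ large enough that $\overline{\ominus U_k}\subseteq U_0$ and such that $U_k$ is "small" relative to $y$. Since $y$ is an accumulation point of $\{y_k\}$, the neighborhood $y\oplus U_{k+1}$ (translated appropriately) contains $y_m$ for infinitely many $m$, in particular for some $m>k+1$ and also — by choosing $k$ itself among those indices, which is legitimate since $x_k\in U_k$ and we may re-index — one gets $\ominus y_k\oplus y_m\in \overline{U_{k+1}}\oplus\overline{U_{k+1}}\subseteq U_k$ for some $m>k$. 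Combining with the tail estimate, $x_{k+1}\oplus(\text{rest of tail}) = \ominus y_k\oplus y_m$ lies in $U_k$; peeling off the rest of the tail (which lies in $\overline{U_{k+2}\oplus U_{k+2}}\subseteq U_{k+1}$) via Lemma~\ref{lem19} gives $\ominus x_{k+1}\in \ominus(\text{rest})\ominus(\ominus y_k\oplus y_m)\ \subseteq\ \ominus U_{k+1}\ominus U_k$, which by the nesting and Lemma~\ref{lem19} again sits inside $\ominus U_{k-1}\subseteq\cdots\subseteq \overline{\ominus U_k}\subseteq U_0$. Hence $\ominus x_{k+1}\in U_0$.

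**Main obstacle.** The delicate point is bookkeeping the gyrations: every time one reassociates a partial sum $y_m$ as $y_k\oplus(x_{k+1}\oplus\cdots\oplus x_m)$ or splits off tail terms, the gyroassociative laws (Theorem~\ref{the1.3}(1) and Definition~\ref{Def:gyr}(G3)) introduce gyroautomorphisms $\mathrm{gyr}[\cdot,\cdot]$ applied to the pieces. The whole argument only works because $\mathscr{U}$ witnesses the \emph{strongly} paratopological structure, so $\mathrm{gyr}[a,b](U_n)=U_n$ for every $n$ and all $a,b\in G$ (this is exactly why the theorem is stated for strongly paratopological gyrogroups). I expect the technical heart of the proof to be setting up the induction in the tail estimate so that at each stage the term being absorbed, after the relevant gyration, is still a subset of the appropriate $U_n$ — i.e., carefully invoking Lemmas~\ref{lem2.13s}, \ref{lem18}, and \ref{lem19} in the right order — rather than any conceptual difficulty. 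The use of $\overline{U_{n+1}\oplus U_{n+1}}\subseteq U_n$ (with closures) is what lets accumulation points, as opposed to limits, be enough.
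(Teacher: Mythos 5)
Your tail estimate (that $\ominus y_{k+1}\oplus y_m\in U_{k+2}\oplus\cdots\oplus U_m\subseteq U_{k+1}$ for $m>k+1$, via Lemmas \ref{lem2.13s}, \ref{lem18} and condition (1)) is correct and is indeed the first half of the paper's argument. But the rest of the proposal has a genuine gap. The step ``choose $k$ large enough that $\overline{\ominus U_k}\subseteq U_0$'', justified ``by continuity of $\oplus$ and of $\ominus$ at $0$, plus Lemma \ref{lem2.13}'', is circular: in a paratopological gyrogroup the inversion is \emph{not} continuous, and this lemma is used in Theorem \ref{the3.6} precisely in a situation where $\ominus x_n\notin U_0$, i.e.\ $\ominus U_n\not\subseteq U_0$, for every $n$; Lemma \ref{lem2.13} only gives $\ominus(\overline{\ominus V})\subseteq U$, not $\overline{\ominus V}\subseteq U$. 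For the same reason your final chain $\ominus x_{k+1}\in\ominus U_{k+1}\ominus U_k\subseteq\ominus U_{k-1}\subseteq\cdots\subseteq U_0$ proves nothing: it only places $\ominus x_{k+1}$ in sets of the form $\ominus(\text{small set})$, which are exactly the uncontrolled objects (and the nesting even runs the wrong way, since $\ominus U_k\subseteq\ominus U_{k-1}$). A further flaw: from $y_k,y_m\in y\oplus U_{k+1}$ you cannot conclude $\ominus y_k\oplus y_m\in\overline{U_{k+1}}\oplus\overline{U_{k+1}}$ — unwinding it requires inverting elements of $U_{k+1}$, again unavailable; and in any case that conclusion already follows from the tail estimate alone, so your argument never uses hypothesis (2) in an essential way, although the lemma is false without it.

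What is actually needed (and what the paper does) is to arrange matters so that only a ``backwards difference'' of partial sums appears, and to control it by the accumulation point rather than by any set $\ominus U_n$. Choose $k$ with $y_k\in y\oplus U_1$; from $x_{k+1}=\ominus y_k\oplus y_{k+1}$ and gyrosum inversion (Theorem \ref{the1.3}(7)) one gets $\ominus x_{k+1}=\text{gyr}[\ominus y_k,y_{k+1}](\ominus y_{k+1}\oplus y_k)\in z\oplus U_1$, where $z=\text{gyr}[\ominus y_k,y_{k+1}](\ominus y_{k+1}\oplus y)$, using gyroassociativity and the invariance of $U_1$ under gyrations. The accumulation point then enters where it genuinely matters: $z$ is an accumulation point of the points $\text{gyr}[\ominus y_k,y_{k+1}](\ominus y_{k+1}\oplus y_m)$, which by your tail estimate and invariance lie in $U_{k+1}$, so $z\in\overline{U_{k+1}}\subseteq U_k$ and hence $\ominus x_{k+1}\in U_k\oplus U_1\subseteq U_0$. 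Note that no set of the form $\ominus U_n$ occurs anywhere in this argument; your proposal would need to be restructured along these lines to close the gap.
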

\begin{proof}
Since $y\oplus U_1$ is a neighbourhood of $y$, there exists $k\in \mathbb{N}$ such that $y_k\in y\oplus U_1$. Put
$z=\text{gyr}[\ominus y_{k},y_{k+1}](\ominus y_{k+1}\oplus y)$. For $y_{k+1}=y_{k}\oplus x_{k+1}$, we can get $x_{k+1}=\ominus y_{k}\oplus y_{k+1}$.
Thus
\begin{align*}
&\ominus x_{k+1}=\ominus(\ominus y_{k}\oplus y_{k+1})
\\&=\text{gyr}[\ominus y_{k},y_{k+1}](\ominus y_{k+1}\oplus y_k)\quad \text{by Theorem \ref{the1.3} (7)~}
\\&\in\text{gyr}[\ominus y_{k},y_{k+1}](\ominus y_{k+1}\oplus (y\oplus U_1))
\\&=\text{gyr}[\ominus y_{k},y_{k+1}]((\ominus y_{k+1}\oplus y)\oplus (\text{gyr}[\ominus y_{k+1},y]U_1)) \quad \text{by Definition \ref{Def:gyr}~}
\\&\subseteq\text{gyr}[\ominus y_{k},y_{k+1}]((\ominus y_{k+1}\oplus y)\oplus U_1)\quad \quad\quad\quad\text{by Definition \ref{defst}~}
\\&=\text{gyr}[\ominus y_{k},y_{k+1}](\ominus y_{k+1}\oplus y)\oplus \text{gyr}[\ominus y_{k},y_{k+1}](U_1)
\\&=\text{gyr}[\ominus y_{k},y_{k+1}](\ominus y_{k+1}\oplus y)\oplus U_1\quad \quad\quad\quad\text{by Definition \ref{defst}~}
\\&=z\oplus U_1.
\end{align*}
Since by condition (2) the sequence $\{y_m:m\in\mathbb{N}\}$
has an accumulation point $y$ in $G$, for $k\in \mathbb{N}$
the sequence $\{\text{gyr}[\ominus y_{k},y_{k+1}](\ominus y_{k+1}\oplus y_m):m\in\mathbb{N}\}$
has an accumulation point $\text{gyr}[\ominus y_{k},y_{k+1}](\ominus y_{k+1}\oplus y)=z$ in $G$ by Propositions \ref{pro23} and \ref{pro23s}.

For each $m>k+2$,
$y_m=y_{k+1}\oplus x_{k+2}\oplus\cdot\cdot\cdot\oplus x_m$.

Since the neighborhood base
$\mathscr{U}_1$ at 0 of $G$ witness that $G$ is a strongly paratopological gyrogroup and the family $\mathscr{U}\subseteq \mathscr{U}_1$, we have

\begin{align*}
&y_m\in (((y_{k+1}\oplus U_{k+2})\oplus U_{k+3}) \oplus\cdot\cdot\cdot\oplus U_{m-1})\oplus U_m
\\&=(((y_{k+1}\oplus U_{k+2})\oplus U_{k+3}) \oplus\cdot\cdot\cdot\oplus U_{m-2})\oplus (U_{m-1}\oplus U_m) \quad\quad\text{by Lemma \ref{lem18}}
\\&=(((y_{k+1}\oplus U_{k+2})\oplus U_{k+3}) \oplus\cdot\cdot\cdot\oplus U_{m-3})\oplus (U_{m-2}\oplus(U_{m-1}\oplus U_m)) ~\text{by Lemmas \ref{lem2.13s}, \ref{lem18}}
\\&=(((y_{k+1}\oplus U_{k+2})\oplus U_{k+3}) \oplus\cdot\cdot\cdot\oplus U_{m-3})\oplus ((U_{m-2}\oplus U_{m-1})\oplus U_m) \quad\quad\text{by Lemma \ref{lem18}}
\\& \cdot\cdot\cdot\cdot\cdot\cdot
\\&= y_{k+1}\oplus (U_{k+2}\oplus\cdot\cdot\cdot\oplus U_m).
\end{align*}

It follows from condition (1) of the lemma that,
$$\ominus y_{k+1}\oplus y_{m}\in U_{k+2}\oplus\cdot\cdot\cdot\oplus U_m\subseteq U_{k+1}.$$
Therefore, $\text{gyr}[\ominus y_{k},y_{k+1}](\ominus y_{k+1}\oplus y_m)\in\text{gyr}[\ominus y_{k},y_{k+1}](U_{k+1})
\subseteq U_{k+1}$. So we can get
$z\in \overline{U_{k+1}}\subset U_k$, which implies that
$$\ominus x_{k+1}\in z\oplus U_1\subseteq U_k\oplus U_1\subseteq U_0.$$
This finishes the proof.
\end{proof}

\begin{lemma}\label{pro3.1}
Let the neighborhood base
$\mathscr{U}$ at 0 of $G$ witness that $G$ is a strongly paratopological gyrogroup.
Then $G$ is a strongly topological gyrogroup if and only if the inverse operation is continuous at the identity 0.
\end{lemma}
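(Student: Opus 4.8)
The forward implication is immediate: a strongly topological gyrogroup is a topological gyrogroup, so its inversion $x\mapsto\ominus x$ is continuous everywhere, in particular at $0$. For the converse, assume the inversion is continuous at $0$. Since $\mathscr U$ witnesses that $G$ is strongly paratopological, it will suffice to show that the inversion is continuous at an arbitrary point $a\in G$: then $G$ is a topological gyrogroup, and the same base $\mathscr U$ witnesses the gyration-invariance, so $G$ is a strongly topological gyrogroup.

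Two preliminary observations will carry the argument. First, since the inversion is a continuous involution fixing $0$, the family $\{\ominus V:V\in\mathscr U\}$ is again a neighbourhood base at $0$. Second, the strongly paratopological hypothesis gives $a\oplus U=a\boxplus U$ for every $a\in G$ and every $U\in\mathscr U$: the inclusion $a\boxplus U\subseteq a\oplus U$ is Lemma \ref{lem17}, while for the reverse inclusion one writes a given element $a\oplus u$ with $u\in U$ as $a\boxplus v$ (such $v$ exists by Theorem \ref{the1.3}(4)), and then from $a\boxplus v=a\oplus\text{gyr}[a,\ominus v]v$ and left cancellation obtains $v=\text{gyr}[\ominus v,a]u$, which lies in $\text{gyr}[\ominus v,a](U)=U$ by Theorem \ref{the1.3}(9) and Definition \ref{defst}.

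Now fix $a\in G$ and a neighbourhood $W$ of $\ominus a$, and pick $U\in\mathscr U$ with $\ominus a\oplus U\subseteq W$. The key identity is
$$a\oplus\bigl(v\boxplus\ominus a\bigr)=(a\oplus v)\oplus\ominus a\qquad(v\in G),$$
obtained from Definition \ref{defbox}, the left gyroassociative law of Definition \ref{Def:gyr}, and $\text{gyr}[a,v]\circ\text{gyr}[v,a]=I$ (Theorem \ref{the1.3}(9)). Since $a\oplus(\ominus a\oplus u)=u$, this identity shows that $v\boxplus\ominus a\in\ominus a\oplus U$ if and only if $(a\oplus v)\oplus\ominus a\in U$. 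The point is that the map $h$ given by $h(v)=(a\oplus v)\oplus\ominus a$ is the composition of left translation by $a$ with the continuous partial map $x\mapsto x\oplus\ominus a$, hence is continuous and involves no inversion; moreover $h(0)=0$. Thus $h^{-1}(U)$ is a neighbourhood of $0$, and by the first preliminary observation there is $V\in\mathscr U$ with $\ominus V\subseteq h^{-1}(U)$. For every $v\in V$ we then get $(a\oplus\ominus v)\oplus\ominus a\in U$, hence $\ominus v\boxplus\ominus a\in\ominus a\oplus U$, and by the cogyroautomorphic inverse theorem (Theorem \ref{the1.3}(10)) $\ominus v\boxplus\ominus a=\ominus(a\boxplus v)$. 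Therefore $\ominus(a\boxplus V)\subseteq\ominus a\oplus U$, and since $a\boxplus V=a\oplus V$ by the second preliminary observation, $\ominus(a\oplus V)\subseteq\ominus a\oplus U\subseteq W$, where $a\oplus V$ is a neighbourhood of $a$. This establishes continuity of the inversion at $a$.

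The delicate point, and the reason the cooperation $\boxplus$ has to be brought in, is that the naive approach via $\ominus(a\oplus v)=\text{gyr}[a,v](\ominus v\ominus a)$ (Theorem \ref{the1.3}(7)) breaks down: the gyration $\text{gyr}[a,v]$ depends on the varying point $v$, and no continuity of $(a,v)\mapsto\text{gyr}[a,v]$ is available in a (strongly) paratopological gyrogroup. Passing to $\boxplus$ removes the gyration from the inversion-of-a-product formula, and the identity $a\oplus(v\boxplus\ominus a)=(a\oplus v)\oplus\ominus a$ repackages the estimate in terms of the inversion-free continuous map $h$; continuity of the inversion is then needed only at $0$. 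The single step that genuinely relies on the witnessing base $\mathscr U$, rather than on paratopologicity alone, is the equality $a\oplus U=a\boxplus U$.
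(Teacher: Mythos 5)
Your proof is correct and follows essentially the same route as the paper's: both reduce continuity of the inversion at an arbitrary point to continuity at $0$ by means of the cooperation $\boxplus$, combining Theorem \ref{the1.3}(10), the strongness-derived inclusion $a\oplus U\subseteq a\boxplus U$ (the paper's step $(*)$, which you reprove directly by solving $a\boxplus v=a\oplus u$ and using $\text{gyr}[\ominus v,a](U)=U$), and a continuous inversion-free auxiliary map (your $h(v)=(a\oplus v)\ominus a$ plays exactly the role of the paper's step $(**)$ via $(x\oplus U)\ominus x$). One cosmetic caveat: calling $\{\ominus V:V\in\mathscr{U}\}$ a neighbourhood base at $0$ overstates what continuity at $0$ gives (the sets $\ominus V$ need not be neighbourhoods), but the property you actually invoke—every neighbourhood of $0$ contains some $\ominus V$—is precisely the hypothesis, so the argument is unaffected.
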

\begin{proof}

The 'only if' part is clear. We just need to prove the 'if' part.
For $G$ is a strongly paratopological gyrogroup,
by Lemma \ref{lem17}, we have that $x\boxminus U\subseteq x\ominus U $. Thus we have that $x \oplus U\subseteq x\boxplus U$ by the the following operation:

\begin{align*}
&x\boxminus U\subseteq x\ominus U  \quad\quad\quad\quad\text{by Lemma~\ref{lem17} }
\\&\Rightarrow x\boxplus (\ominus U)=x\boxminus U\subseteq x\ominus U \quad\quad\quad\quad\text{by~} a\boxminus b=a \boxplus(\ominus b)
\\&\Rightarrow \ominus(U\boxplus(\ominus x))=x\boxplus (\ominus U)\subseteq x\ominus U \quad\quad\quad\quad\text{by Theorem \ref{the1.3} (10)~}
\\&\Rightarrow U\boxplus(\ominus x) \subseteq\ominus(x\ominus U)
\\&\Rightarrow U\boxminus x = U\boxplus(\ominus x)\subseteq\ominus(x\ominus U)\quad\quad\quad\quad\text{by~} a\boxminus b=a \boxplus(\ominus b)
\\&\Rightarrow U\subseteq \ominus(x\ominus U)\oplus x \quad\quad\text{by Theorem~\ref{the1.3} (4)~}
\\&\Rightarrow U \subseteq \bigcup_{u\in U}(\ominus(x\ominus u)\oplus x) \quad\quad\quad\quad\text{by~} \ominus(x\ominus U)\oplus x = \bigcup_{u\in U}(\ominus(x\ominus u)\oplus x)
\\&\Rightarrow U \subseteq \bigcup_{u\in U}\text{gyr}[x,\ominus u]u \quad\quad\text{by Theorem~\ref{the1.3} (6)~}
\\& \Rightarrow x\oplus U \subseteq \bigcup_{u\in U}x\oplus \text{gyr}[x,\ominus u]u
\\& \Rightarrow x\oplus U \subseteq x\boxplus U \quad\quad\quad\text{by~} x\boxplus u=x\oplus \text{gyr}[x,\ominus u]u \quad\quad\quad (*)
\end{align*}

For each $U\in\mathscr{U}$, $x\in G$, it is obvious that $(x\oplus U)\ominus x$ is a neighborhood of $0$.
Hence there exists $V_1\in \mathscr{U}$ such that $V_1\subseteq (x\oplus U)\ominus x$, which is equivalent to
$$V_1\boxplus x\subseteq x\oplus U.\quad\quad\quad (**)$$

Take any $x\in G$,  $O\in \mathscr{U}$. By (**), there is $U_1\in \mathscr{U}$ such that $U_1\boxplus(\ominus x)\subseteq \ominus x\oplus O$.
For $U_1$, there exists $U_2\in \mathscr{U}$ such that $\ominus U_2\subseteq U_1$. For $U_2$, one can find $V_1\in \mathscr{U}$ such that $x\oplus V_1\subseteq x \boxplus U_2$ by (*). Then we have that
\begin{align*}
\ominus(x\oplus V_1)&\subseteq \ominus(x \boxplus U_2)
\\&=\ominus U_2 \boxplus (\ominus x)
\\&\subseteq U_1\boxplus (\ominus x)
\\&\subseteq \ominus x\oplus O.
\end{align*}
Thus we show that the inverse operation is continuous at any $x\in G$.
This finishes the proof.
\end{proof}

\begin{theorem}\label{the3.6}
If $G$ is a Hausdorff
locally compact strongly paratopological gyrogroup, then $G$ is a strongly topological gyrogroup.
\end{theorem}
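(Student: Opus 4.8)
The plan is to reduce everything to Lemma \ref{pro3.1}: because the base $\mathscr{U}$ witnesses that $G$ is a strongly paratopological gyrogroup, it is enough to show that the inversion $\ominus(\cdot)$ is continuous at $0$. So I would argue by contradiction, assuming inversion fails to be continuous at $0$. Using that a locally compact Hausdorff space is regular and that $0$ has a neighborhood with compact closure, I would first arrange a single ``bad'' member $U_0\in\mathscr{U}$ with $\overline{U_0}$ compact and with $\ominus V\not\subseteq U_0$ for every neighborhood $V$ of $0$ (any neighborhood witnessing the discontinuity contains such a $U_0$, and the property of being bad is inherited by smaller neighborhoods).

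Next I would construct recursively a chain $\{U_n:n\in\omega\}\subseteq\mathscr{U}$ with $\overline{U_{n+1}\oplus U_{n+1}}\subseteq U_n$ for all $n$: given $U_n$, regularity gives an open $W$ with $\overline{W}\subseteq U_n$, Proposition \ref{pro22} produces an open neighborhood of $0$ whose self-sum has closure inside $\overline{W}$, and I would take $U_{n+1}\in\mathscr{U}$ inside that neighborhood; since $0\in U_{n+1}$ this also forces $U_{n+1}\subseteq U_n$. For each $n\geq1$ the failure $\ominus U_n\not\subseteq U_0$ supplies $x_n\in U_n$ with $\ominus x_n\notin U_0$ (and I set $x_0=0$, only to match the indexing in Lemma \ref{lem20}, since $x_0$ enters none of the partial sums). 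The goal is then to contradict this via Lemma \ref{lem20}, whose hypotheses (1)--(2) I must verify for $\{U_n\}$ and $\{x_n\}$: condition (1) holds by construction, and condition (2) requires the left-bracketed partial sums $y_k=(((x_1\oplus x_2)\oplus x_3)\oplus\cdots\oplus x_{k-1})\oplus x_k$ to have an accumulation point.

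The heart of the matter is showing the $y_k$ all lie in a fixed compact set. Here I would invoke Lemma \ref{lem2.13s} (finite $\oplus$-sums of members of $\mathscr{U}$ are again in $\mathscr{U}$) together with Lemma \ref{lem18} (the pertinent associativity for members of $\mathscr{U}$): these two together give a generalized associativity of $\oplus$ on $\mathscr{U}$-sets, so $y_k\in((\cdots(U_1\oplus U_2)\oplus\cdots)\oplus U_k)=U_1\oplus(U_2\oplus\cdots\oplus U_k)$, with the right-hand grouping read in any convenient way. A downward induction using $U_{j+1}\subseteq U_j$ and $\overline{U_{j+1}\oplus U_{j+1}}\subseteq U_j$ then yields $U_2\oplus\cdots\oplus U_k\subseteq U_1$, hence $y_k\in U_1\oplus U_1\subseteq\overline{U_1\oplus U_1}\subseteq U_0\subseteq\overline{U_0}$. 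Since $\overline{U_0}$ is compact, $\{y_k\}$ has an accumulation point $y\in G$, so Lemma \ref{lem20} applies and delivers some $k$ with $\ominus x_{k+1}\in U_0$, contradicting the choice of $x_{k+1}$.

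I expect the main obstacle to be exactly this compactness bookkeeping for the partial sums: since $\oplus$ is non-associative, one cannot manipulate $y_k$ freely, and the argument only goes through by funneling the sums through Lemmas \ref{lem18} and \ref{lem2.13s} in the correct order before applying the telescoping estimate coming from $\overline{U_{n+1}\oplus U_{n+1}}\subseteq U_n$; the rest is a routine adaptation of the classical locally compact paratopological-group argument. Once inversion is known to be continuous at $0$, Lemma \ref{pro3.1} immediately upgrades $G$ to a strongly topological gyrogroup, completing the proof.
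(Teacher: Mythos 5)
Your proposal is correct and follows essentially the same route as the paper's own proof: reduce to continuity of inversion at $0$ via Lemma \ref{pro3.1}, assume a bad neighborhood with compact closure, build the chain $\overline{U_{n+1}\oplus U_{n+1}}\subseteq U_n$ with witnesses $\ominus x_n\notin U_0$, confine the left-bracketed partial sums to $U_0$ using Lemmas \ref{lem2.13s} and \ref{lem18}, extract an accumulation point by compactness of $\overline{U_0}$, and contradict via Lemma \ref{lem20}. The only differences are cosmetic (e.g.\ your explicit use of regularity and Proposition \ref{pro22} in the recursion, and setting $x_0=0$ for indexing), so no changes are needed.
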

\begin{proof}
To prove $G$ is a topological gyrogroup,
it is just to show that the inverse operation is continuous at $0\in G$ by Lemma \ref{pro3.1}.

Let the neighborhood base
$\mathscr{U}$ at 0 of $G$ witness that $G$ is a strongly paratopological gyrogroup. We shall prove that for each $U\in \mathscr{U}$ one can find $V\in \mathscr{U}$ such that $\ominus V\subseteq U$.
Assume the contrary, there is a $U\in \mathscr{U}$ such that for each $V \in \mathscr{U}$, $\ominus V$ is not a subset of $U$. Since $G$ is a Hausdorff
locally compact space, so $G$ is regular. Thus, we can find a $U_0\in \mathscr{U}$ such that $U\supseteq\overline{U_0}$ is compact. $G$ is a paratopological gyrogroup, so we can define a sequence $\{U_n:n\in \omega\}$ of $\mathscr{U}$ such that $\overline{U_{n+1}\oplus U_{n+1}}\subseteq U_n$ for each $n\in \omega$ and there is $x_n\in U_n$ satisfying $\ominus x_n \notin U_0$ for each $n\in \mathbb{N}$. Put $y_k=x_1\oplus x_2\oplus \cdot\cdot\cdot\oplus x_k$, for each $k\in\mathbb{N}$.
Then from Lemmas \ref{lem17} and \ref{lem18} it easily follows all elements $y_k$ are in $U_0$. In fact,

\begin{align*}
&y_k=x_1\oplus x_2\oplus \cdot\cdot\cdot\oplus x_k
\\&\in x_1\oplus U_2\oplus \cdot\cdot\cdot\oplus U_k
\\&= x_1\oplus (U_2\oplus \cdot\cdot\cdot\oplus U_k) \quad\quad\text{by Lemmas \ref{lem2.13s}, \ref{lem18}}
\\&\subseteq x_1\oplus U_1
\\&\subseteq U_1\oplus U_1
\\&\subseteq U_0.
\end{align*}
Since the closure of $U_0$ is compact, there exists an accumulation point $y$ for the sequence $\{y_k:k\in\mathbb{N}\}$ in $G$. Thus by Lemma \ref{lem20} there is a $k\in\omega$ such that $\ominus x_{k+1}\in U_0$. This is a contradiction with $\ominus x_{k+1}\notin U_0$.
Thus we have proved that the inverse operation $\ominus$ is continuous at 0.
This finishes the proof.
\end{proof}

In the latter case we try to remove the Hausdorff restriction in Theorem \ref{the3.6}
To demonstrate this, we need the following theorem, which is inspired by Ravsky's result \cite{Ra}.

\begin{theorem}\label{the3.12}
Let $G$ be a strongly paratopological gyrogroup,
and $H$ be an invariant subgyrogroup of $G$.
If $H$ and $G/H$ are strongly topological gyrogroups, then so is $G$.
\end{theorem}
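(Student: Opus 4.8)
The plan is to reduce the continuity of the inverse in $G$ to the continuity already available in $H$ and in $G/H$, via the quotient map $\pi:G\to G/H$. By Lemma \ref{pro3.1} it suffices to check that the inverse operation $\ominus:G\to G$ is continuous at the identity $0$. Fix a neighbourhood base $\mathscr{U}$ at $0$ in $G$ witnessing that $G$ is strongly paratopological, and let $O\in\mathscr{U}$ be given; we must produce $V\in\mathscr{U}$ with $\ominus V\subseteq O$. The idea is to split this into two layers: first control the image in $G/H$, where inversion is continuous (so a small coset-neighbourhood maps into the image of $O$ under $\ominus$), and then correct the remaining $H$-valued discrepancy using continuity of inversion in $H$ together with continuity of $\oplus$ in $G$.

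First I would record the structural facts I need. Since $H$ is an invariant subgyrogroup, by Theorem \ref{the2.5} left cosets behave well under $\oplus$, and by Proposition \ref{pro2.12} the quotient map $\pi$ is open and continuous; moreover $\pi$ carries $\mathscr{U}$ to a neighbourhood base $\widetilde{\mathscr{U}}=\{\pi(U):U\in\mathscr{U}\}$ at the identity of $G/H$, and one checks (using Definition \ref{defst} and that $H$ is invariant) that this base still witnesses $G/H$ as a strongly paratopological, hence strongly topological, gyrogroup. Then $\pi(\ominus x)=\ominus\pi(x)$ for all $x$, so continuity of inversion in $G/H$ at $\widetilde{0}$ gives, for the chosen $O$, some $W\in\mathscr{U}$ with $\ominus\pi(W)\subseteq\pi(O)$, i.e. $\pi(\ominus W)\subseteq\pi(O)$, equivalently $\ominus W\subseteq O\oplus H$ (using Theorem \ref{the2.5} to write $\pi^{-1}\pi(O)=O\oplus H$ after shrinking $O$ to an open set if necessary).

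Next I would use continuity of $\oplus$ in $G$ to choose $O_1,O_2\in\mathscr{U}$ with $\overline{O_1\oplus O_1}\subseteq O$ and then, since $H$ is a topological gyrogroup in its own right, pick a neighbourhood $O_H$ of $0$ in $H$ with $\ominus O_H\subseteq O_1\cap H$; by the subspace topology $O_H=O_2\cap H$ for some $O_2\in\mathscr{U}$. Now set $V=W\cap O_2\cap O_1$ (intersected down further as needed) and take $x\in V$. From $\ominus W\subseteq O_1\oplus H$ write $\ominus x = p\oplus h$ with $p\in O_1$, $h\in H$; solving, $h = (\ominus p)\oplus(\ominus x)$, and since $\ominus p\in \ominus O_1$ and $\ominus x\in\ominus V\subseteq\ominus O_2$, continuity/smallness forces $h$ to lie in a prescribed small neighbourhood of $0$ in $H$ — here is where I would invoke the strong condition (Lemmas \ref{lem17}--\ref{lem19}, \ref{lem2.12}) to turn the $\boxplus/\oplus$ discrepancies into genuine containments, concluding $\ominus h\in O_H\subseteq O_1$. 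Then $\ominus x = p\oplus h$ with $p\in O_1$ and $\ominus h\in O_1$, and a gyrosum-inversion computation (Theorem \ref{the1.3}(7)) together with $\overline{O_1\oplus O_1}\subseteq O$ yields $\ominus x\in O$. Hence $\ominus V\subseteq O$, as required, and Lemma \ref{pro3.1} finishes the proof.

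The main obstacle I anticipate is the middle step: extracting genuine smallness of the $H$-component $h$ from the fact that both $\ominus p$ and $\ominus x$ are small. In the group case this is a one-line cancellation, but here $(\ominus p)\oplus(\ominus x)$ only lands in $\ominus O_1\oplus\ominus V$, which is not obviously a basic neighbourhood, and the gyroautomorphisms obstruct naive rearrangement. This is precisely why the hypothesis is \emph{strongly} paratopological: the $\mathscr{U}$-invariance under all $\mathrm{gyr}[x,y]$ (Definition \ref{defst}), exploited through Lemmas \ref{lem2.13s}, \ref{lem18}, \ref{lem19} and \ref{lem2.12}, is what lets me absorb the gyrations and reduce $\ominus O_1\oplus\ominus V$ to something comparable with a member of $\mathscr{U}$. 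I would organize the bookkeeping by choosing the sequence $O\supseteq O_1\supseteq O_2\supseteq\cdots$ of members of $\mathscr{U}$ up front (as in the proof of Theorem \ref{the3.6} and Lemma \ref{lem2.12}) so that each absorption step consumes one level, and then verify at the end that the finitely many levels used suffice.
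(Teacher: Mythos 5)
Your overall architecture is the same as the paper's: reduce to continuity of $\ominus$ at $0$ via Lemma \ref{pro3.1}, use continuity of inversion in $G/H$ (through the open quotient map $\pi$) to get $\ominus V\subseteq V_1\oplus H$ for suitable basic neighbourhoods, decompose $\ominus x=p\oplus h$ with $p$ small and $h\in H$, and finish by showing the $H$-component $h$ is small. The paper does exactly this, choosing $U_1\oplus U_1\subseteq U$, then $V_1\subseteq U_1$ with $(\ominus V_1\ominus V_1)\cap H\subseteq U_1$ (this is the $H$-step), then $V_2\subseteq V_1$ with $\ominus V_2\subseteq V_1\oplus H$ (the $G/H$-step), and writing $x=v\oplus h$, $h=\ominus v\oplus x\in(\ominus V_1\ominus V_2)\cap H\subseteq U_1$, so $x\in V_1\oplus U_1\subseteq U$.

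The gap in your write-up is precisely the step you flag as "the main obstacle": your stated reason that $h=(\ominus p)\oplus(\ominus x)$ is small "since $\ominus p\in\ominus O_1$ and $\ominus x\in\ominus V\subseteq\ominus O_2$" is circular, because in a paratopological gyrogroup the sets $\ominus O_1$ and $\ominus V$ are not known to be small --- that is exactly the continuity you are trying to prove --- and the gyr-invariance of $\mathscr{U}$ by itself does nothing to shrink them. The mechanism that actually works (and which justifies the paper's choice of $V_1$) is to apply gyrosum inversion at this point, not at the end: fix $S\in\mathscr{U}$ with $\ominus(S\cap H)\subseteq O_1\cap H$ (continuity of inversion in $H$), and arrange the later choices so that $V\oplus O_2\subseteq S$; then Lemma \ref{lem19} (which encodes Theorem \ref{the1.3}(7) plus $\text{gyr}$-invariance of $S$) gives $h=\ominus p\oplus(\ominus x)\in\ominus S$, and since $h\in H$ the witness $\ominus h$ lies in $S\cap H$, whence $h\in\ominus(S\cap H)\subseteq O_1$ and $\ominus x=p\oplus h\in O_1\oplus O_1\subseteq O$ directly --- the "gyrosum-inversion computation" you postpone to the last line is then unnecessary there. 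Note also that this forces the order of choices you leave to "bookkeeping": the neighbourhood $S$ controlling inversion in $H$ must be fixed \emph{before} the $G/H$-step, and the coset decomposition must be taken with a neighbourhood $O_2$ small enough that $V\oplus O_2\subseteq S$; as written, your decomposition uses $O_1$, and $V\oplus O_1\supseteq O_1$ need not sit inside the set controlling $H$-inversion, so the levels do not close without this reordering (the paper closes them by imposing the $H$-condition on $V_1$ first and then taking $V_2\subseteq V_1$).
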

\begin{proof}
To prove $G$ is a topological gyrogroup,
it is just to show that the inverse operation is continuous at $0\in G$ by Lemma \ref{pro3.1}.

Let the neighborhood base
$\mathscr{U}$ at 0 of $G$ witness that $G$ is a strongly paratopological gyrogroup. We shall prove that for each open neighborhood $U\in\mathscr{U}$ at 0 in $G$ there exists an open neighborhood
$V_2\in\mathscr{U}$
such that $\ominus V_2\subseteq U$.
For each open neighborhood $U\in\mathscr{U}$, there exists an open neighborhood $U_1\in\mathscr{U}$ at 0 in $G$ such that $U_1\oplus U_1\subseteq U$.
For $U_1$, there exists an open neighborhood $V_1\in\mathscr{U}$ such that $V_1\subseteq U_1$, and
$(\ominus V_1\ominus V_1)\cap H\subseteq U_1\cap H\subseteq U_1$, for $H$ is
a topological gyrogroup.
For $V_1$, there exists an open neighborhood $V_2\in\mathscr{U}$ such that $V_2\subseteq V_1$, and
$\ominus V_2\subseteq\ominus(V_2\oplus H)=\pi^{-1}(\ominus\pi(V_2))\subseteq\pi^{-1}(\pi(V_1))=V_1\oplus H$,
for $\pi$ is an open mapping of $G$ onto $G/H$ and $G/H$ is a topological gyrogroup.
If $x\in\ominus V_2$
then there exist elements $v\in V_1, h\in H$ such that $x = v\oplus h$. Then
$h =\ominus v\oplus x\in(\ominus V_1\ominus V_2) \cap H\subseteq U_1$. Therefore $x\in V_1\oplus U_1\subseteq U_1\oplus U_1\subseteq U$.
Thus we have proved that the inverse operation $\ominus$ is continuous.
This finishes the proof.
\end{proof}

\begin{proposition}\label{the26}
Let $G$ be a paratopological gyrocommutative gyrogroup
and $\mathscr{U}$ be the neighborhood base
at 0 of $G$.
If $G$ is locally compact, then $B=\bigcap\{\overline{U}:U\in\mathscr{U}\}$ is a closed invariant
subgyrogroup of $G$.
\end{proposition}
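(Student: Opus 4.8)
The plan is to verify four things: $B$ is closed, $B\oplus B\subseteq B$, $\ominus B=B$, and $B$ is invariant; the first three together with $0\in B$ and the characterization of subgyrogroups in \cite{ST} show that $B$ is a closed subgyrogroup, and then invariance finishes the proof. Closedness is trivial, since $B$ is an intersection of the closed sets $\overline{U}$, and $0\in\overline{U}$ for all $U\in\mathscr{U}$ gives $0\in B$. For $B\oplus B\subseteq B$: given $U\in\mathscr{U}$, Proposition \ref{pro22} provides an open neighbourhood of $0$, hence a basic one $V\in\mathscr{U}$ contained in it, with $\overline{V}\oplus\overline{V}\subseteq\overline{U}$; since $B\subseteq\overline{V}$ this gives $B\oplus B\subseteq\overline{U}$, and $U$ being arbitrary, $B\oplus B\subseteq B$. (Neither of these uses local compactness or gyrocommutativity; local compactness enters only through the remark that, picking $U_0\in\mathscr{U}$ with $\overline{U_0}$ compact, $B\subseteq\overline{U_0}$ is compact, which is presumably what Theorem \ref{the3.17} needs of $B$.)

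The inclusion $\ominus B\subseteq B$ I would get from a symmetry of the condition defining $B$. Each left translation $L_a$ is a homeomorphism carrying $0$ to $a$ (Proposition \ref{pro23}), so $\{a\oplus V:V\in\mathscr{U}\}$ is a neighbourhood base at $a$; hence $a\in\overline{W}$ iff $(a\oplus V)\cap W\neq\emptyset$ for every $V\in\mathscr{U}$, and therefore $a\in B$ iff $(a\oplus V)\cap W\neq\emptyset$ for all $V,W\in\mathscr{U}$. Now $\text{gyr}[a,\ominus a]=\text{gyr}[a,\ominus a\oplus a]=\text{gyr}[a,0]=I$ by the right loop property and Theorem \ref{the1.3}(12), so $a\oplus(\ominus a\oplus v)=v$; together with Theorem \ref{the1.3}(3) this yields $\ominus a\oplus v=w\iff a\oplus w=v$, hence $(\ominus a\oplus V)\cap W\neq\emptyset\iff(a\oplus W)\cap V\neq\emptyset$. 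Since the condition ``$(a\oplus V)\cap W\neq\emptyset$ for all $V,W\in\mathscr{U}$'' is symmetric in $V$ and $W$, we conclude $\ominus a\in B\iff a\in B$, so $\ominus B=B$ and $B$ is a subgyrogroup.

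For invariance I would use $\phi_a\colon x\mapsto(\ominus a\oplus x)\oplus a=R_a(L_{\ominus a}(x))$, where $R_a(x)=x\oplus a$. Checking with Theorem \ref{the1.3}(1),(5) that $\phi_a$ is a bijection with inverse $\phi_{\ominus a}$ shows $\phi_a$ is a homeomorphism fixing $0$; since $\{\phi_a(U):U\in\mathscr{U}\}$ is again a neighbourhood base at $0$, this forces $\phi_a(B)=\bigcap_{U\in\mathscr{U}}\overline{\phi_a(U)}=B$ for all $a$. Equipped with $\phi_a(B)=B$, Lemma \ref{lem25} ($\text{gyr}[x,y](B)=B$ for all $x,y$), and gyrocommutativity, I would verify the coset identities $a\oplus(B\oplus b)=(a\oplus b)\oplus B=(a\oplus B)\oplus b$ of Theorem \ref{the2.5}. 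The equality $(a\oplus b)\oplus B=a\oplus(b\oplus B)$ is immediate from the right gyroassociative law (Theorem \ref{the1.3}(1)) and Lemma \ref{lem25}; for the remaining identities one first deduces $b\oplus B=B\boxplus b$ from $\phi_b(B)=B$ together with $R_b^{-1}=(\,\cdot\,)\boxminus b$, and then invokes commutativity of the cooperation (Theorem \ref{the2.5com}(2)) and Lemma \ref{lem25}. I expect this passage — reconciling the $\oplus$-cosets of $B$ with its $\boxplus$-cosets — to be the main obstacle; everything else is soft.
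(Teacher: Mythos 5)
Your first half, showing that $B$ is a closed subgyrogroup, is correct and takes a genuinely different, more elementary route than the paper. The paper proves closure under $\ominus$ by a Numakura-type argument: local compactness makes $B$ compact, $B\oplus B=B$, and a Zorn's-lemma minimal closed right ideal of $B$ produces inverses. You instead observe that $a\in B$ iff $(a\oplus V)\cap W\neq\emptyset$ for all $V,W\in\mathscr{U}$, and that $\ominus a\oplus v=w$ iff $a\oplus w=v$ (via $\text{gyr}[a,\ominus a]=I$ and Theorem \ref{the1.3}(3)), so the defining condition is symmetric under $a\mapsto\ominus a$; this is valid, uses neither local compactness nor gyrocommutativity, and together with Proposition \ref{pro22} it gives the subgyrogroup property more cheaply than the paper does.

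The invariance half, however, has a genuine gap --- exactly the one you flag yourself. Your homeomorphism argument yields $\phi_a(B)=B$, i.e. $(\ominus a\oplus B)\oplus a=B$, equivalently $a\oplus B=B\boxplus a=a\boxplus B$; but Theorem \ref{the2.5} demands the $\oplus$-coset identities $a\oplus(B\oplus b)=(a\oplus b)\oplus B=(a\oplus B)\oplus b$, and converting $B\boxplus a$ into $B\oplus a$ is precisely the unproved step. Since $x\boxplus a=x\oplus\text{gyr}[x,\ominus a](a)$ applies a gyration to $a$ whose parameters vary with $x\in B$, neither Lemma \ref{lem25} nor Theorem \ref{the2.5com}(2) lets you factor the union, and I do not see how $B\oplus a=a\oplus B$ (the paper's Claim 1) or $(a\oplus B)\oplus b=(a\oplus b)\oplus B$ (Claim 2) could follow from $\phi_a(B)=B$, $\text{gyr}[x,y](B)=B$ and cogyro-commutativity alone. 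The paper closes this with separate continuity-at-$0$ arguments: $f_1=L_{\ominus a}\circ R_a$ is continuous and fixes $0$, so $f_1(B)\subseteq B$, giving $B\oplus a\subseteq a\oplus B$; for the reverse inclusion, $f_2(x)=\ominus a\oplus((a\oplus(a\oplus x))\ominus a)$, which gyrocommutativity identifies with $x\mapsto(a\oplus x)\boxminus a$, is continuous and fixes $0$, so $(a\oplus B)\boxminus a\subseteq B$ and hence $a\oplus B\subseteq B\oplus a$ by Theorem \ref{the1.3}(5); Claim 2 needs the analogous pair $f_3,f_4$. You need these (or equivalent) closure-under-continuous-maps arguments; the purely algebraic reconciliation of $\oplus$-cosets with $\boxplus$-cosets does not go through. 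A minor point: your assertion that $\phi_a^{-1}=\phi_{\ominus a}$ ``by Theorem \ref{the1.3}(1),(5)'' also needs gyrocommutativity (Theorem \ref{the2.5com}(2),(3)): in a general gyrogroup $R_a^{-1}=(\cdot)\boxminus a$, so $\phi_a^{-1}(y)=a\oplus(y\boxminus a)$, which equals $(a\oplus y)\ominus a$ only via the gyrocommutative identities --- harmless here, but the citation as written is insufficient.
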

\begin{proof}
It is obvious that $B$ is closed.
Firstly, we shall prove that $B$ is a subgyrogroup.
By Proposition \ref{pro22} we can get $B=\bigcap\{\overline{U}\oplus\overline{U}:U\in\mathscr{U}\}$.
Clearly, for every $U\in \mathscr{U}$, we have that $B\oplus B\subseteq \overline{U}\oplus\overline{U}$.
Hence it follows that $B\oplus B=B$.
Since $G$ is locally compact, $B$ is compact. A nonempty subset $M$ of $B$ is called a right ideal in
$B$ if $M\oplus B\subseteq M$. Since $B$ is compact, closed and $B\oplus B=B$, applying the
Kuratowski-Zorn lemma to the family of all closed right ideals in $B$ ordered by inverse inclusion,
it contains a minimal closed
right ideal, denoted by $H$.
For an arbitrary element $x\in H$, we
have that $x\oplus H\subseteq H\oplus B\subseteq H$.
Follows Lemma \ref{lem25}, it is clear that $(x\oplus H)\oplus B=x\oplus (H\oplus\bigcup_{h\in H}\text{gyr}[h, x]B) =x\oplus(H\oplus B)\subseteq x\oplus H$, i.e., $x\oplus H$ is a right
ideal in $B$. Since $x\oplus H$ is closed in $B$, $x\oplus H\subseteq H$, and $H$ is a minimal right ideal in $B$,
we conclude that $x\oplus H=H$ for each $x\in H$. In particular, $x\oplus(x\oplus H)=H$ for any $x\in H$,
whence it follows that $x\oplus(x\oplus y)=x$ for some $y\in H$ and hence $\ominus x= y\in H$. In its turn,
this implies that $0\in H$, $H= B$, and that $B$ is a subgyrogroup of $G$.

Secondly, we shall prove that $B$ is an invariant subgyrogroup.

{\bf Claim 1.} $a\oplus B= B\oplus a$ for each $a\in G$.

For $a\in G$, suppose $f_1(x)=\ominus a\oplus(x\oplus a)$ for any $x\in G$.
So, $f_1=L_{\ominus a}\circ R_{a}$ which is continuous by Proposition \ref{pro23}.
Since $f_1(0)=\ominus a\oplus(0\oplus a)=0$, for $U\in \mathscr{U}$,
there exists $V\in \mathscr{U}$ such that $f_1(\overline{V})=\ominus a\oplus(\overline{V}\oplus a)\subseteq\overline{U}$.
It follows that $\ominus a\oplus(B\oplus a)\subseteq B$, for each $a\in G$, that is $B\oplus a\subseteq a\oplus B$.

On the other hand, for $a\in G$, suppose $f_2(x)=\ominus a\oplus(a\oplus (a\oplus x)\ominus a)$ for any $x\in G$.
So, $f_2=L_{\ominus a}\circ R_{\ominus a}\circ L_{a}\circ L_{a}$ which is continuous by Proposition \ref{pro23}.
Since $f_2(0)=\ominus a\oplus(a\oplus (a\oplus 0)\ominus a)=0$, for $U_1\in \mathscr{U}$,
there exists $V_1\in \mathscr{U}$ such that $f_2(\overline{V_1})=\ominus a\oplus(a\oplus (a\oplus \overline{V_1})\ominus a)\subseteq\overline{U_1}$.
It follows that $\ominus a\oplus(a\oplus (a\oplus B)\ominus a)\subseteq B$,
for each $a\in G$.
Since $G$ is a gyrocommutative gyrogroup, for each $h\in B$ we have
\begin{align*}
&(a\oplus h)\boxminus a=(a\oplus h)\boxplus(\ominus a)
\\&=\ominus a\boxplus (a\oplus h)\quad\quad\quad\quad\quad\quad\quad\quad\text{ by Theorem \ref{the2.5com}}
\\&=\ominus a\oplus(a\oplus (a\oplus h)\ominus a). \quad\quad\text{by Theorem \ref{the2.5com}}
\end{align*}
So we can get $(a\oplus B)\boxminus a=\ominus a\oplus(a\oplus (a\oplus B)\ominus a)\subseteq B$, which means
$a\oplus B\subseteq B\oplus a$. In conclusion, we get $a\oplus B= B\oplus a$.

{\bf Claim 2.} $(a\oplus B)\oplus b= (a\oplus b)\oplus B$ for each $a, b\in G$.

For $a, b\in G$, suppose $f_3(x)=\ominus (a\oplus b)\oplus((a\oplus x)\oplus b)$ for any $x\in G$.
So, $f_3=L_{\ominus (a\oplus b)}\circ R_{b}\circ L_{a}$ which is continuous by Proposition \ref{pro23}.
Since $f_3(0)=\ominus (a\oplus b)\oplus((a\oplus 0)\oplus b)=0$, for $U_2\in \mathscr{U}$,
there exists $V_2\in \mathscr{U}$ such that $f_3(\overline{V_2})=\ominus (a\oplus b)\oplus((a\oplus \overline{V_2})\oplus b)\subseteq\overline{U_2}$.
It follows that $\ominus (a\oplus b)\oplus((a\oplus B)\oplus b)\subseteq B$, for each $a, b\in G$,
that is $(a\oplus B)\oplus b\subseteq (a\oplus b)\oplus B$.

Also,
for $a, b\in G$, suppose $f_4(x)=\ominus a\oplus(\ominus b\oplus ((b\oplus (a\oplus b\oplus x))\ominus b))$ for any $x\in G$.
So, $f_4=L_{\ominus a}\circ R_{\ominus b}\circ L_{\ominus b}\circ L_{b}\circ L_{a\oplus b}$ which is continuous by Proposition \ref{pro23}.
Since $G$ is a gyrocommutative gyrogroup, for each $h\in B$ we have
\begin{align*}
&((a\oplus b)\oplus h)\boxminus b=((a\oplus b)\oplus h)\boxplus (\ominus b)
\\&=(\ominus b)\boxplus((a\oplus b)\oplus h)\quad\quad\quad\quad\quad\quad\quad\quad\text{ by Theorem \ref{the2.5com}}
\\&=\ominus b\oplus((b\oplus (a\oplus b \oplus h))\ominus b). \quad\quad\text{by Theorem \ref{the2.5com}}
\end{align*}
So $f_4(0)=\ominus a\oplus(\ominus b\oplus ((b\oplus (a\oplus b\oplus 0))\ominus b))=\ominus a\oplus(((a\oplus b)\oplus 0)\boxminus b)=0$, for $U_3\in \mathscr{U}$,
there exists $V_3\in \mathscr{U}$ such that $f_4(\overline{V_3})=\ominus a\oplus(((a\oplus b)\oplus \overline{V_3})\boxminus b)\subseteq\overline{U_3}$.
It follows that $\ominus a\oplus(((a\oplus b)\oplus B)\boxminus b)\subseteq B$, that is $(a\oplus b)\oplus B\subseteq (a\oplus B)\oplus b$.
for each $a, b\in G$.
In conclusion, we get $(a\oplus B)\oplus b= (a\oplus b)\oplus B$ for each $a, b\in G$.

{\bf Claim 3.} $a\oplus (B\oplus b)= (a\oplus b)\oplus B$ for each $a, b\in G$.

For $a, b\in G$, suppose $f_5(x)=\ominus (a\oplus b)\oplus(a\oplus (b\oplus x))=\text{gyr}[a, b](x)$ for any $x\in G$.
By Lemma \ref{lem25}, we can get $\text{gyr}[a, b](B)=B$, that is $\ominus (a\oplus b)\oplus(a\oplus (b\oplus B))=B$.
Thus, $\ominus (a\oplus b)\oplus(a\oplus (B\oplus b))=B$, by Claim 1.
In conclusion, we have $(a\oplus b)\oplus B= a\oplus (B\oplus b)$.

From Claims 1, 2 and 3 it follows that
$B$ is an invariant
subgyrogroup of $G$ by Theorem \ref{the2.5}.
Hence we prove that $B$ is a closed invariant
subgyrogroup of $G$.
\end{proof}

\begin{theorem}\label{the3.17}
If $G$ is locally compact strongly paratopological gyrocommutative gyrogroup, then
$G$ is a strongly topological gyrogroup.
\end{theorem}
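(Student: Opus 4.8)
The plan is to collapse the obstruction to Hausdorffness in $G$, invoke Theorem \ref{the3.6} on the quotient, and then glue the two pieces back together with Theorem \ref{the3.12}. Fix a neighbourhood base $\mathscr{U}$ at $0$ witnessing that $G$ is a strongly paratopological gyrogroup and put $B=\bigcap\{\overline{U}:U\in\mathscr{U}\}$. By Proposition \ref{the26}, $B$ is a closed invariant subgyrogroup of $G$; since $G$ is locally compact, $B$ is compact; and since $\text{gyr}[a,b](B)=B$ for all $a,b\in G$ by Lemma \ref{lem25}, $B$ is an $L$-subgyrogroup, so the quotient $G/B$ is available, with $\pi$ open and continuous (Proposition \ref{pro2.12}) and in fact perfect, as $B$ is compact (Proposition \ref{pro27s}). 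By Theorem \ref{the3.12} it therefore suffices to prove that $B$ and $G/B$ are both strongly topological gyrogroups.

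For $G/B$ I would first check it is a strongly paratopological gyrogroup: $\{\pi(U):U\in\mathscr{U}\}$ is an open base at $\pi(0)$ because $\pi$ is open and continuous, and since $\pi$ is a gyrogroup homomorphism the gyrations of $G/B$ descend from those of $G$, so $\text{gyr}[\pi(a),\pi(b)](\pi(U))=\pi(\text{gyr}[a,b](U))=\pi(U)$. It is locally compact, being a perfect image of a locally compact space. The delicate point is that $G/B$ is Hausdorff: here I would use that $x\in B$ holds exactly when $x$ and $0$ cannot be separated by open sets --- which is precisely what $x\in\bigcap\{\overline{U}:U\in\mathscr{U}\}$ says --- so distinct cosets $\pi(x)\neq\pi(y)$ come from an element lying outside $B$, i.e. one that \emph{is} separated from $0$; then, choosing $V\in\mathscr{U}$ with $\overline{V}\oplus\overline{V}\subseteq\overline{U}$ for a suitable $U$ via Proposition \ref{pro22} and translating, one builds disjoint saturated open neighbourhoods of $\pi(x)$ and $\pi(y)$. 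With $G/B$ Hausdorff, locally compact and strongly paratopological, Theorem \ref{the3.6} shows it is a strongly topological gyrogroup.

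For $B$ with the subspace topology: it is a strongly paratopological gyrogroup with base $\{U\cap B:U\in\mathscr{U}\}$ (these sets are fixed by every $\text{gyr}[x,y]$, $x,y\in B$, since both $U$ and $B$ are), and it is compact; by Lemma \ref{pro3.1} it is enough to show that $\ominus$ is continuous at $0$ in $B$. I expect this to follow from a sufficiently precise description of the (very coarse) subspace topology that $B$ inherits, combined with its compactness and an argument parallel to the one in Theorem \ref{the3.6} and its Lemma \ref{lem20}. Granting this, $B$ and $G/B$ are both strongly topological gyrogroups, $B$ is invariant, and Theorem \ref{the3.12} yields that $G$ is a strongly topological gyrogroup.

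The step I anticipate to be the main obstacle is the separation-free bookkeeping: proving $G/B$ is Hausdorff with no prior $T_i$ hypothesis on $G$, and extracting enough control on the subspace topology of the compact kernel $B$ to get continuity of its inverse. This is also where gyrocommutativity is genuinely used --- it enters through Proposition \ref{the26} and is what allows the gyration and cogyration terms appearing in the translation computations to be absorbed back into members of $\mathscr{U}$.
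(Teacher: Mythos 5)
Your skeleton is the same as the paper's: form $B=\bigcap\{\overline{U}:U\in\mathscr{U}\}$, invoke Proposition \ref{the26} to get a closed invariant subgyrogroup, show $G/B$ is a Hausdorff locally compact paratopological gyrogroup, apply Theorem \ref{the3.6} to the quotient and then Theorem \ref{the3.12} to recover $G$. However, two of your steps are left as sketches, and one of them would not go through as you describe it. For the Hausdorffness of $G/B$ you propose to take $x\notin$ (a coset of) $B$, pick $V$ with $\overline{V}\oplus\overline{V}\subseteq\overline{U}$ via Proposition \ref{pro22}, ``translate, and build disjoint saturated open neighbourhoods.'' The saturations $x\oplus V\oplus B$ and $y\oplus V\oplus B$ cannot be compared by the usual cancellation trick, because solving $x\oplus v_1\oplus b_1=y\oplus v_2\oplus b_2$ for $\ominus x\oplus y$ produces the \emph{inverse} of an element of $V$, and inversion is exactly what is not yet known to be continuous; no choice of $V$ with $\overline V\oplus\overline V\subseteq\overline U$ controls $\ominus v_2$. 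This is where the paper genuinely uses local compactness: if $\pi(x)$ and $\pi(y)$ could not be separated, the closed sets $(x\oplus\overline{U})\cap(y\oplus\overline{U})$, $U\in\mathscr{U}$, $U\subseteq V$, would form a family with the finite intersection property inside the compact set $x\oplus\overline{V}$, forcing $(x\oplus B)\cap(y\oplus B)\neq\emptyset$, a contradiction. Your sketch omits this compactness argument, and without it the separation step is unjustified (for plain paratopological groups the quotient by $\bigcap\overline U$ need not be Hausdorff in general).

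The second gap is the one you flag yourself: Theorem \ref{the3.12} requires \emph{both} $H=B$ and $G/B$ to be strongly topological gyrogroups, and you only say you ``expect'' continuity of $\ominus$ on the compact, possibly non-Hausdorff subspace $B$ to follow from an argument parallel to Lemma \ref{lem20}, then proceed ``granting this.'' That is not a proof; note that Theorem \ref{the3.2} needs Hausdorffness, and $B$ carries no separation hypothesis, so compactness alone does not hand you the inverse on $B$. (To be fair, the published proof of Theorem \ref{the3.17} also cites Theorem \ref{the3.12} after establishing only that $G/B$ is a topological gyrogroup, without verifying the hypothesis on $B$; you correctly noticed that this hypothesis is needed, but you have not discharged it either.) So as written the proposal has the right architecture — identical to the paper's — but two load-bearing steps, the Hausdorffness of $G/B$ (fixable by the paper's finite-intersection-property argument) and the status of $B$ itself, remain open.
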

\begin{proof}
Let the neighborhood base
$\mathscr{U}$ at 0 of $G$ witness that $G$ is a strongly paratopological gyrogroup.
Since $B=\bigcap\{\overline{U}:U\in\mathscr{U}\}$ is a closed invariant
subgyrogroup of $G$ by Proposition \ref{the26}, the quotient paratopological gyrogroup $G/B$ is a $T_1-$space.
Since $B$ is compact, the quotient homomorphism $\pi:G\rightarrow G/B$ is a
closed mapping by Proposition \ref{pro27s}. So we can get the space $G/B$ is locally compact.
We prove that $G/B$ is Hausdorff. Suppose for a contradiction that two distinct elements $a,b\in G/B$ cannot be
separated by open neighborhoods. Take $x, y\in G$ with $\pi(x)=a$ and $\pi(y) = b$. Then
$(x\oplus B)\cap (y\oplus B)=\emptyset$.
Since $G$ is locally compact, it exists $V\in\mathscr{U}$ such that $\overline{V}$ is compact.
By our assumption, the family
$$\{(x\oplus \overline{U})\cap(y\oplus \overline{U}):U\in\mathscr{U}, U\subseteq V\}$$
of closed subsets of the compact space $x\oplus \overline{V}$ has the finite intersection property, which
in its turn implies that $(x\oplus B)\cap (y\oplus B)\neq\emptyset$.
This contradiction proves that $G/B$ is Hausdorff. Since $G/B$ is a locally compact paratopological gyrogroup, it must be a topological gyrogroup by Theorem \ref{the3.6}.
According to Theorem \ref{the3.12}, $G$ is also a topological gyrogroup.
\end{proof}

\begin{question}
Can the condition 'gyrocommutative' in Theorem \ref{the3.17} be omitted?
\end{question}

\section{pseudocompact strongly paratopological gyrogroups}

Theorem \ref{the3.6} will be extended to pseudocompact (and regular countably compact) paratopological groups in the following section.
The following lemmas can be used to derive additional necessary conditions for a paratopological gyrogroup to be a topological gyrogroup.
\begin{lemma}\label{lem3.7}
Suppose that $G$ is a paratopological gyrogroup, and $U$ is any open neighborhood of
the neutral element 0 in $G$.
Then $\overline{M}\subseteq \ominus U\oplus M$ for each subset $M$ of $G$.
\end{lemma}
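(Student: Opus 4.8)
The plan is to imitate the standard argument used for paratopological groups. Fix $x\in\overline{M}$; the goal is to produce $u\in U$ and $m\in M$ with $x=\ominus u\oplus m$. The key reduction is the elementary equivalence
\[
\ominus u\oplus m=x \iff u\oplus x=m ,
\]
which follows immediately from the left cancellation law (Theorem~\ref{the1.3}(3)): if $\ominus u\oplus m=x$ then $u\oplus x=u\oplus(\ominus u\oplus m)=m$, and conversely $\ominus u\oplus(u\oplus x)=x$ equals $\ominus u\oplus m$. Hence it suffices to find $u\in U$ with $u\oplus x\in M$, i.e. to show $(U\oplus x)\cap M\neq\emptyset$.

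Since $x=0\oplus x\in U\oplus x$, the set $U\oplus x$ is the image of the open neighbourhood $U$ of $0$ under the right translation $R_x\colon y\mapsto y\oplus x$. If $R_x$ is a homeomorphism of $G$ (in particular an open map), then $U\oplus x$ is an open neighbourhood of $x$; because $x\in\overline{M}$ it must meet $M$, and choosing $u\in U$, $m\in M$ with $u\oplus x=m$ gives $x=\ominus u\oplus m\in\ominus U\oplus M$, as required. So, once the behaviour of right translations is granted, the argument is just the gyrogroup bookkeeping encoded in Theorem~\ref{the1.3}(3).

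The step I expect to be the main obstacle is precisely the claim that $U\oplus x$ is a neighbourhood of $x$, equivalently that $R_x$ is open at $0$. For paratopological groups this is free because $R_x^{-1}=R_{x^{-1}}$ is again a continuous translation; for paratopological gyrogroups $R_x^{-1}$ is the co‑translation $z\mapsto z\boxminus x$, and its continuity is not a formality — this is where the gyrogroup structure (and whatever regularity of $\boxminus$, or of right translations, one has from Proposition~\ref{pro23} and its analogues) genuinely enters. A fallback worth trying, if that fact is not available in full strength, is to run the mirror argument with the left translation $L_x$, which is a homeomorphism by Proposition~\ref{pro23}: then $x\oplus U$ is an open neighbourhood of $x$, so there are $u'\in U$, $m'\in M$ with $x\oplus u'=m'$, whence $x=m'\boxminus u'$ by Theorem~\ref{the1.3}(5); the remaining task is to convert this right‑handed conclusion into the form $\ominus U\oplus M$ demanded in the statement, using the gyrosum‑inversion and cancellation identities of Theorem~\ref{the1.3}, and that conversion is where the real content lies.
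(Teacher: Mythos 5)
Your reduction is exactly the paper's: by left cancellation (Theorem~\ref{the1.3}(3)), $x\in\ominus U\oplus M$ if and only if $(U\oplus x)\cap M\neq\emptyset$, so the lemma amounts to saying that $U\oplus x$ is a neighbourhood of $x$ for every $x$. But at precisely this point your proposal stops: you state that the openness of $U\oplus x$ (equivalently, openness of the right translation $R_x$, equivalently continuity of the cotranslation $z\mapsto z\boxminus x$) is ``the main obstacle'' and give no argument for it, and your fallback is likewise left unfinished. As written, then, the proposal is conditional and does not prove the statement; the missing step is not a formality, since in a paratopological gyrogroup $R_x^{-1}(z)=z\boxminus x=z\ominus\text{gyr}[z,x]x$ involves the inversion and a gyration depending on $z$, neither of which is covered by Proposition~\ref{pro23}. (For what it is worth, the paper's own proof is just your first route with the crucial claim asserted outright: it declares $U\oplus x$ to be ``an open set containing $x$'' and concludes. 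So you have correctly located where all the content sits, but you have not supplied it, and neither proposal nor paper derives it from the stated axioms.)

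Your fallback route also genuinely stalls where you say it does. Using the left translation one gets $(x\oplus V)\cap M\neq\emptyset$, hence $x=m\boxminus v$ by Theorem~\ref{the1.3}(5), i.e.\ $x\in M\boxminus V$; even in the strongly paratopological case, where Lemma~\ref{lem17}-type arguments give $M\boxminus V\subseteq M\ominus V$, this is a product with the inverted set on the \emph{right}, and converting $M\ominus V$ into the required left-sided form $\ominus U\oplus M$ would need gyrocommutativity or continuity of the inverse --- exactly what is not available. So the fallback cannot rescue the argument for the lemma as stated, and the proof must either establish that right translations are open at $0$ (what the paper tacitly assumes) or restate the lemma in the right-sided form $\overline{M}\subseteq M\ominus V$ and adjust its later applications accordingly.
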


\begin{proof}
If $x\notin \ominus U\oplus M$, that is $(U\oplus x)\cap M=\emptyset$,
which implies there exists an open set $U\oplus x$ containing $x$ that has no intersection with $M$.
So, $x\notin \overline{M}$.
\end{proof}

\begin{lemma}\label{lem3.8}
Let the neighborhood base
$\mathscr{U}$ at 0 of $G$ witness that $G$ is a strongly paratopological gyrogroup
and not a topological gyrogroup.
Then there exists an open neighbourhood $U$ of the neutral element 0 in $\mathscr{U}$ such that $U\cap(\ominus U)$
is nowhere dense in $G$, that is, the interior of the closure of $U\cap(\ominus U)$ is empty.
\end{lemma}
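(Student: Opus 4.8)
The plan is to argue by contradiction, using the hypothesis that $G$ is not a topological gyrogroup only through the resulting discontinuity of inversion. Since $G$ is strongly paratopological with witnessing base $\mathscr{U}$, Lemma~\ref{pro3.1} shows that this discontinuity already occurs at $0$: there is $W_0\in\mathscr{U}$ with $\ominus V\nsubseteq W_0$ for every $V\in\mathscr{U}$; fix such a $W_0$. By Lemma~\ref{lem2.12} choose $U\in\mathscr{U}$ with $U\oplus(U\oplus U)\subseteq W_0$; then Lemma~\ref{lem18} (rebracketing) gives $(U\oplus U)\oplus U=U\oplus(U\oplus U)\subseteq W_0$, and Lemma~\ref{lem2.13s} gives $U\oplus U\in\mathscr{U}$. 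The claim is that this $U$ is as required, i.e.\ $U\cap(\ominus U)$ is nowhere dense.

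Suppose not, so that $\overline{S}$ contains a nonempty open set $O$, where $S:=U\cap(\ominus U)$. Being a nonempty open subset of $\overline{S}$, $O$ meets $S$; fix $a\in O\cap U\cap(\ominus U)$. By Proposition~\ref{pro23} the left translation $L_a$ is a homeomorphism whose inverse is $L_{\ominus a}$ (left cancellation, Theorem~\ref{the1.3}(3)), so $\ominus a\oplus O=L_a^{-1}(O)$ is an open neighbourhood of $0=\ominus a\oplus a$; pick $V\in\mathscr{U}$ with $V\subseteq\ominus a\oplus O$, so that $a\oplus V\subseteq O$. For any $v\in V$ we then have $a\oplus v\in O\subseteq\overline{S}\subseteq\overline{\ominus U}$, and Lemma~\ref{lem3.7} (with $M=\ominus U$ and the open neighbourhood $U$) yields $\overline{\ominus U}\subseteq(\ominus U)\oplus(\ominus U)$; hence $a\oplus v=\ominus u_1\oplus\ominus u_2$ for some $u_1,u_2\in U$.

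The heart of the argument is to recover $\ominus v$ from this. Writing $v=\ominus a\oplus(a\oplus v)=\ominus a\oplus(\ominus u_1\oplus\ominus u_2)$ and applying the Gyrosum Inversion law (Theorem~\ref{the1.3}(7)) first to this expression and then to $\ominus(\ominus u_1\oplus\ominus u_2)$, one obtains
\[
\ominus v=\text{gyr}[\ominus a,\,\ominus u_1\oplus\ominus u_2]\bigl(\text{gyr}[\ominus u_1,\ominus u_2](u_2\oplus u_1)\oplus a\bigr).
\]
Now the ``strongly'' hypothesis is decisive: since $u_2\oplus u_1\in U\oplus U\in\mathscr{U}$, Definition~\ref{defst} gives $\text{gyr}[\ominus u_1,\ominus u_2](u_2\oplus u_1)\in U\oplus U$; as $a\in U$ this puts $\text{gyr}[\ominus u_1,\ominus u_2](u_2\oplus u_1)\oplus a$ in $(U\oplus U)\oplus U\subseteq W_0$; and since $W_0\in\mathscr{U}$, a final application of Definition~\ref{defst} gives $\ominus v\in\text{gyr}[\ominus a,\,\ominus u_1\oplus\ominus u_2](W_0)=W_0$. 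As $v\in V$ was arbitrary, $\ominus V\subseteq W_0$ with $V\in\mathscr{U}$, contradicting the choice of $W_0$. Hence $\overline{S}$ has empty interior, i.e.\ $U\cap(\ominus U)$ is nowhere dense.

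I expect the main obstacle to be the gyro-algebraic bookkeeping in the displayed identity: one must apply Theorem~\ref{the1.3}(7) in the correct order and then check that \emph{every} set to which a gyroautomorphism is applied — here $U\oplus U$ and $W_0$ — belongs to $\mathscr{U}$, so that Definition~\ref{defst} is available at each step. The remaining ingredients (a nonempty open subset of a closure meets the set; left translations are homeomorphisms; the choice of $U$ via Lemma~\ref{lem2.12}) are routine, and no separation axiom is used anywhere.
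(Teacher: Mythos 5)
Your proof is correct, and it shares the paper's skeleton --- Lemma \ref{pro3.1} to localize the discontinuity of inversion at $0$, Lemma \ref{lem2.12} to pick $U$ with $U\oplus(U\oplus U)\subseteq W_0$, and Lemma \ref{lem3.7} applied to $\ominus U$ after extracting a nonempty open subset of $\overline{U\cap(\ominus U)}$ --- but your endgame is genuinely different. The paper fixes $W$ with $0\notin \text{int}(\ominus W)$, shows the hypothetical open set $V\subseteq\overline{U\cap(\ominus U)}$ satisfies $V\subseteq \ominus U\ominus U$, and then concludes $0\in V\ominus U\subseteq \text{int}(\ominus W)$; that last step requires $V\ominus U$ to be open, i.e.\ that right translates of an open set are open, which is not what Proposition \ref{pro23}(3) gives (there the open factor sits on the right) and is asserted in the paper without further justification. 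You avoid this entirely: instead of exhibiting an open subset of $\ominus W$, you show pointwise that $\ominus v\in W_0$ for every $v$ in a basic $V\subseteq \ominus a\oplus O$, contradicting the discontinuity at $0$ directly. Your displayed identity $\ominus v=\text{gyr}[\ominus a,\ominus u_1\oplus\ominus u_2]\bigl(\text{gyr}[\ominus u_1,\ominus u_2](u_2\oplus u_1)\oplus a\bigr)$ is a correct double application of Theorem \ref{the1.3}(7), the rebracketing $(U\oplus U)\oplus U=U\oplus(U\oplus U)\subseteq W_0$ is justified by Lemma \ref{lem18}, and each gyroautomorphism is applied to a gyr-invariant set ($U\oplus U$ via Lemma \ref{lem2.13s}, and $W_0\in\mathscr{U}$), so Definition \ref{defst} applies; passing from an arbitrary bad neighbourhood to a bad $W_0\in\mathscr{U}$ is legitimate since $\mathscr{U}$ is a base. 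In effect you redo by hand, for the specific bracketing $\ominus a\oplus(\ominus u_1\oplus\ominus u_2)$, what Lemma \ref{lem19} does at the level of sets; the cost is more gyro-algebraic bookkeeping, and the gain is that no openness of sets of the form $V\ominus U$ is ever needed, which makes your version the more robust of the two arguments.
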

\begin{proof}
Since $(G, \tau,\oplus)$ is not a topological gyrogroup,
the inverse operation in $G$ is discontinuous.
Therefore, it is discontinuous at 0 by Lemma \ref{pro3.1}, and we can choose an open neighbourhood $W$ of 0 such that $0\notin \text{int}(\ominus W)$.
Since operator $\oplus$ is continuous in $G$, we can find an open neighbourhood $U$ of 0 such that
$U\oplus (U\oplus U) \subseteq W$. That is $(\ominus U\ominus U)\ominus U\subseteq \ominus W$ by Lemma \ref{lem2.12}.
We claim that the set $U\cap(\ominus U)$ is nowhere dense in $G$.

Assume the contrary. Then there exists a non-empty open set $V$ in $G$ such that $V\subseteq \overline{U\cap(\ominus U)}$.
From Lemma \ref{lem3.7} it follows that $V\subseteq \overline{U\cap(\ominus U)}\subseteq \ominus U\oplus(U\cap(\ominus U))
\subseteq \ominus U\ominus U$. Then $V\ominus U\subseteq(\ominus U\ominus U)\ominus U\subseteq\ominus W$.
Clearly, $V\cap U\neq\emptyset$, and since $V$ is open the set $V\ominus U$ is open in $G$.
Therefore, $0\in V\ominus U\subseteq \text{int}(\ominus W)$, a contradiction.
\end{proof}

The next lemma easily follows from Lemma \ref{lem3.8}.

\begin{lemma}\label{lem3.8s}
Let the neighborhood base
$\mathscr{U}$ at 0 of $G$ witness that $G$ is a strongly paratopological gyrogroup such that $0\in \overline{\text{int}\overline{(\ominus U)}}$, for each $U\in \mathscr{U}$,
 Then $G$ is a strongly topological gyrogroup.
\end{lemma}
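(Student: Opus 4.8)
The plan is to argue by contradiction, essentially re-running the argument behind Lemma~\ref{lem3.8} but now exploiting the hypothesis from the other side. Suppose $G$ is not a strongly topological gyrogroup. By Lemma~\ref{pro3.1} the inversion $\ominus$ is then discontinuous at $0$; unwinding this, and replacing the offending neighbourhood by a smaller member of the base $\mathscr{U}$, we obtain $W\in\mathscr{U}$ with $0\notin\text{int}(\ominus W)$. This is the only place the failure of strong topologicalness enters, and producing $0\in\text{int}(\ominus W)$ at the end will be the sought contradiction.

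Next I would feed $W$ through the ``cube'' lemma and bring in the standing hypothesis. By Lemma~\ref{lem2.12} choose $U\in\mathscr{U}$ with $(\ominus U\ominus U)\ominus U\subseteq\ominus W$, and set $O:=\text{int}\,\overline{(\ominus U)}$, an open set; the hypothesis applied to this $U$ says exactly that $0\in\overline{O}$. The key containment comes from Lemma~\ref{lem3.7} (with $M=\ominus U$ and a suitable open neighbourhood of $0$ contained in $U$): it yields $\overline{(\ominus U)}\subseteq\ominus U\oplus(\ominus U)=\ominus U\ominus U$, whence $O\subseteq\ominus U\ominus U$ and therefore $O\ominus U\subseteq(\ominus U\ominus U)\ominus U\subseteq\ominus W$.

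Finally I would close the loop just as in the proof of Lemma~\ref{lem3.8}. The set $O\ominus U$ is open in $G$ (a right-translate of the open set $O$, handled exactly as the set $V\ominus U$ there; alternatively one may work with $\ominus p\oplus O=L_{\ominus p}(O)$, open by Proposition~\ref{pro23}(1), after a short re-association of $\ominus U\oplus(\ominus U\oplus\ominus U)$ using that each $U\in\mathscr{U}$ is fixed by every $\text{gyr}[x,y]$). Moreover $0\in O\ominus U$: since $0\in\overline{O}$ and $U$ is a neighbourhood of $0$, there is $p\in O\cap U$, and then $0=p\ominus p\in O\ominus U$. Hence $0\in O\ominus U\subseteq\ominus W$ with $O\ominus U$ open, i.e.\ $0\in\text{int}(\ominus W)$, contradicting the choice of $W$; so $G$ is a strongly topological gyrogroup. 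The essence is that the hypothesis $0\in\overline{\text{int}\,\overline{(\ominus U)}}$ is precisely what lets one place a point of the open set $O$ inside the neighbourhood $U$ and thereby re-run Lemma~\ref{lem3.8}; the only step I expect to require care is the routine bookkeeping about openness of translates, everything else being immediate from the lemmas already established.
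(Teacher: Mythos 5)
Your argument is correct, but it takes a different route from the paper's. The paper disposes of this lemma in one line, as the contrapositive of Lemma~\ref{lem3.8}: if $G$ were not a (strongly) topological gyrogroup, Lemma~\ref{lem3.8} would supply $U\in\mathscr{U}$ with $U\cap(\ominus U)$ nowhere dense, and then the hypothesis $0\in\overline{\mathrm{int}\,\overline{(\ominus U)}}$ gives a nonempty open set $U\cap\mathrm{int}\,\overline{(\ominus U)}\subseteq U\cap\overline{\ominus U}\subseteq\overline{U\cap(\ominus U)}$ (the last inclusion because $U$ is open), contradicting nowhere density. You instead bypass Lemma~\ref{lem3.8} and inline its internal machinery: the reduction to continuity of $\ominus$ at $0$ via Lemma~\ref{pro3.1}, the cube estimate from Lemma~\ref{lem2.12}, the inclusion $\overline{\ominus U}\subseteq\ominus U\ominus U$ from Lemma~\ref{lem3.7}, and the final translate trick, with the hypothesis used to place a point of $O=\mathrm{int}\,\overline{(\ominus U)}$ inside $U$. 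Both routes are valid; the paper's is shorter given Lemma~\ref{lem3.8}, while yours is self-contained modulo the earlier lemmas. A point in your favour: the paper's proof of Lemma~\ref{lem3.8} asserts without justification that $V\ominus U$ is open (openness of right translates is not among the established facts for paratopological gyrogroups), and your fallback via the left translate $\ominus p\oplus O$, together with the re-association $a\oplus(b\oplus c)=(a\oplus b)\oplus\mathrm{gyr}[a,b](c)$ and the gyr-invariance of $\ominus U$ (Theorem~\ref{the1.3}(8)), avoids that issue cleanly; note only that this re-association step is where you genuinely need $U$ to belong to the invariant base $\mathscr{U}$, which your appeal to Lemma~\ref{lem2.12} provides.
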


\begin{lemma}\label{lem4.4s}
Suppose that $G$ is a $T_1$-paratopological gyrocommutative gyrogroup which is not a topological gyrogroup.
Then, for each compact subset $F$ of $G$ such that $0\notin F$, there exist an open neighborhood
$O(F)$ of $F$ and an open neighborhood $O(0)$ of 0 such that $O(F)\cap(\ominus O(0))=\emptyset$.
\end{lemma}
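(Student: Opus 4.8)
The plan is to argue by contradiction, exactly in the spirit of the classical group-theoretic fact that in a semitopological group a compact set avoiding the identity can be separated from a neighborhood of the identity. So suppose the conclusion fails for some compact $F$ with $0\notin F$: for every open neighborhood $O(F)\supseteq F$ and every open neighborhood $O(0)$ of $0$ we have $O(F)\cap(\ominus O(0))\neq\emptyset$. Since $G$ is $T_1$, we may first separate $F$ from $0$ by an open set: there is an open $U\in\mathscr{U}$ with $0\in U$ and, using Proposition \ref{pro2.11} applied to the compact set $F$ and the open set $G\setminus\{0\}$ together with continuity of $\oplus$, we can also arrange that a suitable translate/combination involving $U$ still misses $F$. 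The idea is to produce, from the failure of separation, a point $x$ with $x\in\overline{F}=F$ that is forced to equal a limit of points of the form $\ominus u$ with $u\to 0$, which (because $G$ is gyrocommutative and $T_1$) would push $0$ into the closure of $\ominus$ of every neighborhood of $0$, and then invoke Lemma \ref{lem3.8s} to conclude that $G$ is a topological gyrogroup — contradicting the hypothesis.

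Concretely, first I would fix, using Lemma \ref{lem2.12}, a $W\in\mathscr{U}$ with $W\oplus(W\oplus W)\subseteq U$, and by Proposition \ref{pro2.11} shrink $W$ further so that $F\oplus W\subseteq G\setminus\{0\}$ and $W\oplus F\subseteq G\setminus\{0\}$; here one uses that $F$ is compact and $G\setminus\{0\}$ is open since $G$ is $T_1$. The hypothesis, applied to the open neighborhoods $O(F)=F\oplus W$ (open by Proposition \ref{pro23}(3)) and $O(0)=W$, yields for every such pair a point in $(F\oplus W)\cap(\ominus W)$; as $W$ ranges over a neighborhood base at $0$, a standard compactness/finite-intersection argument on the compact set $\overline{F\oplus W_0}$ (for a fixed $W_0$ with compact-closure availability replaced here by using that $F$ itself is compact and intersecting with the nested closed sets $\overline{(F\oplus W)\cap(\ominus W)}$) produces a point $z\in F$ lying in $\bigcap_{W}\overline{\ominus W}$. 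The key translation step is: $z\in F\oplus W$ for all small $W$ together with $z\in\ominus W$ for all small $W$ forces, after left-translating by $\ominus z$ and using Theorem \ref{the1.3}(3), that $0$ lies in $\overline{\mathrm{int}\,\overline{\ominus U}}$ for every $U\in\mathscr U$ — the gyrocommutative identities of Theorem \ref{the2.5com} are what let these coset computations go through cleanly. Then Lemma \ref{lem3.8s} gives that $G$ is a strongly (hence a) topological gyrogroup, the desired contradiction.

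The main obstacle will be the middle step: converting the finite-intersection data $(F\oplus W)\cap(\ominus W)\neq\emptyset$ into a single accumulation point that simultaneously "sees" $F$ from one side and $\ominus$-of-a-small-neighborhood from the other, and then turning that into the hypothesis $0\in\overline{\mathrm{int}\,\overline{\ominus U}}$ of Lemma \ref{lem3.8s}. Unlike the locally compact case, here compactness is only assumed for $F$, so one must be careful to run the finite-intersection argument inside $F$ (or a fixed compact set built from $F$) rather than inside a compact neighborhood of $0$; this is where I expect the bookkeeping with the nested neighborhoods $\{U_n\}$ from Lemma \ref{lem20}-style constructions, combined with Lemmas \ref{lem17}, \ref{lem18}, \ref{lem2.13s}, and the gyrocommutativity, to carry the weight. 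Once that reduction is in place, the contradiction with "$G$ is not a topological gyrogroup" is immediate from Lemma \ref{lem3.8s}.
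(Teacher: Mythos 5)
Your plan has a genuine gap, and in fact the whole strategy is aimed at the wrong lever. You argue by contradiction with the hypothesis ``$G$ is not a topological gyrogroup'' via Lemma \ref{lem3.8s}, but Lemma \ref{lem3.8s} (and all the auxiliary machinery you invoke: Lemmas \ref{lem17}, \ref{lem18}, \ref{lem2.12}, \ref{lem2.13s}, \ref{lem20}) is stated for a \emph{strongly} paratopological gyrogroup, i.e.\ it needs a base $\mathscr U$ at $0$ with $\text{gyr}[x,y](U)=U$ for all $x,y$; Lemma \ref{lem4.4s} assumes only a $T_1$ paratopological gyrocommutative gyrogroup, so none of these tools is available. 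Even ignoring that mismatch, the step you yourself flag as the ``main obstacle'' does not go through as sketched: from the points of $(F\oplus W)\cap(\ominus W)$ a compactness/finite-intersection argument can at best produce a cluster point $z\in F$ of the associated points $x_W\in F$, and left-translating by $\ominus z$ together with the gyroautomorphic inverse property only yields statements of the form $0\in\overline{\ominus(z\oplus W)}$, i.e.\ information about inverses of neighborhoods of the point $z$, not the hypothesis $0\in\overline{\text{int}\,\overline{\ominus U}}$ for every neighborhood $U$ of $0$ that Lemma \ref{lem3.8s} requires. So the contradiction is never reached, and no amount of bookkeeping with nested $U_n$'s repairs this, because the needed conclusion is about neighborhoods of $0$, not of $z$.

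The statement has a short direct proof that never uses ``not a topological gyrogroup'' at all. For each $x\in F$ we have $\ominus x\neq 0$, and since $G$ is $T_1$ the set $G\setminus\{\ominus x\}$ is an open neighborhood of $0$; continuity of $\oplus$ at $(0,0)$ gives $V_x$ with $\ominus x\notin V_x\oplus V_x$. Then $(V_x\oplus x)\cap(\ominus V_x)=\emptyset$: if $v_1\oplus x=\ominus v_2$ with $v_1,v_2\in V_x$, then by the gyroautomorphic inverse property (Theorem \ref{the2.5com}(1)) $\ominus v_1\ominus x=v_2$, and left cancellation (Theorem \ref{the1.3}(3)) gives $\ominus x=v_1\oplus v_2\in V_x\oplus V_x$, a contradiction. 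Now cover the compact set $F$ by finitely many sets $V_x\oplus x$, $x\in K$, and put $O(0)=\bigcap_{x\in K}V_x$ and $O(F)=\bigcup_{x\in K}V_x\oplus x$; any point of $O(F)\cap(\ominus O(0))$ would lie in some $(V_x\oplus x)\cap(\ominus V_x)$, which is impossible. This is exactly where the two hypotheses you underused ($T_1$ and gyrocommutativity) do their work, and it avoids the strong-base assumptions your route would need.
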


\begin{proof}
For each $x\in F$, we select an open neighborhood $V_x$ of 0 such that $\ominus x\notin V_x\oplus V_x$.
Then $(\ominus V_x\ominus x)\cap V_x=\emptyset$. We can get $V_x\oplus x=\ominus(\ominus V_x\ominus x)\cap (\ominus V_x)=\emptyset$ by Theorem \ref{the2.5com} (1).
Since $\gamma= \{V_x \oplus x: x\in F\}$ is a family of open sets in $G$ covering the
compact subspace $F$, there exists a finite subset $K$ of $F$ such that $F\subseteq \bigcup\{V_x \oplus x: x\in K\}$. Put
$O(0)=\bigcap\{V_x : x\in K\}$ and $O(F)=\bigcup\{V \oplus x: x\in K\}$. Then $O(0)$ is an open neighborhood
of 0, $O(F)$ is an open neighborhood of $F$, and $O(F)\cap(\ominus O(0))=\emptyset$.
\end{proof}

\begin{theorem}
Suppose that $f$ is a perfect homomorphism of a $T_1$-strongly paratopological gyrocommutative gyrogroup $G$
onto a strongly topological gyrogroup $H$. Then $G$ is also a strongly topological gyrogroup.
\end{theorem}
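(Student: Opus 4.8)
The plan is, via Lemma~\ref{pro3.1}, to reduce the theorem to the single statement that the inversion $\ominus\colon G\to G$ is continuous at the neutral element $0$: since $G$ is a strongly paratopological gyrogroup, once this continuity is known, Lemma~\ref{pro3.1} yields that $G$ is a strongly topological gyrogroup. So I would suppose, towards a contradiction, that $G$ is \emph{not} a topological gyrogroup. Then, as $G$ is $T_1$, gyrocommutative and paratopological, Lemma~\ref{lem4.4s} becomes available. Let $N=f^{-1}(0_H)$ be the kernel of $f$, where $0_H$ is the neutral element of $H$. Because $f$ is perfect, $N$ is a compact subset of $G$ and $f$ is a closed continuous map; because $f$ is a homomorphism, $f(\ominus x)=\ominus f(x)$ for every $x$, so $\ominus f^{-1}(A)=f^{-1}(\ominus A)$ for all $A\subseteq H$.

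Next I fix an arbitrary open neighbourhood $U$ of $0$ in $G$ and build an open neighbourhood $V$ of $0$ with $\ominus V\subseteq U$. The set $N\setminus U$ is compact and does not contain $0$, so Lemma~\ref{lem4.4s} supplies open sets $O(N\setminus U)\supseteq N\setminus U$ and $O(0)\ni 0$ with $O(N\setminus U)\cap(\ominus O(0))=\emptyset$, i.e.\ $\ominus O(0)\subseteq G\setminus O(N\setminus U)$. Put $\widetilde U=U\cup O(N\setminus U)$; this is open and, since $N=(N\cap U)\cup(N\setminus U)$, it contains all of $N$. As $f$ is closed, $f(G\setminus\widetilde U)$ is closed in $H$ and misses $0_H$, so $O_1:=H\setminus f(G\setminus\widetilde U)$ is an open neighbourhood of $0_H$ with $f^{-1}(O_1)\subseteq\widetilde U$. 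Using that $H$ is a topological gyrogroup, pick an open neighbourhood $O_2$ of $0_H$ with $\ominus O_2\subseteq O_1$, and set $V=f^{-1}(O_2)\cap O(0)$, an open neighbourhood of $0$ in $G$. Then $\ominus V=f^{-1}(\ominus O_2)\cap(\ominus O(0))\subseteq f^{-1}(O_1)\cap\bigl(G\setminus O(N\setminus U)\bigr)\subseteq\widetilde U\setminus O(N\setminus U)\subseteq U$. Hence $\ominus$ is continuous at $0$, contradicting the hypothesis, and the theorem follows by Lemma~\ref{pro3.1}.

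I do not expect this to meet serious resistance: perfectness is used only to make the kernel compact and $f$ closed, the identity $\ominus f^{-1}(A)=f^{-1}(\ominus A)$ is formal, $H$'s being a topological gyrogroup is used just to shrink $O_1$ to $O_2$, and gyrocommutativity enters \emph{only} through Lemma~\ref{lem4.4s}. The step requiring the most attention is the bookkeeping in the second paragraph — making sure that $\widetilde U$ genuinely contains every point of $N$ (this is what licenses the closed-map passage from $\widetilde U$ to $O_1$) and that intersecting $f^{-1}(O_2)$ with the neighbourhood $O(0)$ coming from Lemma~\ref{lem4.4s} still leaves a neighbourhood of $0$. When $N\subseteq U$ the construction degenerates (take $O(N\setminus U)=\emptyset$, $O(0)=G$) to the bare fact that a closed continuous homomorphic preimage of a topological gyrogroup has inversion continuous at $0$; the compact set $N\setminus U$ together with Lemma~\ref{lem4.4s} is precisely the device that handles the part of the kernel that protrudes from $U$. (One could instead try to show $G/N\cong H$ and invoke Theorem~\ref{the3.12}, but verifying that the compact $T_1$ subgyrogroup $N$ is a strongly topological gyrogroup is less immediate, so the direct route above seems preferable.)
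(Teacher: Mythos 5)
Your proposal is correct and follows essentially the same route as the paper's proof: both argue by contradiction so that Lemma~\ref{lem4.4s} applies to the compact set $f^{-1}(f(0))\setminus U$, then use closedness of $f$ to pull back a suitable neighbourhood of the identity of $H$ (shrunk using continuity of inversion in $H$) and intersect it with $O(0)$ to get $\ominus V\subseteq U$. The only cosmetic difference is that the paper first fixes a single ``bad'' neighbourhood $U$ with $0\notin\operatorname{int}(\ominus U)$ via Lemma~\ref{lem3.8s} and contradicts that, whereas you treat an arbitrary $U$ and conclude via Lemma~\ref{pro3.1}.
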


\begin{proof}
Assume that $G$ is not a strongly topological gyrogroup. and let the neighborhood base
$\mathscr{U}$ at 0 of $G$ witness that $G$ is a strongly paratopological gyrogroup.
Then, according to Lemma \ref{lem3.8s}, there
exists an open neighborhood $U\in \mathscr{U}$ such that 0 is not in $\text{int}(\ominus U)$.
Put $F = f^{-1}f(0)$ and $F_1= F\setminus U$. Since $F_1$ is compact and 0 is not in $F_1$, Lemma \ref{lem4.4s}
implies that there exist an open neighborhood $O(F_1)$ of $F_1$ and an open neighborhood
$O(0)$ of 0 such that $O(F_1)\cap(\ominus O(0))=\emptyset$.

Since $W=O(F_1)\cup U$ is an open neighborhood of $F$ and the mapping $f$ is closed, there
exists an open neighborhood $V$ of $f(0)$ in $H$ such that $f^{-1}(V)\subseteq W$. We can also assume
that $\ominus V=V$, since $H$ is a topological gyrogroup. Then $\ominus f^{-1}(V)=f^{-1}(\ominus V)=f^{-1}(V)\subseteq W$. Finally,
put $W_0=f^{-1}(V)\cap O(0)\cap U$. Clearly, $W_0$ is an open neighborhood of 0 contained in $U$.
We also have $\ominus W_0\subseteq \ominus f^{-1}(V)\subseteq W$
and $\ominus W_0\subseteq\ominus O(0)$.
Since $O(F_1)\cap(\ominus O(0))=\emptyset$,
it follows that $\ominus W_0\subseteq U$.
Therefore, $0\in W_0\subseteq \text{int}(\ominus U)$, a contradiction.
\end{proof}

Here, we demonstrate that every pseudocompact paratopological gyrogroup is a topological gyrogroup.
It is well known that a Tychonoff space $X$ is pseudocompact if and only if every locally finite family
of open sets in $X$ is finite.
To present results in a broad sense, we recall that a topological space
$X$ is called feebly compact if every locally finite family of open sets in $X$ is finite. Therefore,
'feebly compact' is equivalent to 'pseudocompact' for Tychonoff spaces.
This result is improved on A. V. Arhangel'ski\v{\i} and E. A. Reznichenko's results.

\begin{theorem}\label{the3.8}
If a strongly paratopological gyrogroup $G$ is a dense $G_\delta$-set in a regular feebly compact space $X$,
then $G$ is a strongly topological gyrogroup.
\end{theorem}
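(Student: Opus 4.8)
The plan is to follow the proof of Theorem~\ref{the3.6}, replacing the role of local compactness --- which there produced an accumulation point of a sequence of partial sums inside a compact set --- by an argument combining the feeble compactness of $X$ with the way $G$ sits inside $X$. Since a subspace of a regular space is regular, $G$ is regular; hence, by Lemma~\ref{pro3.1}, it suffices to prove that $\ominus$ is continuous at the identity $0$. Assume, for a contradiction, that it is not; then there is $U_0\in\mathscr U$ such that $\ominus V\not\subseteq U_0$ for every $V\in\mathscr U$.

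Using the continuity of $\oplus$, the regularity of $G$, and the fact that $\mathscr U$ is a neighbourhood base at $0$, build recursively a family $\{U_n:n\in\omega\}\subseteq\mathscr U$ with $\overline{U_{n+1}\oplus U_{n+1}}\subseteq U_n$ for all $n$, and choose $x_n\in U_n$ with $\ominus x_n\notin U_0$. As in the proof of Theorem~\ref{the3.6} (via Lemmas~\ref{lem17}, \ref{lem2.13s}, \ref{lem18}) the partial sums $y_k=x_1\oplus\cdots\oplus x_k$ all lie in $U_0$, and, as in the computation in the proof of Lemma~\ref{lem20}, $\ominus y_n\oplus y_m\in U_{n+1}\oplus\cdots\oplus U_m\subseteq U_n$ for all $m>n$, so that $\{y_m:m>n\}\subseteq O_n$, where $O_n:=y_n\oplus U_n$. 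Moreover, since $x_{n+1}\in U_{n+1}$, Lemma~\ref{lem18} gives $O_{n+1}\subseteq(y_n\oplus U_{n+1})\oplus U_{n+1}=y_n\oplus(U_{n+1}\oplus U_{n+1})$, whence, by Proposition~\ref{pro23}, $\overline{O_{n+1}}^{\,G}\subseteq y_n\oplus\overline{U_{n+1}\oplus U_{n+1}}^{\,G}\subseteq O_n$; thus $\{O_n:n\in\omega\}$ is a decreasing sequence of nonempty open subsets of $G$ trapping the tails of $\{y_k\}$.

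The heart of the matter is to extract from this data an accumulation point of $\{y_k\}$ lying in $G$: once we have such a $y$, Lemma~\ref{lem20} applies to $\{U_n\}$, $\{x_n\}$ and $y$ and produces some $k\in\omega$ with $\ominus x_{k+1}\in U_0$, contradicting $\ominus x_{k+1}\notin U_0$; hence $\ominus$ is continuous at $0$ and, by Lemma~\ref{pro3.1}, $G$ is a strongly topological gyrogroup. To find $y$ I would first record the auxiliary fact that a dense $G_\delta$-subspace of a regular feebly compact space is itself feebly compact: writing $G=\bigcap_n P_n$ with $P_n$ open in $X$ and, by regularity of $X$, $\overline{P_{n+1}}^{\,X}\subseteq P_n$, a countably infinite locally finite family of nonempty open subsets of $G$ can be shrunk, using regularity of $X$, to nonempty open subsets $W_n$ of $X$ with $\overline{W_n}^{\,X}\subseteq P_n$, and then $\{\text{int}_X\overline{W_n}^{\,X}:n\in\omega\}$ is an infinite locally finite family of nonempty open subsets of $X$ --- local finiteness at points of $X\setminus G$ coming from the $P_n$ and at points of $G$ from the density of $G$ in $X$ --- contradicting feeble compactness of $X$. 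Granting this, $G$ is feebly compact, so every decreasing sequence of nonempty open subsets of $G$ has nonempty intersection of closures; in particular $\bigcap_n\overline{O_n}^{\,G}\neq\emptyset$, and any point $y$ of this intersection lies in every $O_n$. It then remains to verify that such a $y$ is an accumulation point of the sequence $\{y_k\}$.

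The step I expect to be the main obstacle is precisely that last verification. Since $\ominus$ is a priori discontinuous, one cannot freely pass between ``$y\in y_n\oplus U_n$'' and ``$y_n\in y\oplus W$ for some small $W$'', so showing that the trapped point $y$ actually clusters the sequence $\{y_k\}$ requires care --- either through a more careful choice of the trapping sets $O_n$ (so that they shrink along $\mathscr U$ around each $y_n$ in a way compatible with passing to closures) or by building the trapping open sets directly inside the $P_n$ in $X$ and controlling them through the gyration-invariance of $\mathscr U$ in the style of Lemma~\ref{lem20}. Should that estimate prove awkward, an alternative line --- closer to the original argument of Arhangel'ski\v{\i} and Reznichenko --- is to argue instead from Lemma~\ref{lem3.8}: assuming $U_0\cap(\ominus U_0)$ nowhere dense, use that $G\setminus\overline{U_0\cap(\ominus U_0)}$ is dense open in $G$, together with feeble compactness of $X$ and the sets $P_n$, to build an infinite locally finite family of nonempty open subsets of $X$, a contradiction; by Lemma~\ref{lem3.8s} this again gives that $G$ is a strongly topological gyrogroup.
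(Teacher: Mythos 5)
Your main route breaks down at two places, one of which is fatal before the step you yourself flag as the obstacle. The auxiliary claim that a dense $G_\delta$-subspace of a regular feebly compact space is feebly compact is false: $\omega$ is a dense open (hence $G_\delta$) subspace of the compact Hausdorff space $\beta\omega$, yet $\omega$ is infinite discrete, so not feebly compact. Your sketched proof of the claim cannot be repaired along the lines given: since $G$ is dense, every open $P_n\supseteq G$ is dense in $X$, so one cannot arrange $\overline{P_{n+1}}^{\,X}\subseteq P_n$ unless $P_n=X$; and a point $x\in X\setminus G$ only misses \emph{one} $P_{n_0}$, which removes only one member $\overline{W_{n_0}}^{\,X}$ from consideration and gives no local finiteness of the whole family at $x$. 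The theorem is genuinely about feeble compactness of the ambient space $X$, not of $G$; the $G_\delta$ hypothesis is there only to force the limit set one extracts by feeble compactness of $X$ to land inside $G$. Second, even if $G$ were feebly compact, your adaptation of Theorem \ref{the3.6} stalls exactly where you predict: from $y\in\bigcap_n O_n=\bigcap_n(y_n\oplus U_n)$ one only gets $y_m\in y\boxminus U_m\subseteq y\ominus U_m$ (using strong invariance), and $\ominus U_m$ is precisely the kind of set over which you have no control while $\ominus$ is still presumed discontinuous; Lemma \ref{lem20} requires an accumulation point of the sequence $\{y_k\}$, not a point of $\bigcap_n O_n$, so no contradiction is reached. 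Thus the first plan does not constitute a proof.

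Your fallback remark does point at the paper's actual strategy (Lemma \ref{lem3.8} plus an Arhangel'ski\v{\i}--Reznichenko style construction carried out inside $X$), but as stated it has no content: nowhere density of $U\cap(\ominus U)$ cannot by itself be contradicted --- the Sorgenfrey line, a strongly paratopological gyrogroup with trivial gyrations, has exactly this property --- so all the work lies in combining it with feeble compactness of $X$ and the sets $M_n$ witnessing that $G$ is $G_\delta$. Concretely, the paper takes $W\in\mathscr U$ with $\overline{W\oplus W}\subseteq U$, puts $O=W\setminus\overline{U\cap(\ominus U)}$ (so $O$ is dense in $W$ and $\ominus O\cap U=\emptyset$), recursively builds open sets $U_{n+1}$ of $X$ and points $x_{n+1}\in U_{n+1}\cap G$ with $\overline{U_{n+1}}\subseteq U_n\cap M_n$ and $U_{n+1}\cap G\subseteq x_n\oplus O$, gets $\emptyset\neq F=\bigcap_n\overline{U_n}\subseteq G$ from feeble compactness of $X$, and then uses the regular closed (hence feebly compact) set $H=\overline{X\setminus\overline{F\oplus W}}$, disjoint from $F$, to find $k$ with $U_k\subseteq\overline{F\oplus W}$; this yields $x_k\in\overline{F\oplus W}\subseteq x_{k+1}\oplus\overline{W\oplus W}\subseteq x_{k+1}\oplus U\subseteq x_k\oplus(O\oplus U)$, hence $0\in O\oplus U$ and $\ominus O\cap U\neq\emptyset$, a contradiction. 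None of this construction appears in your outline, so the gap is the entire mechanism by which nowhere density and feeble compactness of $X$ interact.
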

\begin{proof}
Let the neighborhood base
$\mathscr{U}$ at 0 of $G$ witness that $G$ is a strongly paratopological gyrogroup. Assume the contrary. Then, by Lemma \ref{lem3.8}, there exists an open
neighbourhood $U$ of the neutral element 0 in $\mathscr{U}$ such that $U\cap(\ominus U)$ is nowhere dense.
Let $W\in\mathscr{U}$ such that $\overline{W\oplus W}\subseteq U$.
Put $O=W\setminus \overline{U\cap(\ominus U)}$. Then, clearly,
$O\subseteq W\subseteq \overline{O}$ and $\ominus O\cap U=\emptyset$.
First, we fix a sequence $\{M_n: n\in\omega\}$ of open sets in $X$ such that $G=\bigcap_{n=0}^\infty M_n$.
We are going to define a sequence $\{U_n: n\in\omega\}$ of open subsets of $X$ and a sequence
$\{x_n: n\in\omega\}$ of elements of $G$ such that $x_n\in U_n$, for each $n\in \omega$. Put $U_0=O$, and pick a
point $x_0 \in U_0\cap G$.

Assume now that, for some $n\in\omega$, an open subset $U_n$ of $X$ and a point $x_n\in U_n\cap G$
are already defined. Since $0\in W\subseteq\overline{O}$, we have $x_n\in x_n\oplus\overline{O}=\overline{x_n\oplus O}$.
Since $U_n$ is an open neighbourhood of $x_n$, it follows that $U_n \cap x_n\oplus O\neq\emptyset$. We take $x_{n+1}$  to be any point of $U_n\cap x_n\oplus O$. Note that $x_{n+1}\in G$, since $x_n\oplus O\subseteq G$.
Using the regularity of $X$, we can find an open neighbourhood $U_{n+1}$ of $x_{n+1}$ in $X$ such
that the closure of $U_{n+1}$ is contained in $U_n\cap M_n$, and $U_{n+1}\cap G\subseteq x_n\oplus O$.
The definition of the sets $U_n$ and points $x_n$, for each $n\in\omega$, is complete. Note that $\overline{U_i}\subseteq U_j$ whenever $j<i$. We also have $x_{n+1}\in x_n\oplus O$, for each $n\in\omega$.
Put $F=\bigcap_{n\in\omega}\overline{U_n}$. Clearly, $F\subseteq G$, and $F\neq\emptyset$ since $X$ is feebly compact. The set
$F\oplus W$ is an open neighbourhood of $F$ in $G$. Consider the closure $P$ of $F\oplus W$ in $X$, and let $H$
be the closure of $X\setminus P$ in $X$. Then $H$ is a regular closed subset of $X$, so that $H$ is feebly
compact.

We claim that $H\cap F=\emptyset$. Indeed, assume the contrary, and fix $x\in H\cap F$. Since
$F\oplus W$ is an open neighbourhood of $F$ in $G$, from $x\in F$ it follows that there exists an open
neighbourhood $V$ of $x$ in $X$ such that $V\cap G\subseteq F\oplus W$. Then the density of $G$ in $X$ implies
that $V\subseteq P$, while $x\in V\cap H$ implies that $V\setminus P\neq\emptyset$, which is a contradiction. Thus,
$H\cap F=\emptyset$.

Since $H$ is feebly compact, our definition of $F$ implies that $U_k\cap H=\emptyset$, for some
$k\in\omega$ (we use that $\overline{U_i}\subseteq U_j$ whenever $j<i$). Then $U_k\subseteq P$. Since $x_k\in U_k\cap G$, it follows
that $x_k\in \overline{F\oplus W}$. However, $F\subseteq U_{k+2}\cap G\subseteq x_{k+1}\oplus O\subseteq x_{k+1}\oplus W$. Hence,
$x_k\in \overline{F\oplus W}\subseteq \overline{x_{k+1}\oplus W\oplus W}=x_{k+1}\oplus \overline{(W\oplus W)}\subseteq x_{k+1}\oplus U$, by the definition of $\mathscr{U}$.
Taking into account that $x_{k+1}\in x_k\oplus O$, we obtain that $x_k\in (x_k\oplus O)\oplus U=x_k\oplus (O\oplus U)$. Hence, $0\in O\oplus U$ and
$\ominus O\cap U\neq\emptyset$, which is again a contradiction.
We prove that $G$ is a strongly topological gyrogroup.
\end{proof}

Naturally, Tychonoff spaces are mentioned in the following two corollaries of Theorem \ref{the3.8}.

\begin{corollary}\label{cor3.9}
 Every pseudocompact strongly paratopological gyrogroup is a topological gyrogroup.
 \end{corollary}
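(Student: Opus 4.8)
The plan is to deduce this immediately from Theorem \ref{the3.8} by taking the ambient space $X$ to be $G$ itself. Recall that in the conventions of this paper pseudocompactness is imposed only on Tychonoff spaces, and ``feebly compact'' coincides with ``pseudocompact'' in that class. Hence, if $G$ is a pseudocompact strongly paratopological gyrogroup, then $G$ is a Tychonoff space, so in particular it is regular, and it is feebly compact.

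The remaining point is that $G$ is a dense $G_\delta$-subset of a regular feebly compact space. For this one simply sets $X=G$: then $G$ is dense in $X$ (it equals $X$), and $G=\bigcap_{n\in\omega}G$ is trivially a $G_\delta$-subset of $X$, since $G$ is open in $X$. Thus all hypotheses of Theorem \ref{the3.8} are met with $X=G$, and Theorem \ref{the3.8} yields that $G$ is a strongly topological gyrogroup. By Definition \ref{defst} every strongly topological gyrogroup is in particular a topological gyrogroup, which gives the conclusion.

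There is essentially no obstacle here: the whole substantive content has already been carried out in Theorem \ref{the3.8}, and the corollary only requires recognizing that a pseudocompact space is feebly compact, regular, and (trivially) a dense $G_\delta$ in itself. The one thing worth flagging is the implicit dependence on the paper's convention that ``pseudocompact'' entails ``Tychonoff'' — this is what supplies the regularity needed to invoke Theorem \ref{the3.8} and is also the reason Tychonoff spaces are singled out in the statements of these corollaries.
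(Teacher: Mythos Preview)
Your proposal is correct and matches the paper's approach exactly: the paper states Corollary~\ref{cor3.9} as an immediate consequence of Theorem~\ref{the3.8}, relying precisely on the convention that ``pseudocompact'' entails ``Tychonoff'' (hence regular and feebly compact), with $X=G$ supplying the trivial dense $G_\delta$ embedding.
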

\begin{corollary}
 Every \v{C}ech-complete strongly paratopological gyrogroup is a topological gyrogroup.
 \end{corollary}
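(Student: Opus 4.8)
The plan is to deduce this from Theorem \ref{the3.8} by exhibiting a suitable ambient space. Recall that, by definition, a \v{C}ech-complete space is a Tychonoff space that occurs as a $G_\delta$-subset of its \v{C}ech--Stone compactification $\beta G$ (equivalently, of some, equivalently any, compact Hausdorff space in which it is densely embedded). So, given a \v{C}ech-complete strongly paratopological gyrogroup $G$, I would set $X=\beta G$. Since $G$ is Tychonoff it is dense in $X$; since $X$ is compact and Hausdorff it is regular, and being compact it is in particular feebly compact; and \v{C}ech-completeness of $G$ means precisely that $G$ is a $G_\delta$-set in $X$.

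With this setup, Theorem \ref{the3.8} applies directly: $G$ is a strongly paratopological gyrogroup which is a dense $G_\delta$-set in the regular feebly compact space $X=\beta G$, hence $G$ is a strongly topological gyrogroup. In particular, by Definition \ref{defst}, $G$ is a topological gyrogroup (its inversion $\ominus(\cdot)$ is continuous), which is the desired conclusion.

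There is no genuine obstacle here; the statement is a corollary in the strict sense. The only points worth a sentence of care are that a Hausdorff compactification is the right choice of ambient space (compact Hausdorff implies both regular and feebly compact), and that we use only the topological embedding $G\hookrightarrow\beta G$: no extension of $\oplus$, $\ominus$, or the gyroautomorphisms $\mathrm{gyr}[x,y]$ to $\beta G$ is required, since the hypotheses of Theorem \ref{the3.8} constrain only $G$ itself.
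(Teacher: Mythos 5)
Your proposal is correct and is exactly the argument the paper intends: the corollary is stated as an immediate consequence of Theorem \ref{the3.8}, obtained by taking $X=\beta G$ (or any Hausdorff compactification), which is regular and feebly compact, with $G$ a dense $G_\delta$-subset by \v{C}ech-completeness. Your added remark that no extension of the gyrogroup operations to $\beta G$ is needed is accurate and matches the paper's use of the theorem.
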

 Because countably compact spaces are feebly compact, Theorem \ref{the3.8} implies the following fact.
\begin{corollary}\label{cor4.9}
 Every regular countably compact strongly paratopological gyrogroup is a
topological gyrogroup.
 \end{corollary}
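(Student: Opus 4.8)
The plan is to obtain this statement as an immediate consequence of Theorem \ref{the3.8}, taking the ambient space $X$ to be $G$ itself. Indeed, $G$ is a dense $G_\delta$-subset of $X=G$ in the trivial way: $G$ is the whole space, hence dense in $X$, and the equality $G=\bigcap_{n\in\omega}G$ exhibits $G$ as a $G_\delta$-set in $X$. By hypothesis $G$ is regular, so the only point that needs to be verified is that $G$, being countably compact, is feebly compact. Once this is in place, Theorem \ref{the3.8} applies verbatim and yields that $G$ is a strongly topological gyrogroup; in particular the inverse operation is continuous, so $G$ is a topological gyrogroup.

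Thus the one substantive step is the classical implication \emph{countably compact $\Rightarrow$ feebly compact}, which I would justify as follows. Let $\gamma=\{V_n:n\in\omega\}$ be a locally finite family of nonempty open subsets of $G$ and suppose, for a contradiction, that $\gamma$ is infinite; after passing to a subfamily we may assume the $V_n$ are pairwise distinct. Local finiteness forces each point of $G$ to lie in only finitely many members of $\gamma$, so a greedy construction produces, for infinitely many indices $n$, a point $x_n\in V_n$ with all the chosen points pairwise distinct (at each stage all but finitely many $V_n$ avoid the finitely many points already selected). Local finiteness also says that every $x\in G$ has a neighbourhood meeting only finitely many $V_n$, hence containing only finitely many of the chosen points; thus the infinite set $S$ of chosen points has no accumulation point in $G$, contradicting countable compactness. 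Therefore every locally finite family of open sets in $G$ is finite, i.e., $G$ is feebly compact.

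There is essentially no obstacle here: the corollary is a repackaging of Theorem \ref{the3.8}, and the only real content is the standard fact that countable compactness implies feeble compactness. The sole point requiring care is the bookkeeping in the thinning-out argument above — ensuring the points $x_n$ are genuinely distinct and that local finiteness is invoked correctly — after which the conclusion follows at once.
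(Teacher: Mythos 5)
Your proposal is correct and matches the paper's route exactly: the paper also obtains this corollary by noting that countably compact spaces are feebly compact and then applying Theorem \ref{the3.8} with $X=G$ (where $G$ is trivially a dense $G_\delta$ in itself). Your filled-in argument for countably compact $\Rightarrow$ feebly compact is sound, since the neighbourhoods you produce contain only finitely many of the chosen points, so no point is even an $\omega$-accumulation point of them, which is the form of countable compactness valid without separation axioms.
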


\begin{definition}
A paratopological gyrogroup $G$ is called topologically periodic if for each
$x\in G$ and every neighborhood $U$ of the identity there exists an integer $n$ such that
$n\cdot x\in U$.
\end{definition}

\begin{theorem}\label{the4.2}
If a strongly paratopological gyrogroup $G$ is Hausdorff countably compact and topologically periodic,
then $G$ is a strongly topological gyrogroup.
\end{theorem}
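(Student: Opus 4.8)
The plan is to deduce the theorem from the machinery already in place. Since countably compact spaces are feebly compact, a regular such $G$ would already be a topological gyrogroup by Corollary \ref{cor4.9}, so the real content lies in the non-regular Hausdorff case, and this is precisely where topological periodicity must be spent. By Lemma \ref{pro3.1} it suffices to prove that $\ominus$ is continuous at $0$, and I would argue by contradiction.

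Assume $G$ is not a strongly topological gyrogroup, with $\mathscr{U}$ witnessing that it is strongly paratopological. By Lemma \ref{lem3.8} fix $U\in\mathscr{U}$ with $U\cap(\ominus U)$ nowhere dense, and, imitating the proof of Theorem \ref{the3.8}, take $W\in\mathscr{U}$ with $\overline{W\oplus W}\subseteq U$ and put $O=W\setminus\overline{U\cap(\ominus U)}$; then $O$ is open, $O\subseteq W\subseteq U$, $0\in W\subseteq\overline{O}$, and $(\ominus O)\cap U=\emptyset$, and the goal is to contradict this last statement. Fix also a decreasing chain $U=U_0\supseteq U_1\supseteq\cdots$ in $\mathscr{U}$ with $U_{n+1}\oplus U_{n+1}\subseteq U_n$; by Lemmas \ref{lem18}, \ref{lem2.13s} and \ref{lem2.12} a telescoping estimate gives $U_1\oplus\cdots\oplus U_k\subseteq U_0$ for every $k$, so that whenever $x_n\in U_n$ the partial sums $y_k=x_1\oplus\cdots\oplus x_k$ all lie in $U=U_0$, and $\overline{U}$, being closed in $G$, is countably compact.

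The crux is to choose the increments $\{x_n\}$ with $x_n\in U_n$ and $x_{n+1}\in x_n\oplus O$ so that an accumulation point $y$ of $\{y_k\}$ in $\overline{U}$ (supplied by countable compactness) forces $\ominus x_{k+1}\in U$ for some $k$; since $x_{k+1}\in x_k\oplus O$, unwinding $\ominus x_{k+1}\in\ominus(x_k\oplus O)$ by the gyrosum inversion Theorem \ref{the1.3} (7) and the $\text{gyr}$-invariance of the members of $\mathscr{U}$ then places a point of $\ominus O$ inside $U$, a contradiction. In the locally compact case (Theorem \ref{the3.6}) the corresponding computation, Lemma \ref{lem20}, needed the strictly stronger chain condition $\overline{U_{n+1}\oplus U_{n+1}}\subseteq U_n$, which was available only because local compactness forces regularity. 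Here no closed neighbourhood base is at hand, and its substitute is topological periodicity: for every $x\in G$ the subgyrogroup $\langle x\rangle$ is a group on which each $\text{gyr}[m\cdot x,n\cdot x]$ is trivial, so the powers $n\cdot x$ obey the usual laws of exponents and pass freely through members of $\mathscr{U}$; a pigeonhole argument along the chain $\{U_n\}$ shows that $\{n\in\mathbb{N}:n\cdot x\in V\}$ is infinite for every neighbourhood $V$ of $0$, whence $0$ is a cluster point of $\{n\cdot x:n\ge1\}$ and, applying the left-translation homeomorphism $L_x$ of Proposition \ref{pro23}, also $\ominus x\in\overline{\{n\cdot x:n\ge1\}}$. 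Using these facts to pick the $x_n$ as suitable powers of small elements keeps them in $U_n$ while making their inverses recoverable from the accumulation point, so that the argument of Lemma \ref{lem20} runs with accumulation points in place of closed neighbourhoods.

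I expect the main obstacle to be exactly this organization of the recursion: arranging $x_n\in U_n$, $x_{n+1}\in x_n\oplus O$, $y_k\in U$ for all $k$, and recoverability of $\ominus x_{k+1}$ from $y$, simultaneously and without a closed neighbourhood base, with the torsion-like behaviour of powers standing in for regularity. If this direct route turns out to be too intricate, an alternative is to prove first that a Hausdorff countably compact topologically periodic strongly paratopological gyrogroup is regular, and then simply invoke Corollary \ref{cor4.9}.
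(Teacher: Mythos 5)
Your proposal is a plan rather than a proof: the one step on which everything hinges --- how topological periodicity replaces the closed chain condition $\overline{U_{n+1}\oplus U_{n+1}}\subseteq U_n$ that Lemma \ref{lem20} needs --- is never carried out, and you say so yourself (``I expect the main obstacle to be exactly this organization of the recursion''). The accumulation-point computation in Lemma \ref{lem20} genuinely uses that closures of the $U_{n}$ nest inside earlier members (that is how the cluster point $z$ is trapped in $\overline{U_{k+1}}\subseteq U_k$), and in a Hausdorff countably compact space that is not known to be regular you have no such base; asserting that ``picking the $x_n$ as suitable powers of small elements'' will make the inverses ``recoverable from the accumulation point'' is exactly the missing argument, not a sketch of it. The auxiliary claims you lean on are also unsubstantiated: topological periodicity only gives one $n$ with $n\cdot x\in V$ per neighbourhood, and the statement that $0$ is a cluster point of $\{n\cdot x\}$, or that $\ominus x\in\overline{\{n\cdot x:n\ge 1\}}$, is never proved and is not needed in any case. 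The fallback (``prove regularity first, then quote Corollary \ref{cor4.9}'') is likewise just deferred work: regularity of such a $G$ is essentially equivalent to the conclusion and is not established.

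The paper's argument is much more direct and does not go through Lemma \ref{lem3.8}, the set $O=W\setminus\overline{U\cap(\ominus U)}$, or accumulation points of partial sums at all. Given $U\in\mathscr{U}$, take $V_i\in\mathscr{U}$ with $V_0=U$ and $V_{i+1}\oplus V_{i+1}\subseteq V_i$. Lemma \ref{lem2.13} gives $\ominus(\overline{\ominus V_{i+1}})\subseteq V_i$, so any $x\in F=\bigcap_i\overline{\ominus V_i}$ has $\ominus x\in V_i$ for every $i$; topological periodicity supplies $n$ with $n\cdot x\in V_1$, and since $(n-1)\cdot(\ominus x)\in V_{i_0}\oplus\cdots\oplus V_{i_0}\subseteq V_1$ for $i_0$ deep enough, the identity $x=n\cdot x\oplus\bigl((n-1)\cdot(\ominus x)\bigr)$ puts $x\in V_1\oplus V_1\subseteq U$. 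Thus $F\subseteq U$, and countable compactness (applied to the closed sets $\overline{\ominus V_i}\setminus U$) yields a single $i$ with $\overline{\ominus V_i}\subseteq U$, i.e.\ continuity of $\ominus$ at $0$, after which Lemma \ref{pro3.1} finishes. This is where periodicity is actually spent --- in the pointwise algebraic identity above --- not in organizing a recursion of partial sums; if you want to salvage your route you must either supply the missing recursion in full or prove the regularity claim, and at present neither is done.
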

\begin{proof}
Let the neighborhood base
$\mathscr{U}$ at 0 of $G$ witness that $G$ is a strongly paratopological gyrogroup,
$U\in\mathscr{U}$ and $\{V_i
|i\in\omega\}$ a family
of neighborhoods of the identity 0 such that $V_i\in\mathscr{U}$, $V_0=U$ and $V_{i+1}\oplus V_{i+1}\subseteq V_i$
for each $i\in\omega$. By lemma \ref{lem2.13} we have $\ominus(\overline{\ominus V_{i+1}})\subseteq V_i$

We show that $F=\cap\{\overline{\ominus V_i}|i\in\omega\}\subseteq U$.
Let $x\in F$, $x\in \overline{\ominus V_i}$ for each $i\in\omega$.
Then $\ominus x\in\ominus(\overline{\ominus V_{i+1}})$ for each $i\in\omega$.
 Choose $n\in \omega$ such that $n\cdot x\in V_1$, then
$(n-1)\cdot(\ominus x)\in \ominus(\overline{\ominus V_{i+1}})\oplus \dots \ominus(\overline{\ominus V_{i+1}})
\subseteq V_i\oplus\dots \oplus V_i$ for each $i\in\omega$.
 Now choose $i_0\in \omega$ such that $V_{i_0}\oplus\dots \oplus V_{i_0}\subset V_1$.
 Then $x=n\cdot x\oplus((n-1)\cdot(\ominus x))\in V_1\oplus V_1\subseteq U$.
 Therefore $F\subset U$. Since $G$ is a
countably compact group, there exist $i_1, \dots, i_k$ such that
$\cap\{\overline{\ominus V_{i_j}}|j=1, \dots, k\}\subseteq U$.
Thus we have proved that the inverse operation $\ominus$ is continuous at 0.
Hence $G$ is a strongly topological gyrogroup by Lemma \ref{pro3.1}.
\end{proof}

\vskip0.9cm

\end{document}